\numberwithin{equation}{section}
\newcommand{\beq}{\begin{equation}}
\newcommand{\eeq}{\end{equation}}
\newcommand{\bea}{\begin{eqnarray}}
\newcommand{\eea}{\end{eqnarray}}
\newcommand{\beas}{\begin{eqnarray*}}
\newcommand{\eeas}{\end{eqnarray*}}
\newcommand{\ds}{\displaystyle}
\newtheorem{theorem}{Theorem}[section]
\newtheorem{definition}[theorem]{Definition}
\newtheorem{proposition}[theorem]{Proposition}
\newtheorem{corollary}[theorem]{Corollary}
\newtheorem{lemma}[theorem]{Lemma}
\newtheorem{remark}[theorem]{Remark}
\newtheorem{example}[theorem]{Example}
\newtheorem{examples}[theorem]{Examples}
\newtheorem{foo}[theorem]{Remarks}
\newcommand{\ee}{\ell}
\newcommand{\bM}{\mathbb M}
\newcommand{\Ho}{\mathcal H}
\newcommand{\V}{\mathcal V}
\newcommand{\M}{\mathbb M}
\newcommand{\R}{\mathbb R}
\newcommand{\ve}{\varepsilon}
\newcommand{\ch}{\mathcal H}
\newcommand{\ep}{\varepsilon}
\newcommand{\Dh}{{\Delta}_{\mathcal{H}}}
\newcommand{\Dv}{{\Delta}_{\mathcal{V}}}
\title{Sub-Laplacian comparison theorems on totally geodesic Riemannian foliations}
\author[1]{Fabrice Baudoin
}
\author[2]{Erlend Grong}
\author[3]{Kazumasa Kuwada}
\author[4]{Anton Thalmaier}
\affil[1]{Department of Mathematics, University of Connecticut,\par
   341 Mansfield Road, Storrs, CT  06269-1009, USA\par
   \texttt{fabrice.baudoin@uconn.edu}\vspace{1em}}
\affil[2]{Universit\'e Paris-Sud, LSS-SUP\'ELEC, \par
3, rue Joliot-Curie, 91192 Gif-sur-Yvette, France, and \par
   University of Bergen, Department of Mathematics, \par P.O. Box 7803, 5020 Bergen, Norway\par
   \texttt{erlend.grong@math.uib.no}\vspace{1em}}
\affil[3]{Mathematical Institute, Graduate School of Science,\par
   Tohoku University, 980-8578, Sendai, Japan\par
   \texttt{kuwada@m.tohoku.ac.jp}\vspace{1em}}
\affil[4]{Mathematics Research Unit, University of Luxembourg,\par 
   L-4364 Esch-sur-Alzette, Luxembourg\par 
   \texttt{anton.thalmaier@uni.lu}}
\begin{document}

\maketitle

\newcommand\blfootnote[1]{%
  \begingroup
  \renewcommand\thefootnote{}\footnote{#1}%
  \addtocounter{footnote}{-1}%
  \endgroup
}
\blfootnote{First author supported in part by NSF Grant DMS 1660031.}
\blfootnote{The second author supported by project 249980/F20 of the Norwegian Research Council.}
\blfootnote{The third author supported by JSPS Grant-in-Aid for Young Scientist (KAKENHI) 26707004.}
\blfootnote{Fourth author supported by FNR Luxembourg: OPEN scheme
(project GEOMREV O14/7628746).}

\begin{abstract}
We develop a variational theory of geodesics for the canonical variation of the metric of a totally geodesic foliation. As a consequence, we obtain  comparison theorems for the horizontal and vertical Laplacians. In the case of Sasakian foliations, we show that sharp horizontal and vertical Laplacian  comparison theorems for the sub-Riemannian  distance may  be obtained as a limit of horizontal and vertical Laplacian comparison theorems for the Riemannian distances  approximations. As a corollary we prove that, under suitable curvature conditions,  sub-Riemannian Sasakian spaces are actually limits  of Riemannian spaces satisfying  a uniform measure contraction property.
\end{abstract}

\

\tableofcontents

\section{Introduction}

In the last few years there has been major progress in understanding curvature type invariants in sub-Riemannian geometry and their applications to partial differential equations. In that topic, one can distinguish two main lines of research:
\begin{itemize}
\item A Lagrangian approach to curvature which is based on second variation formulas for sub-Riemannian geodesics and an intrinsic theory of sub-Riemannian Jacobi fields. We refer to \cite{AZ1,AZ2} and to the recent memoir \cite{ABR2} and its bibliography for this theory.
\item An Eulerian approach to curvature which is based on Bochner type inequalities for the sub-Laplacian as initiated in \cite{BG} (see also \cite{GT1,GT2}).
\end{itemize}
The two methods have their own advantages and inconveniences.
The first approach is more intrinsic and yields curvature invariants from the sub-Riemannian structure only.  Though it gives a deep understanding of the geodesics and, in principle, provides a  general framework, it is somehow challenging to compute and to make use of those invariants, even in simple examples like Sasakian spaces (see \cite{ABR,LL,LLZ}). The second approach is more extrinsic and produces curvature quantities from the sub-Riemannian structure together with the choice of a \textit{natural} complement to the horizontal distribution. Actually, the main idea in \cite{BG} is to embed the sub-Riemannian structure into a family or Riemannian structures converging  to the sub-Riemannian one. Sub-Riemannian curvature invariants appear then as the tensors controlling, in a certain sense, this convergence. Since it requires the existence of a good complement allowing the embedding, this approach is a priori  less general but it has the advantage to make available the full power of Riemannian tensorial methods to large classes of sub-Riemannian structures and is more suited to the study of subelliptic PDEs and their connections to the geometry of the ambient space (see for instance \cite{Bau} for a survey).

\

In the present paper, we aim at filling a gap between those two approaches by studying the variational theory of the geodesics of the Riemannian approximations in the setting of totally geodesic foliations.  Our framework is the following. Let $(\M,g,\mathcal{F})$ be a totally geodesic Riemannian foliation on a manifold $\M$ with horizontal bracket generating distribution $\Ho$. The sub-Riemannian structure we are interested in is $(\M, \Ho, g_{\Ho})$ where $g_\Ho$ is the restriction of $g$ to $\Ho$. It can be approximated by the family of Riemannian manifolds $(\M,g_\ve)$ obtained by blowing up the metric $g$ in the direction of the leaves (see formula \ref{cv}). A natural sub-Laplacian for  $(\M, \Ho, g_{\Ho})$ is the horizontal Laplacian $\Delta_\Ho$ of the foliation. Our main interest is then in uniform  Hessian and sub-Laplacian comparison theorems for the Riemannian distances approximations of the sub-Riemannian distance. Namely, we wish to estimate $\Delta_\Ho r_\ve$ everywhere it is defined, where $r_\ve$ denotes the distance from a fixed point for the distance associated to $g_\ve$ and deduce a possible limit comparison theorem for $\Delta_\Ho r_0$, where $r_0$ denotes now the sub-Riemannian distance.  Obviously, relevant estimates may not be obtained by standard Riemannian comparison geometry based on Ricci curvature. Indeed, the basic idea in classical comparison theory is to compare the geometry of the manifold to the geometry of model spaces which are isotropic in the sense that all directions are the same for the energy cost of geodesics. In our setting, when $\ve \to 0$ the horizontal directions are preferred and geodesics actually do converge to horizontal curves. Quantitatively, when $\ve \to 0$ the Riemannian Ricci curvature of the metric $g_\ve$  diverges to $-\infty$ in the horizontal directions and $+\infty$ in the vertical directions. To obtain relevant uniform estimates for $\Delta_\Ho r_\ve$, it is therefore more natural to develop a comparison geometry with respect to \textit{model foliations}. In all generality, the classification of such model foliations is a difficult task. However, when the foliation is of Sasakian type  it becomes possible to develop a sectional curvature comparison theory with respect to the models:
\begin{itemize}
\item The Heisenberg group as a flat model;
\item The Hopf fibration $\mathbb{S}^1 \to \mathbb{S}^{2n+1} \to \mathbb{CP}^n$ as a positively curved model;
\item The universal cover of the anti de-Sitter fibration $\mathbb{S}^1 \to \mathbf{AdS}_{2n+1} \to \mathbb{CH}^n$ as a negatively curved model.
\end{itemize}
 This point of view will allow us to prove a horizontal Hessian comparison theorem, as well as a  uniform sub-Laplacian comparison theorem for $\Delta_\Ho r_\ve$ that actually has a limit when $\ve \to 0$ (see Theorem \ref{th:SasakianComp3}). For instance, we obtain that for non-negatively curved Sasakian foliations (in the sense of Theorem \ref{th:SasakianComp3}), one has:
\[
\Delta_\Ho r_0 \le \frac{n+2}{r_0}
\]
where $n$ is the dimension of the horizontal distribution. In view of the known results by Agrachev \& Lee in dimension 3 (see \cite{AL1,AL2}), the constant $n+2$ is sharp.

\

The paper is organized as follows. In Section~\ref{sec:RFoliation}, we work in any totally geodesic foliations and compute the second variation formula of Riemannian $g_\ve$-geodesics with respect to variations in horizontal directions only. As a consequence we deduce a first  family of sub-Laplacian comparison theorems under weak and general conditions (see Theorem~\ref{comparison 1}). We deduce several consequences of those estimates, like a sharp injectivity radius estimate (Corollary \ref{injectivity}) or a Bonnet-Myers type theorem (Corollary \ref{BMyers}). It is remarkable, but maybe unsurprising, that the tensors controlling the trace of the horizontal index form are the same tensors that appear in the Weitzenb\"ock formula (see \cite{BKW,GT3}) for the  sub-Laplacian. In fact, we will prove in Section~\ref{ssec:HorVerComp} that this family of sub-Laplacians comparison theorems may actually also be proved by using the generalized curvature dimension inequalities introduced in \cite{BG,GT1,GT2}. Though the generalized curvature dimension inequality implies many expected byproducts of a sub-Laplacian comparison theorem like uniform volume doubling properties for the sub-Riemannian balls (see \cite{BBG}), there is no limit in Theorem \ref{comparison 1} when $\ve \to 0$. It seems that stronger geometric conditions are needed to prove a uniform family of sub-Laplacian comparison theorems that has a limit when $\ve \to 0$. To the best of our knowledge, it is therefore still an open question to decide whether the sub-Riemannian curvature dimension inequality alone implies or not a measure contraction property of the underlying metric measure space.

\

In Section~\ref{sec:HTypeComp}, we work in the framework of Sasakian foliations  and prove under suitable conditions a uniform family of horizontal Hessian and sub-Laplacian comparison theorems. It should come as no surprise that for the sub-Laplacian comparison theorem, the assumptions are stronger than in Section~\ref{sec:RFoliation}. The main theorem is Theorem~\ref{th:SasakianComp3}. It is proved as a consequence of a uniform family of Hessian comparison theorems (Theorem \ref{Hessian general3}). The idea behind the proof of Theorem \ref{Hessian general3} is pointed out above: we develop a comparison geometry with respect to Sasakian model spaces of constant curvature. In those Sasakian model spaces Jacobi fields can be computed explicitly (see Appendix 2). We point out that the computation of Jacobi fields in those model spaces is not straightforward, and to the best of our knowledge is new in this form. The novelty in our computations is that we work with a family of connections first introduced in \cite{Bau}. These connections are natural generalizations of the Levi-Civita connection and  are suited to the setting of Riemannian foliations with totally geodesic leaves. Though the connections are not torsion free, their adjoints are metric, and it is therefore easy to develop the formalism of Jacobi fields in this framework (see Appendix 1). In the final part of the paper, we explore then some consequences of the sub-Laplacian comparison theorems in terms of measure contraction properties. In particular, in the non-negatively curved case we obtain the interesting fact that the family of Riemannian manifolds $(\M,g_\ve)$, $\ve >0$, uniformly satisfies the measure contraction properties $\mathbf{MCP}(0,n+4)$ despite the fact that when $\ve \to 0$ the Riemannian Ricci curvature of the metric $g_\ve$  diverges to $-\infty$ in the horizontal directions and $+\infty$ in the vertical directions. We also obtain sharp sub-Riemannian type Bonnet-Myers theorems (see Theorem \ref{BMsubriem}).

\

\textbf{Acknowledgments:} \textit{The first author would like to thank Nicola Garofalo for stimulating discussions on methods related to Section~\ref{ssec:HorVerComp}.}

\section{Horizontal and vertical Laplacian comparison theorems on Riemannian foliations} \label{sec:RFoliation}

\subsection{Framework} \label{ssec:foliation}

Throughout the paper, we consider a smooth connected $n+m$ dimensional manifold $\M$ which is equipped with a Riemannian foliation with a bundle like  metric $g$ and totally geodesic $m$ dimensional leaves. We moreover always  assume that the metric $g$ is complete and that the horizontal distribution $\mathcal{H}$ of the foliation is bracket-generating.  We denote by $\mu$ the Riemannian reference volume  measure on $\M$.

\

As is  usual, the sub-bundle $\mathcal{V}$ formed by vectors tangent to the leaves is referred  to as the set of \emph{vertical directions}. The sub-bundle $\mathcal{H}$ which is normal to $\mathcal{V}$ is referred to as the set of \emph{horizontal directions}.   Saying that the foliation is totally geodesic and Riemannian means that:
\begin{equation} \label{RTG} (\mathcal{L}_X g)(Z,Z)=0, \qquad (\mathcal{L}_Z g)(X, X)=0, \qquad \text{for any $X \in \Gamma^\infty(\Ho)$, $Z \in \Gamma^\infty(\V)$.}\end{equation}
The literature on Riemannian foliations is vast, we refer for instance  to the classical reference  \cite{Tondeur} and its bibliography for further  details.

\

The Riemannian gradient will be denoted $\nabla$ and we write the horizontal gradient as $\nabla_\Ho$, which is the projection of $\nabla$ onto $\mathcal{H}$. Likewise, $\nabla_\mathcal{V}$ will denote the vertical gradient.  The horizontal Laplacian $\Delta_\Ho$ is the generator of the symmetric closable bilinear form:
\[
\mathcal{E}_{\mathcal{H}} (f,g) 
=-\int_\bM \langle \nabla_\mathcal{H} f , \nabla_\mathcal{H} g \rangle_{\mathcal{H}} \,d\mu, 
\quad f,g \in C_0^\infty(\M).
\]
The vertical Laplacian may be defined as $\Delta_\V=\Delta -\Delta_\Ho$ where $\Delta$ is the Laplace-Beltrami operator on $\M$. We have
\[
\mathcal{E}_{\mathcal{V}} (f,g) :=-\int_\bM \langle \nabla_\mathcal{V} f , \nabla_\mathcal{V} g \rangle_{\mathcal{V}} \,d\mu=\int_\M f \Delta_\V g \,d\mu, \quad f,g \in C_0^\infty(\M).
\]

The hypothesis that $\mathcal{H}$ is bracket generating implies that the horizontal Laplacian $\Delta_{\mathcal{H}}$ is locally subelliptic and the completeness assumption on $g$ implies that $\Delta_{\mathcal{H}}$ is essentially self-adjoint on the space of smooth and compactly supported functions (see for instance~\cite{B2}).

\subsection{Canonical variation of the metric}

In this section, we introduce the canonical variation of the metric and study some of its basic properties. The Riemannian metric $g$ can be split as
\begin{equation} \label{eq:g-split}
g=g_\mathcal{H} \oplus g_{\mathcal{V}},
\end{equation}
and  we introduce the one-parameter family of rescaled Riemannian metrics:
\begin{align}\label{cv}
g_{\varepsilon}=g_\mathcal{H} \oplus  \frac{1}{\varepsilon }g_{\mathcal{V}}, \quad \varepsilon >0.
\end{align}
It is called the canonical variation of $g$ (see \cite{Besse}, Chapter 9, for a discussion in the submersion case). The Riemannian distance associated with $g_{\varepsilon}$ will be denoted by $d_\varepsilon$. It should be noted that $d_\varepsilon$, $\varepsilon >0$, form an increasing (as $\epsilon\downarrow 0$) family of distances converging pointwise to the sub-Riemannian distance $d_0$.

\

Let $x_0\in \M$ be fixed and for $\varepsilon \ge 0$ denote
\[
r_\varepsilon (x) =d_\varepsilon (x_0,x).
\]
The cut-locus $\mathbf{Cut}_\ve (x_0)$ of  $x_0$ for the distance $d_\ve$ is defined as the complement of the set of $y$'s in $\M$ such that there exists a unique length minimizing normal geodesic joining $x_0$ and $y$ and $x_0$ and $y$ are not conjugate along such geodesic (see \cite{A}). The global cut-locus of $\M$ is defined by
\[
\mathbf{Cut}_\ve (\M)=\left\{ (x,y) \in \M \times \M,\  y \in \mathbf{Cut}_\ve (x) \right\}.
\]

\begin{lemma}[\cite{A}, \cite{RT}]\label{cutlocus}
Let $\ve \ge 0$. The following statements hold:
 \begin{enumerate}[\rm 1.]
\item The set $\M \setminus \mathbf{Cut}_\ve (x_0)$ is open and dense in $\M$.
\item The function $(x,y) \to d_\ve  (x,y)^2$ is smooth on $\M \times \M \setminus \mathbf{Cut}_\ve (\M)$.
\end{enumerate}
\end{lemma}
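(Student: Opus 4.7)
The plan is to treat the Riemannian case $\ve>0$ and the sub-Riemannian case $\ve=0$ in a unified way via an appropriate exponential map: for $\ve>0$ this is the usual Riemannian exponential $\exp^\ve_{x_0}\colon T_{x_0}\M\to\M$ of $g_\ve$, while for $\ve=0$ it is the sub-Riemannian exponential defined on $T^*_{x_0}\M$ via the Hamiltonian flow of the cometric dual to $g_\Ho$. In either case, a point $y\notin\mathbf{Cut}_\ve(x_0)$ is by definition the image under this map of a unique vector/covector $\xi_y$ along which the corresponding normal geodesic is minimizing and $y$ is not conjugate to $x_0$. Non-conjugacy means exactly that $d\exp^\ve_{x_0}|_{\xi_y}$ is an isomorphism, so the inverse function theorem furnishes neighborhoods $U\ni\xi_y$ and $V\ni y$ between which $\exp^\ve_{x_0}$ is a diffeomorphism. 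After possibly shrinking $V$, I would argue that every $y'\in V$ continues to possess a unique minimizing normal geodesic from $x_0$: otherwise a sequence $y_k\to y$ would admit second minimizers whose initial vectors lie in a compact set by completeness of $g_\ve$, and a convergent subsequence would yield a second minimizer from $x_0$ to $y$, contradicting $y\notin\mathbf{Cut}_\ve(x_0)$. This gives openness.

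For density it suffices to show that $\mathbf{Cut}_\ve(x_0)$ has empty interior, for which I would combine two ingredients. First, the first conjugate locus of $\exp^\ve_{x_0}$ has measure zero by Sard's theorem applied to the smooth map $\exp^\ve_{x_0}$. Second, $r_\ve^2$ is locally semiconcave on $\M\setminus\{x_0\}$ and hence differentiable almost everywhere; at any point of differentiability outside $x_0$ the initial vector of any minimizing normal geodesic is determined by the differential of $r_\ve^2$ at $y$, so such a minimizer is unique. Together these two facts place $\mathbf{Cut}_\ve(x_0)$ inside a measure-zero set, so it is in particular nowhere dense.

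Smoothness of $r_\ve^2$ on the open set $\M\setminus\mathbf{Cut}_\ve(x_0)$ then follows directly from the previous setup: on the neighborhood $V$ one has $r_\ve^2(y')=\langle\xi_{y'},\xi_{y'}\rangle_\ve$ with $\xi_{y'}=(\exp^\ve_{x_0})^{-1}(y')$, so $r_\ve^2$ is a composition of smooth maps. Joint smoothness in $(x,y)$ off $\mathbf{Cut}_\ve(\M)$ follows from the smooth dependence of the exponential map on its basepoint.

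The main obstacle is genuinely the sub-Riemannian case $\ve=0$. One must exclude that a normal minimizer coexists with a strictly abnormal one (which would spoil the parametrization of minimizers by $T^*_{x_0}\M$ and the identification of the cut locus via the normal exponential map), and one must justify local semiconcavity of $r_0^2$ off the diagonal, which is not automatic. Both points are treated in the cited references \cite{A,RT} under the bracket-generating assumption on $\Ho$, which is part of the standing hypothesis of Section~\ref{ssec:foliation}; once these are granted, the argument above goes through uniformly in $\ve\ge 0$.
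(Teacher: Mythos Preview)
The paper does not give its own proof of this lemma: it is stated with attribution to the references \cite{A} and \cite{RT} and used as a black box. So there is nothing to compare your argument against here.

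That said, your outline is the standard one and is essentially what those references do. A couple of points worth tightening. In your openness argument, when you extract a subsequential limit $\eta$ of the second initial data $\eta_k$, you should note explicitly that $\eta_k\notin U$ (since $\exp^\ve_{x_0}$ is injective on $U$ and $\eta_k\neq\xi_{y_k}$), so the limit $\eta$ lies outside the open set $U$ and hence $\eta\neq\xi_y$; this is what rules out the degenerate case where the ``second'' minimizers collapse onto the first. For the case $\ve=0$, you correctly flag the two genuine issues---absence of strictly abnormal minimizers and local semiconcavity of $r_0^2$ off the diagonal---and defer them to \cite{A,RT}. In the specific setting of this paper (Sasakian foliations in Section~\ref{sec:HTypeComp}) the distribution is fat, so there are no abnormal extremals at all and the results of \cite{RT} apply directly; the general totally geodesic foliation case in Section~\ref{sec:RFoliation} only ever uses the lemma for $\ve>0$, where your Riemannian argument is complete. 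So your sketch matches both the intended sources and the way the lemma is actually consumed in the paper.
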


It is proved in \cite{B2} that since the foliation is totally geodesic, we have for every $\varepsilon >0$,
\[
\Gamma ( f , \| \nabla^{g_\varepsilon} f \|_{g_\varepsilon}^2)=\langle \nabla^{g_\varepsilon} f , \nabla^{g_\varepsilon} \Gamma(f) \rangle_{g_\varepsilon}
\]
where $\Gamma(f)=\| \nabla_\mathcal{H} f \|_g^2$ is the \textit{carr\'e du champ operator} of $ \Delta_{\mathcal{H}}$ and $\nabla^{g_\varepsilon}$ the Riemannian gradient for the metric $g_\varepsilon$.
Applying this equality with $f=r_\ve$, we obtain that outside of the cut-locus of $x_0$,
\begin{align}\label{observation}
\langle \nabla^{g_\varepsilon} r_\ve , \nabla^{g_\varepsilon} \Gamma(r_\ve) \rangle_{g_\varepsilon}=0.
\end{align}
This implies that $\Gamma (r_\varepsilon)$ is constant on $g_\varepsilon$ distance minimizing geodesics issued from $x_0$. Likewise, denoting $\Gamma^\V(f)=\| \nabla_\V f \|_g^2$, we obtain that $\Gamma^\V (r_\ve)$ is constant on $g_\varepsilon$ distance minimizing geodesics issued from $x_0$.

\

The following lemma will be useful:

\begin{lemma}\label{limit}
Let $x  \in \M$, $x \neq x_0$ which is not in $\cup_{n \ge 1}  \mathbf{Cut}_{1/n} (x_0)$, then
\[
\lim_{n \to +\infty } \| \nabla_\Ho r_{1/n} (x) \|_g =1.
\]
\end{lemma}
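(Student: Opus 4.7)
First, I would derive a pointwise identity satisfied by $r_\ve$ off the cut-locus. The usual Riemannian eikonal relation gives $\|\nabla^{g_\ve} r_\ve\|_{g_\ve}^2 = 1$, and writing the $g_\ve$-gradient as $\nabla^{g_\ve} r_\ve = A_\ve + B_\ve$ with $A_\ve \in \Ho$ and $B_\ve \in \V$, the defining identity $g_\ve(\nabla^{g_\ve} r_\ve, \cdot) = dr_\ve$ combined with $g_\ve = g_\mathcal{H} \oplus \ve^{-1} g_\mathcal{V}$ forces $A_\ve = \nabla_\Ho r_\ve$ and $B_\ve = \ve\, \nabla_\V r_\ve$. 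Taking the $g_\ve$-norm yields the key relation
\[
\|\nabla_\Ho r_\ve\|_g^2 + \ve\, \|\nabla_\V r_\ve\|_g^2 = 1 \qquad \text{on } \M \setminus \mathbf{Cut}_\ve(x_0).
\]
In particular $c_\ve(x) := \|\nabla_\Ho r_\ve(x)\|_g \in [0,1]$, and the lemma reduces to showing $c_\ve(x) \to 1$ as $\ve \to 0$.

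Next, let $\gamma_\ve : [0,1] \to \M$ be the minimizing $g_\ve$-geodesic from $x_0$ to $x$. Its velocity is $\dot\gamma_\ve = r_\ve(x) \nabla^{g_\ve} r_\ve|_{\gamma_\ve}$, and by \eqref{observation} both $\|\nabla_\Ho r_\ve\|_g$ and $\|\nabla_\V r_\ve\|_g$ are constant along $\gamma_\ve$, equal to their values at $x$. The horizontal and vertical parts $h_\ve, v_\ve$ of $\dot\gamma_\ve$ thus have constant $g$-norms $\|h_\ve\|_g = c_\ve(x)\, r_\ve(x)$ and $\|v_\ve\|_g = r_\ve(x)\sqrt{\ve(1 - c_\ve(x)^2)}$, so
\[
\|\dot\gamma_\ve\|_g^2 = r_\ve(x)^2\bigl(c_\ve(x)^2 + \ve(1 - c_\ve(x)^2)\bigr) \le r_0(x)^2.
\]
Thus the family $\{\gamma_\ve\}_{0 < \ve \le 1}$ is uniformly $r_0(x)$-Lipschitz in $g$ on $[0,1]$, and every $\gamma_\ve$ takes values in the (compact, by completeness) $g$-ball $\overline{B_g(x_0, r_0(x))}$.

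Given any sequence $\ve_k \downarrow 0$, Arzela--Ascoli then provides a subsequence (not relabeled) along which $\gamma_{\ve_k} \to \gamma_*$ uniformly for some Lipschitz curve $\gamma_* : [0,1] \to \M$ joining $x_0$ to $x$, and along which $c_{\ve_k}(x) \to c_* \in [0,1]$. Since $\|v_{\ve_k}\|_g \le r_0(x) \sqrt{\ve_k} \to 0$, the limit $\gamma_*$ is (almost-everywhere) horizontal, hence admissible for the sub-Riemannian length functional, so $\mathrm{length}_g(\gamma_*) \ge d_0(x_0, x) = r_0(x)$. On the other hand, lower semicontinuity of length under uniform convergence together with the explicit formula
\[
\mathrm{length}_g(\gamma_{\ve_k}) = r_{\ve_k}(x)\sqrt{c_{\ve_k}(x)^2 + \ve_k(1 - c_{\ve_k}(x)^2)} \longrightarrow r_0(x)\, c_*
\]
gives $r_0(x) \le r_0(x)\, c_*$, whence $c_* = 1$. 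Since every sequence $\ve_k \downarrow 0$ admits such a subsequence converging to $1$, this forces $c_\ve(x) \to 1$, which is the claim.

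The most delicate point is this final passage to the limit: one must ensure that the uniform limit $\gamma_*$ is indeed horizontal (through the vanishing of the $v_\ve$), that $\gamma_*$ is absolutely continuous enough for its sub-Riemannian length to provide a lower bound by $d_0(x_0,x)$, and that metric-space lower semicontinuity of length can be invoked to link $\mathrm{length}_g(\gamma_*)$ with $\liminf_k \mathrm{length}_g(\gamma_{\ve_k})$; this is precisely where the sub-Riemannian structure of $(\M,\Ho,g_\Ho)$ couples to its Riemannian approximations and forces $c_\ve(x)$ to saturate the bound $1$.
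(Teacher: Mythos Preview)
Your proof is correct and shares its skeleton with the paper's: both take the minimizing $g_\ve$-geodesics $\gamma_\ve$, establish uniform Lipschitz bounds (the paper via test functions, you via the explicit formula for $\|\dot\gamma_\ve\|_g$), apply Arzel\`a--Ascoli to extract a convergent subsequence, and rule out a subsequential limit $c_*<1$. The difference is in the endgame. You argue that the uniform limit $\gamma_*$ is a.e.\ horizontal (from $\|v_{\ve_k}\|_g\to 0$), invoke lower semicontinuity of the $g$-length under uniform convergence, and compare with $d_0(x_0,x)$ to force $c_*\ge 1$. The paper instead bypasses horizontality of the limit curve: it passes the estimate
\[
|f(\gamma_{\ve_k}(t))-f(\gamma_{\ve_k}(s))|\le\bigl(\|\gamma'_{\ve_k}\|_\Ho\|\nabla_\Ho f\|_\infty+\|\gamma'_{\ve_k}\|_\V\|\nabla_\V f\|_\infty\bigr)|t-s|
\]
to the limit to obtain $|f(x)-f(x_0)|\le a\,\|\nabla_\Ho f\|_\infty$ for all $f\in C_0^\infty(\M)$, and then invokes the dual characterization $d_0(x_0,x)=\sup\{|f(x)-f(x_0)|:\|\nabla_\Ho f\|_\infty\le 1\}$ to reach the contradiction $d_0(x_0,x)\le a<d_0(x_0,x)$. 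The paper's route is slightly cleaner in that it avoids the (standard but not entirely trivial on a manifold) step of showing that the uniform limit of curves with vanishing vertical speed is itself horizontal; your route is more directly geometric and makes the role of $\gamma_*$ as an admissible sub-Riemannian path explicit.
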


\begin{proof}
Let $\gamma_n :[0,1]\to \M$ be the unique, constant speed, and length minimizing $g_{1/n}$ geodesic connecting $x_0$ to $x$. From \eqref{observation}, one has $  d_{1/n}(x_0,x)\| \nabla_\Ho r_{1/n} (x) \|_g=\| \gamma'_n(0) \|_\Ho$. We therefore need to prove that $\lim_{n \to \infty} \| \gamma'_n(0) \|_\Ho=d_0(x_0,x)$. Let us observe that
\[
\| \gamma'_n(0) \|^2_\Ho +n \| \gamma'_n(0) \|^2_\V = d_{1/n}(x_0,x)^2.
\]
Therefore, $\lim_{n \to \infty} \| \gamma'_n(0) \|^2_\V=0$. Let us now assume that $\| \gamma'_n(0) \|_\Ho$ does not converge to $d_0(x_0,x)$. In that case, there exists a subsequence $n_k$ such that $\| \gamma'_{n_k}(0) \|_\Ho$ converges to some $0 \le a <d_0(x_0,x)$. For $f \in C_0^\infty (\M)$ and $0 \le s \le t \le 1$, we have
\[
| f(\gamma_{n_k}(t))-f (\gamma_{n_k}(s))| \le \left(\| \gamma'_{n_k}(0) \|_\Ho \| \nabla_\Ho f \|_\infty + \| \gamma'_{n_k}(0) \|_\V \| \nabla_\V f \|_\infty \right)   (t-s).
\]
From Arzel\`a-Ascoli's theorem we deduce that there exists a subsequence which we continue to denote $\gamma_{n_k}$ that converges uniformly to an absolutely continuous curve $\gamma$, such that $\gamma(0)=x_0$, $\gamma(1)=x$. We have  for $f \in C_0^\infty (\M)$ and $0 \le s \le t \le 1$, 
\[
| f(\gamma (t))-f (\gamma (s))| \le a  \| \nabla_\Ho f \|_\infty (t-s).
\]
In particular, we deduce that
\[
| f(x)-f (x_0)| \le a  \| \nabla_\Ho f \|_\infty .
\]
Since it holds for every $f \in C_0^\infty (\M)$, one deduces 
\[
d_0(x_0,x)=\sup \{ |f(x)-f(x_0)|,\ f \in C_0^\infty (\M),\ \| \nabla_\Ho f \|_\infty \le 1\}  \le a .
\] 
This contradicts the fact that $a< d_0(x_0,x)$.
\end{proof}

\subsection{Horizontal and vertical index formulas}

There is a  first natural connection on $\M$ that respects the foliation structure, the Bott connection, which is given as follows:
\[
\nabla_X Y =
\begin{cases}
\pi_{\mathcal{H}} ( \nabla_X^g Y) , &X,Y \in \Gamma^\infty(\mathcal{H}), \\
\pi_{\mathcal{H}} ( [X,Y]),  &X \in \Gamma^\infty(\mathcal{V}),\ Y \in \Gamma^\infty(\mathcal{H}), \\
\pi_{\mathcal{V}} ( [X,Y]),  &X \in \Gamma^\infty(\mathcal{H}),\ Y \in \Gamma^\infty(\mathcal{V}), \\
\pi_{\mathcal{V}} ( \nabla_X^g Y) , &X,Y \in \Gamma^\infty(\mathcal{V}),
\end{cases}
\]
where $\nabla^g$ is the Levi-Civita connection for $g$ and $\pi_\mathcal{H}$ (resp.~$\pi_\mathcal{V}$) the projection on $\mathcal{H}$ (resp.~$\mathcal{V}$). It is easy to check that for every $\varepsilon >0$, this connection satisfies $\nabla g_\varepsilon=0$. A fundamental property of $\nabla$ is that $\Ho$ and $\V$ are parallel.

\

The torsion $T$ of $\nabla$ is given as
$$T(X,Y) = -\pi_{\V} [\pi_\Ho X, \pi_\Ho Y ].$$

For $Z \in \Gamma^\infty(\V)$, there is a  unique skew-symmetric endomorphism  $J_Z:\mathcal{H}_x \to \mathcal{H}_x$ such that for all horizontal vector fields $X$ and $Y$,
\begin{align}\label{Jmap}
g_\mathcal{H} (J_Z (X),Y)= g_\mathcal{V} (Z,T(X,Y)),
\end{align}
where $T$ is the torsion tensor of $\nabla$. We extend $J_{Z}$ to be 0 on  $\mathcal{V}_x$. Also, if $Z\in \Gamma^\infty (\Ho)$, from \eqref{Jmap} we set $J_Z=0$.

\

In the sequel, we shall make extensive use of the notion of adjoint connection. Adjoint connections naturally appear in the study of Weitzenb\"ock type identities (see \cite{Elworthy,GT3}). If~$D$ is a connection on $\M$, the adjoint connection of $D$ will be denoted $\hat{D}$ and is defined by 
\[
\hat{D}_X Y=D_X Y -T^D (X,Y)
\]
where $T^D$ is the torsion tensor of $D$. Metric connections whose adjoint connections are also metric are the natural generalizations of Levi-Civita connections (see \cite{GT3} and Appendix~1).

The adjoint connection of the Bott connection is not metric. For this reason, for computations, we shall rather make use of the following family of connections first introduced in~\cite{Bau}:
\[
\nabla^\varepsilon_X Y= \nabla_X Y -T(X,Y) +\frac{1}{\varepsilon} J_Y X,
\] 
and we shall only keep the Bott connection as a reference connection.
It is readily checked that $\nabla^\varepsilon g_\varepsilon =0$. The adjoint connection of $\nabla^\varepsilon$ is then given by
\[
\hat{\nabla}^\varepsilon_X Y=\nabla_X Y +\frac{1}{\varepsilon} J_X Y,
\]
thus $\hat{\nabla}^\varepsilon$ is also a metric connection. It moreover preserves the horizontal and vertical bundle, in contrast to the connection $\nabla^\ve$ which does not have this property.

\

For later use, we record that the torsion of $\hat{\nabla}^\varepsilon$ is 
\[
\hat{T}^\varepsilon (X,Y)=T(X,Y)-\frac{1}{\varepsilon} J_Y X+\frac{1}{\varepsilon} J_X Y.
\]
The Riemannian curvature tensor of $\hat{\nabla}^\varepsilon$ is easily computed as
\begin{align}\notag
\hat{R}^\varepsilon (X,Y)Z&=  R(X,Y)Z +\frac{1}{\varepsilon} J_{T(X,Y)} Z +\frac{1}{\varepsilon^2} (J_X J_Y -J_Y J_X)Z\\ &\quad+  \frac{1}{\varepsilon} (\nabla_X J)_Y  Z -  \frac{1}{\varepsilon} (\nabla_Y J)_X  Z
\label{curvature adjoint}\end{align}
where $R$ is the curvature tensor of the Bott connection.  

\

 Since $\nabla^\varepsilon$ and $\hat{\nabla}^\varepsilon$ are both metric,  observe that the Levi-Civita connection $ \nabla^{g_\ve}$ for the metric $g_\varepsilon$ is given by $\frac{1}{2} ( \nabla^\varepsilon +\hat{\nabla}^\varepsilon)$. In particular, one has:

\begin{equation} \label{Bott} \nabla_X Y = \nabla^{g_\ve}_X Y + \frac{1}{2} T(X,Y) - \frac{1}{2\ve} J_X Y - \frac{1}{2\ve} J_Y X.\end{equation}

We point out that working with  $\nabla^\varepsilon $ and  $\hat{\nabla}^\varepsilon $ instead of the Levi-Civita connection  $\nabla^{g_\ve}$ greatly simplifies some computations (see Remark \ref{Levi remark} and Section~\ref{sec:HTypeComp}), whereas we can still freely use simple  second variation formulas (see Appendix~1).

\

The following lemma is obvious.

\begin{lemma}[Geodesic equation]
The equation for $g_\varepsilon$-geodesics is
\[
\nabla_{\gamma'} \gamma'+\frac{1}{\varepsilon} J_{\gamma'} \gamma' =0.
\]
\end{lemma}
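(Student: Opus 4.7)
The plan is to simply substitute into the conversion formula \eqref{Bott} between the Bott connection $\nabla$ and the Levi-Civita connection $\nabla^{g_\varepsilon}$ that has already been derived in the excerpt. A standard $g_\varepsilon$-geodesic satisfies $\nabla^{g_\varepsilon}_{\gamma'}\gamma'=0$, so the whole content of the lemma is to translate this into the language of the Bott connection, which is the one being used as the reference connection throughout the paper.

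Concretely, I would set $X=Y=\gamma'$ in the identity
\[
\nabla_X Y = \nabla^{g_\varepsilon}_X Y + \tfrac{1}{2} T(X,Y) - \tfrac{1}{2\varepsilon} J_X Y - \tfrac{1}{2\varepsilon} J_Y X.
\]
Two observations then finish the computation. First, the torsion $T$ is antisymmetric in its arguments, so $T(\gamma',\gamma')=0$. Second, the two $J$-terms coincide when $X=Y$, giving $\tfrac{1}{2\varepsilon}J_{\gamma'}\gamma' + \tfrac{1}{2\varepsilon}J_{\gamma'}\gamma' = \tfrac{1}{\varepsilon}J_{\gamma'}\gamma'$. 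Substituting and using $\nabla^{g_\varepsilon}_{\gamma'}\gamma'=0$ yields
\[
\nabla_{\gamma'}\gamma' = -\tfrac{1}{\varepsilon} J_{\gamma'}\gamma',
\]
which is the claimed equation.

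There is essentially no obstacle here: the lemma is really just a bookkeeping consequence of \eqref{Bott} together with the skew-symmetry of $T$. The only point worth flagging is that $J_{\gamma'}$ is a priori defined on vertical vector fields and extended by zero on horizontal ones (with the symmetric convention $J_Z = 0$ for $Z \in \Gamma^\infty(\Ho)$), so one should briefly remark that the expression $J_{\gamma'}\gamma'$ is to be interpreted via the bilinear extension in \eqref{Bott}, i.e.\ as $J_{\pi_\V \gamma'}(\pi_\Ho \gamma') + J_{\pi_\V \gamma'}(\pi_\V \gamma')$, with the vertical-vertical component vanishing because $J_Z$ acts as $0$ on $\V$. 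With that convention the identity holds as stated.
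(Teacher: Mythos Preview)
Your proof is correct and is exactly the argument the paper gives: the paper's proof reads in its entirety ``The equation for $g_\varepsilon$ geodesics is $\nabla^{g_\ve}_{\gamma'} \gamma' =0$, and one concludes with \eqref{Bott}.'' One small wording slip in your closing remark: $J_Z$ (for $Z$ vertical) is a priori defined on \emph{horizontal} vectors and extended by zero on vertical ones, not the other way around---but your final interpretation of $J_{\gamma'}\gamma'$ is correct regardless.
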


\begin{proof}
The equation for $g_\varepsilon$ geodesics is $\nabla^{g_\ve}_{\gamma'} \gamma' =0$, and one concludes with \eqref{Bott}.
\end{proof}

After these preliminaries, we are now ready to prove one of the main results of the section.  As before, let $d_\ve$ be the distance of $g_\ve$. Let $x_0 \in \M$ be any point and define $r_\ve = r_{\ve,x_0}$ by $r_\ve(x) = d_\ve(x_0, x)$.

\begin{proposition}[Horizontal and vertical index formulas]\label{index formulas}
Let $\nabla^2$ denote the Hessian of the Bott connection $\nabla$. If $x$ is not in the cut-locus of $x_0$ with respect to $g_\ve$, and if $\gamma$ is the unique $g_\varepsilon$ geodesic from $x_0$ to $x$ parametrized by arc length, then:
\begin{enumerate}[\rm (a)]
\item For every $v \in \Ho_x$ and vector field $Y$ along $\gamma$, taking values in $\Ho$ and satisfying $Y(0) = 0$ and $Y(r_\ve(x)) = v$, we have
$$\nabla^2 r_\ve(v,v) \leq I_{\Ho,\ve}(Y, Y)$$
where
\begin{align*} I_{\Ho,\ve}(Y, Y) & = \int_0^{r_\ve(x)} \left( \|\hat{\nabla}^{2\ve}_{\gamma'} Y \|_g^2(t) + \langle R( \gamma', Y )  \gamma', Y \rangle_g(t)  \right) dt \\
& \quad + \frac{1}{\ve} \int_0^{r_\ve(x)} \left( \langle (\nabla_{Y} T)(Y,  \gamma') ,  \gamma' \rangle_g(t) + \| T( \gamma', Y) \|^2_g(t) - \frac{1}{4\ve} \| J_{ \gamma'} Y\|^2_g(t)\right) dt. 
\end{align*}
\item For every $w \in \V_x$ and vector field $Z$ along $\gamma$, taking values in $\V$ and satisfying $Z(0) = 0$ and $Z(r_\ve(x)) = w$, we have
$$\nabla^2 r_\ve (w,w) \leq I_{\V,\ve}(Z, Z)$$
where
$$I_{\V,\ve}(Z, Z) = \frac{1}{\ve} \int_0^{r_\ve(x)} \left( \|\nabla_{ \gamma'} Z \|_g^2(t) + \langle R( \gamma', Z )  \gamma', Z \rangle_g (t)   \right) dt.$$
\end{enumerate}

\end{proposition}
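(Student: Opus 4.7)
The plan is to invoke the Riemannian second variation formula for $g_\ve$-arc length along the unique $g_\ve$-minimizing geodesic $\gamma$ from $x_0$ to $x$, and then translate the resulting Levi-Civita index form into the Bott-connection form stated in the proposition, using the structural identity \eqref{Bott} and the adjoint-curvature formula \eqref{curvature adjoint}.

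A preliminary observation is that $\nabla^2 r_\ve(v,v)=\nabla^{g_\ve,2}r_\ve(v,v)$ whenever $v$ is purely horizontal or purely vertical. Indeed, the difference tensor $A(X,Y):=\nabla_X Y-\nabla^{g_\ve}_X Y=\frac12 T(X,Y)-\frac{1}{2\ve}(J_X Y+J_Y X)$ satisfies $A(v,v)=-\frac{1}{\ve}J_v v$, which vanishes in both cases: $J_v=0$ when $v\in\Ho$ by definition, and $J_v v=0$ when $v\in\V$ since $J_v$ is extended by zero on $\V$. The classical index lemma applied to $\gamma$ then yields
$$\nabla^2 r_\ve(v,v)\le I^{g_\ve}(Y,Y):=\int_0^{r_\ve(x)}\!\left(\|\nabla^{g_\ve}_{\gamma'}Y\|_{g_\ve}^2-\langle R^{g_\ve}(\gamma',Y)\gamma',Y\rangle_{g_\ve}\right)dt$$
for any admissible variation field $Y$ with $Y(0)=0$, $Y(r_\ve(x))=v$.

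The core of the argument is rewriting $I^{g_\ve}$ in terms of Bott-connection data. For part (a), with $Y\in\Ho$ one has $J_Y=0$, so \eqref{Bott} decomposes $\nabla^{g_\ve}_{\gamma'}Y$ into a horizontal piece $\nabla_{\gamma'}Y+\frac{1}{2\ve}J_{\gamma'}Y=\hat\nabla^{2\ve}_{\gamma'}Y$ and a vertical piece $-\frac12 T(\gamma',Y)$; the metric $g_\ve=g_\Ho\oplus\frac{1}{\ve}g_\V$ then gives $\|\nabla^{g_\ve}_{\gamma'}Y\|_{g_\ve}^2=\|\hat\nabla^{2\ve}_{\gamma'}Y\|_g^2+\frac{1}{4\ve}\|T(\gamma',Y)\|_g^2$. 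For the curvature term I would use $\nabla^{g_\ve}=\frac12(\nabla^\ve+\hat\nabla^\ve)$ together with \eqref{curvature adjoint} and the metricity of both $\nabla^\ve$ and $\hat\nabla^\ve$ to express $R^{g_\ve}$ in terms of the Bott curvature $R$ plus correction tensors of order $\frac{1}{\ve}$ and $\frac{1}{\ve^2}$ built from $\nabla T$, $J\circ J$, and $T$. A careful bookkeeping of these corrections, combined with the kinetic term above, produces exactly the announced additional contributions $\frac{1}{\ve}\langle(\nabla_Y T)(Y,\gamma'),\gamma'\rangle_g$, $\frac{1}{\ve}\|T(\gamma',Y)\|_g^2$, and $-\frac{1}{4\ve^2}\|J_{\gamma'}Y\|_g^2$ appearing in $I_{\Ho,\ve}$.

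Part (b) follows the same scheme but simplifies drastically: for $Z\in\V$ one has $T(\gamma',Z)=0$ and $J_{\gamma'}Z=0$, so \eqref{Bott} gives horizontal part $\frac{1}{2\ve}J_Z\gamma'$ and vertical part $\nabla_{\gamma'}Z$, whence $\|\nabla^{g_\ve}_{\gamma'}Z\|_{g_\ve}^2=\frac{1}{\ve}\|\nabla_{\gamma'}Z\|_g^2+\frac{1}{4\ve^2}\|J_Z\gamma'\|_g^2$; the stray $\frac{1}{4\ve^2}$ piece is absorbed by the analogous $J\circ J$ contribution from the curvature translation, leaving the clean expression $I_{\V,\ve}$. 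The main obstacle throughout is the tensorial bookkeeping of the curvature translation in the horizontal case --- ensuring that the various correction terms coming from \eqref{curvature adjoint} and the two metric connections $\nabla^\ve,\hat\nabla^\ve$ telescope onto exactly the combination stated.
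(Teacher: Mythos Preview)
Your approach is correct and is precisely the Levi-Civita route that the paper itself acknowledges as a valid alternative in Remark~\ref{Levi remark}, but the paper takes a shorter path. Rather than starting from the classical index lemma for $\nabla^{g_\ve}$ and then translating the curvature via the full formula~\eqref{LCBottCurv}, the paper applies the generalized index lemma of Appendix~1 (Lemma~\ref{index lemma}) directly to the metric pair $(\nabla^\ve,\hat\nabla^\ve)$, obtaining
\[
\nabla^2 r_\ve(v,v)\le\int_0^{r_\ve}\Bigl(\|\hat\nabla^\ve_{\gamma'}Y\|_\ve^2-\langle\hat R^\ve(\gamma',Y)Y,\gamma'\rangle_\ve-\langle\hat T^\ve(Y,\hat\nabla^\ve_{\gamma'}Y),\gamma'\rangle_\ve\Bigr)\,dt.
\]
The gain is that $\hat R^\ve$ in~\eqref{curvature adjoint} has five terms where $R^{g_\ve}$ in~\eqref{LCBottCurv} has a dozen, and for horizontal $Y$ two of those five already vanish; moreover $\hat\nabla^\ve$ preserves $\Ho$, so the torsion correction $\hat T^\ve(Y,\hat\nabla^\ve_{\gamma'}Y)$ collapses to an ordinary $T$-term. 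The kinetic piece is then regrouped by completing the square
\[
\|\hat\nabla^\ve_{\gamma'}Y\|_\ve^2-\tfrac{1}{\ve}\langle J_{\gamma'}Y,\nabla_{\gamma'}Y\rangle_\Ho=\|\hat\nabla^{2\ve}_{\gamma'}Y\|_\Ho^2+\tfrac{3}{4\ve^2}\|J_{\gamma'}Y\|_\Ho^2,
\]
and the result drops out in a few lines. What your route buys is conceptual familiarity (standard Riemannian second variation plus a change of connection); what the paper's route buys is a computation short enough to display in full, whereas you had to leave the curvature bookkeeping as an unperformed ``careful bookkeeping'' step. Your preliminary identification of the Bott and Levi-Civita Hessians on pure vectors is correct and is the analogue of what the Appendix~1 formalism handles implicitly for the pair $(\nabla^\ve,\hat\nabla^\ve)$.
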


\begin{proof}
We prove a). The proof of b) follows by a similar and even simpler computation. From the classical theory (see Lemma \ref{index lemma} in the Appendix) one has:
$$\nabla^2 r_\ve(v,v) \leq I_{\Ho,\ve}(Y, Y),$$
where
\[
 I_{\Ho,\ve}(Y, Y)=\int_0^{r_\varepsilon} \left( \| \hat{\nabla}^\varepsilon_{\gamma'} Y  \|_\varepsilon^2-\langle \hat{R}^\varepsilon(\gamma',Y)Y,\gamma'\rangle_\varepsilon -\langle \hat{T}^\varepsilon(Y,\hat{\nabla}^\varepsilon_{\gamma'} Y),\gamma' \rangle_\varepsilon \right) dt.
\]

Since $Y$ is horizontal, one has
\[
\hat{R}^\varepsilon(\gamma',Y)Y=R(\gamma',Y)Y+\frac{1}{\varepsilon} J_{T(\gamma',Y)} Y-  \frac{1}{\varepsilon} (\nabla_{Y} J)_{\gamma'}  Y
\]
and
\[
\hat{T}^\varepsilon(Y,\hat{\nabla}^\varepsilon_{\gamma'} Y)=T(Y,\hat{\nabla}^\varepsilon_{\gamma'} Y).
\]
In particular, we deduce that
\[
\langle \hat{T}^\varepsilon(Y,\hat{\nabla}^\varepsilon_{\gamma'} Y),\gamma' \rangle_\varepsilon=\frac{1}{\varepsilon} \langle J_{\gamma'} Y , \nabla_{\gamma'} Y \rangle_{\Ho} +\frac{1}{\varepsilon^2} \| J_{\gamma'} Y\|^2_{\Ho}.
\]
To conclude the proof, we observe that
\begin{align*}
\| \hat{\nabla}^\varepsilon_{\gamma'} Y  \|_\varepsilon^2-\frac{1}{\varepsilon} \langle J_{\gamma'} Y , \nabla_{\gamma'} Y \rangle_{\Ho}&=\| \nabla_{\gamma'} Y  +\frac{1}{\varepsilon}  J_{\gamma'} Y \|_{\Ho}^2-\frac{1}{\varepsilon} \langle J_{\gamma'} Y , \nabla_{\gamma'} Y \rangle_{\Ho} \\
 &=\| \nabla_{\gamma'} Y  +\frac{1}{2\varepsilon}  J_{\gamma'} Y \|_{\Ho}^2+\frac{3}{4\varepsilon^2} \|J_{\gamma'} Y \|_\varepsilon^2.\qedhere
\end{align*}
\end{proof}

\begin{remark}\label{Levi remark}
By using \eqref{Bott}, a lengthy but routine computation shows  that the Riemannian curvature tensor of the Levi-Civita connection is given by
\begin{align} \label{LCBottCurv} & R^{g_\ve}(X,Y) Z \\ \nonumber
& = R(X,Y) Z - \frac{1}{2} (\nabla_X T)(Y,Z) + \frac{1}{2} (\nabla_Y T)(X,Z)  + \frac{1}{2\ve} (\nabla_X J)_Y Z - \frac{1}{2\ve} (\nabla_Y J)_X Z\\ \nonumber
& \quad    + \frac{1}{2\ve} (\nabla_X J)_Z Y - \frac{1}{2\ve} (\nabla_Y J)_Z X + \frac{1}{2\ve} J_{T(X,Y)} Z, \\ \nonumber
& \quad - \frac{1}{4\ve} T(X, J_Y Z + J_Z Y)   + \frac{1}{4\ve^2} J_X \left(J_Y Z + J_Z Y\right) - \frac{1}{4\ve} J_{T(Y,Z)} X \\ \nonumber
& \quad + \frac{1}{4\ve} T(Y, J_X Z + J_Z X)   - \frac{1}{4\ve^2} J_Y \left(J_X Z + J_Z X\right) + \frac{1}{4\ve} J_{T(X,Z)} Y.
\end{align}
Using this formula and the usual index formulas for the Levi-Civita connection  yield the same horizontal and vertical index formulas, however using the adjoint connection $\hat{\nabla}^\varepsilon$ greatly simplifies computations.
\end{remark}

\subsection{Horizontal Laplacian comparison theorem}
\label{sec:H-Lc}

We now introduce the relevant tensors which will be used to control the index forms.  The horizontal divergence of the torsion $T$ is the $(1,1)$ tensor  which in a local horizontal frame $X_1, \dots, X_n$  is defined by
\[
\delta_\mathcal{H} T (X):= -\sum_{j=1}^n(\nabla_{X_j} T) (X_j,X).
\]
Going forward, we will always assume in the sequel of the paper that the horizontal distribution $\Ho$ satisfies \emph{the Yang-Mills condition}, meaning that $\delta_\Ho T = 0$ (see \cite{B2,GT1,GT2} for the geometric significance of this condition).

\

We will denote by $\mathbf{Ric}_\mathcal{H}$ the horizontal Ricci curvature of the Bott connection, that is to say the horizontal trace of the full curvature tensor $R$  of the Bott connection. Using the observation that $\nabla$ preserves the splitting $\Ho \oplus \V$ and from the first Bianchi identity, it follows that $\mathbf{Ric}_\Ho( X, Y) = \mathbf{Ric}_\Ho(\pi_\Ho X,\pi_\Ho Y)$ (see the computation in the proof of Lemma \ref{positive vertical} for details).

\

If $Z_1,\dots,Z_m$ is a local vertical frame, the $(1,1)$ tensor
\[
\mathbf{J}^2 := \sum_{\ee=1}^m J_{Z_\ee}J_{Z_\ee}
\]
does not depend on the choice of the frame and may globally be defined.

\begin{remark}\label{ricci}
A simple computation (see for instance Theorem 9.70, Chapter 9 in \textup{\cite{Besse}}) gives the following result for the Riemannian Ricci curvature of the metric $g_\varepsilon$. For every $X \in \Gamma^\infty(\mathcal{H})$ and $Z \in \Gamma^\infty(\mathcal{V})$,
\begin{align*}
&\mathbf{Ric}^{g_\varepsilon} (Z,Z)=\mathbf{Ric}_\mathcal{V} (Z,Z)-\frac{1}{4\varepsilon^2} \mathbf{Tr} (J^2_{Z})\\
&\mathbf{Ric}^{g_\varepsilon} (X,Z)=0\\
&\mathbf{Ric}^{g_\varepsilon} (X,X)=\mathbf{Ric}_\mathcal{H} (X,X)+\frac{1}{2\varepsilon} \langle \mathbf{J}^2 X, X  \rangle_\mathcal{H} ,
\end{align*}
where $\mathbf{Ric}_\V $ is   the Ricci curvature of the leaves as sub-manifolds of $(\M,g)$.
\end{remark}

Let $x_0\in \M$ be fixed and for $\varepsilon >0$ let
\[
r_\varepsilon (x) =d_\varepsilon (x_0,x).
\]
We assume  that globally on $\M$, for every $X \in \Gamma^\infty(\mathcal{H})$ and $Z \in \Gamma^\infty(\mathcal{V})$,
\[
\mathbf{Ric}_\Ho (X,X) \ge  \rho_1 (r_\varepsilon )  \| X \|^2_{\mathcal{H}}, \quad -\langle \mathbf{J}^2 X, X  \rangle_\mathcal{H} \le \kappa (r_\varepsilon) \| X \|^2_\mathcal{H},\quad -\frac{1}{4} \mathbf{Tr} (J^2_{Z})\ge \rho_2(r_\varepsilon) \| Z \|^2_\mathcal{V},
\]
for some continuous functions $\rho_1,\rho_2,\kappa$.

\begin{theorem}\label{comparison 1}
Consider the operator $\Delta_\ch = \mathbf{Tr}_\ch \nabla^2 = \mathrm{div} \, \nabla_\ch$.
Let $x \in \M$, $x \neq x_0$ and $x$ not in the $d_\varepsilon$ cut-locus of $x_0$. Let $G:[0,r_\varepsilon(x)]\to \mathbb{R}_{\ge 0}$ be a differentiable function which is positive on $(0,r_\varepsilon(x)]$ and such that $ G(0)=0$. We have
\[
\Delta_\Ho r_\varepsilon(x) \le \frac{1}{G(r_\varepsilon(x))^2} \int_0^{r_\varepsilon(x)} \left(nG'(s)^2-\left[ \left(\rho_1 (s )-\frac{1}{\varepsilon} \kappa(s)  \right)\Gamma(r_\varepsilon)(x)+\rho_2(s)  \Gamma^\V(r_\varepsilon)(x)\right]G(s)^2 \right)ds.
\]
\end{theorem}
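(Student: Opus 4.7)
The plan is to deduce the sub-Laplacian bound from Proposition~\ref{index formulas}(a) by constructing, at a point $x \notin \mathbf{Cut}_\ve(x_0)$, a horizontal orthonormal test basis along the minimizing $g_\ve$-geodesic $\gamma\colon [0,r_\ve(x)] \to \M$ from $x_0$ to $x$, applying the horizontal index inequality to each direction, and summing. Taking the horizontal trace of the Hessian pointwise at $x$ yields $\Delta_\Ho r_\ve(x)$ on the left-hand side, so the task reduces to controlling $\sum_i I_{\Ho,\ve}(Y_i,Y_i)$.

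\textbf{Choice of test fields.} Let $e_1,\dots,e_n$ be an orthonormal basis of $\Ho_x$ (for $g_\Ho=g_{2\ve}|_\Ho$), and let $E_i(t)$ be the $\hat\nabla^{2\ve}$-parallel transport of $e_i$ along $\gamma$. Two facts about $\hat\nabla^{2\ve}$ are crucial: it preserves the horizontal bundle (so each $E_i(t)\in\Ho_{\gamma(t)}$), and it is metric with respect to $g_{2\ve}$ (so $\{E_i(t)\}$ remains orthonormal in $g_\Ho$ along $\gamma$). Set
\[
Y_i(t) = \frac{G(t)}{G(r_\ve(x))}\, E_i(t),
\]
which is horizontal with $Y_i(0)=0$ and $Y_i(r_\ve(x))=e_i$. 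This choice makes $\hat\nabla^{2\ve}_{\gamma'} Y_i = \frac{G'(t)}{G(r_\ve(x))} E_i(t)$ and pushes all the $t$-dependence into the scalar $G$.

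\textbf{Evaluating the sum of index forms.} For each of the five ingredients in $I_{\Ho,\ve}$, the $E_i$-sum produces an intrinsic quantity:
\begin{itemize}
\item[(i)] $\sum_i \|\hat\nabla^{2\ve}_{\gamma'} Y_i\|_g^2 = n\, G'(t)^2 / G(r_\ve(x))^2$.
\item[(ii)] Since $\{E_i\}$ is a horizontal orthonormal frame, $\sum_i\langle R(\gamma',E_i)\gamma',E_i\rangle_g = -\mathbf{Ric}_\Ho(\gamma',\gamma') = -\mathbf{Ric}_\Ho(\pi_\Ho\gamma',\pi_\Ho\gamma')$ by the remark preceding the theorem, which is $\leq -\rho_1(r_\ve)\,\Gamma(r_\ve)(x)$ using the assumption together with the identity $\|\pi_\Ho\gamma'\|_g^2 = \Gamma(r_\ve)(x)$.
\item[(iii)] The Yang--Mills assumption $\delta_\Ho T = 0$ kills $\sum_i(\nabla_{E_i} T)(E_i,\gamma')$.
\item[(iv)] The defining relation $g_\Ho(J_Z X,Y)=g_\V(Z,T(X,Y))$ together with Parseval in $\V$ gives $\sum_i\|T(\gamma',E_i)\|_g^2 = -\langle \mathbf{J}^2 \pi_\Ho\gamma',\pi_\Ho\gamma'\rangle_\Ho \leq \kappa(r_\ve)\,\Gamma(r_\ve)(x)$.
\item[(v)] Similarly $\sum_i\|J_{\gamma'}E_i\|_g^2 = -\mathbf{Tr}(J_{\pi_\V\gamma'}^2) \geq 4\rho_2(r_\ve)\,\|\pi_\V\gamma'\|_\V^2 = 4\rho_2(r_\ve)\,\ve^2\, \Gamma^\V(r_\ve)(x)$, where the last identity uses $\pi_\V\nabla^{g_\ve}r_\ve = \ve\,\nabla_\V r_\ve$.
\end{itemize}
Crucially, $\Gamma(r_\ve)(x)$ and $\Gamma^\V(r_\ve)(x)$ are pulled out of the integrals because they are constant along $\gamma$ by \eqref{observation} and its vertical counterpart. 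Combining (i)--(v) and multiplying the $J$-term by $-1/(4\ve^2)$ makes the $\ve^2$ in (v) cancel exactly, so that the total contribution to the bracket $[\cdots]$ in the theorem is $(\rho_1-\kappa/\ve)\,\Gamma(r_\ve)(x) + \rho_2(r_\ve)\,\Gamma^\V(r_\ve)(x)$.

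\textbf{Expected obstacle.} The main delicate point is sign/normalization bookkeeping, especially the fact that the $g_\ve$-unit-speed parametrization of $\gamma$ distributes its norm between the horizontal and vertical directions as $\|\pi_\Ho\gamma'\|_g^2 + \tfrac{1}{\ve}\|\pi_\V\gamma'\|_g^2 = 1$, so $\|\pi_\V\gamma'\|_g^2 = \ve^2\Gamma^\V(r_\ve)$ rather than $\ve\,\Gamma^\V(r_\ve)$. It is precisely this factor that, when combined with the prefactor $1/(4\ve^2)$ in front of the $J$-term in $I_{\Ho,\ve}$, cancels and produces a clean, $\ve$-free coefficient of $\Gamma^\V$ in the final bound. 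All other manipulations are straightforward tensorial identities once the right test field $Y_i$ and the right parallel transport ($\hat\nabla^{2\ve}$, not the Levi--Civita connection $\nabla^{g_\ve}$) are in place.
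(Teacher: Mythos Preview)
Your proposal is correct and follows essentially the same route as the paper's proof: choose a $\hat\nabla^{2\ve}$-parallel horizontal orthonormal frame along the minimizing $g_\ve$-geodesic, scale by $G(t)/G(r_\ve(x))$, apply Proposition~\ref{index formulas}(a) to each test field, sum, and use the Yang--Mills condition together with the curvature bounds and the identities $\|\pi_\Ho\gamma'\|_g^2=\Gamma(r_\ve)(x)$, $\|\pi_\V\gamma'\|_g^2=\ve^2\Gamma^\V(r_\ve)(x)$. The trace identities you record in (i)--(v) and the cancellation you highlight in the ``expected obstacle'' are exactly the computations carried out in the paper.
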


\begin{proof}
Let $\gamma$ be the unique length parametrized $g_\varepsilon$-geodesic between $x_0$ and $x$. Let $X_1,\ldots,X_n$ be a  horizontal orthonormal frame along $\gamma$ such that
\[
\nabla_{\gamma'}X_i+\frac{1}{2\varepsilon} J_{\gamma'} X_i = \hat \nabla^{2\ve}_{\gamma'} X_i =0.
\]
We have
\[
\Delta_\Ho r_\varepsilon(x)=\sum_{i=1}^n \nabla^2  r_\varepsilon (X_i,X_i).
\]
Consider now the vector fields along $\gamma$ defined by
\[
Y_i=\frac{G( s)}{G(r_\varepsilon (x))} X_i, \quad 0\le s \le r_\varepsilon (x).
\]
Using Proposition \ref{index formulas} and the Yang-Mills condition  one finds
\begin{align*}
 & \sum_{i=1}^n I_{\mathcal{H}, \varepsilon} (\gamma,Y_i,Y_i)\\
 &=  \sum_{i=1}^n \int_0^{r_\varepsilon(x)} \left(\| \nabla_{\gamma'}Y_i+\frac{1}{2\varepsilon} J_{\gamma'} Y_i \|_\varepsilon^2 -\langle R(\gamma_\Ho',Y_i)Y_i,\gamma_\Ho'\rangle_\varepsilon+\|T(Y_i,\gamma_\Ho') \|^2_\varepsilon-\frac{1}{4\varepsilon^2} \|J_{\gamma_\V'} Y_i \|_\varepsilon^2 \right) dt \\
 &= \frac{1}{G(r_\varepsilon(x))^2} \int_0^{r_\varepsilon(x)} nG'(s)^2 +\sum_{i=1}^n  G(s)^2 \left( -\langle R(\gamma_\Ho',X_i)X_i,\gamma_\Ho'\rangle_\varepsilon+\|T(X_i,\gamma_\Ho') \|^2_\varepsilon-\frac{1}{4\varepsilon^2} \|J_{\gamma_\V'} X_i \|_\varepsilon^2\right) ds \\
 &= \frac{1}{G(r_\varepsilon(x))^2} \int_0^{r_\varepsilon(x)} nG'(s)^2 +  G(s)^2 \left(  -\mathbf{Ric}_\Ho (\gamma'_\Ho,\gamma'_\Ho) -\frac{1}{\varepsilon} \langle \mathbf{J}^2 \gamma'_\Ho , \gamma'_\Ho \rangle_\Ho+\frac{1}{4\varepsilon^2}\mathbf{Tr}_\mathcal{H} (J^2_{\gamma'_\V})\right) ds \\
 &\le  \frac{1}{G(r_\varepsilon(x))^2} \int_0^{r_\varepsilon(x)} \left(nG'(s)^2-\left[ \left(\rho_1 (s )-\frac{1}{\varepsilon} \kappa(s)  \right) \| \gamma'(s) \|^2_\mathcal{H} +\frac{\rho_2(s)}{\varepsilon^2}   \| \gamma'(s) \|^2_\mathcal{V} \right]G(s)^2 \right)ds.
\end{align*}
From \eqref{observation}, one has
\[
\| \gamma'(s) \|^2_\mathcal{H} =\Gamma(r_\varepsilon)(x), \quad \| \gamma'(s) \|^2_\mathcal{V} =\varepsilon^2 \Gamma^\mathcal{V}(r_\varepsilon)(x),
\]
which completes the proof.
\end{proof}

\begin{remark} \label{rem:sLc}
Since $\Gamma(r_\varepsilon)+\varepsilon \Gamma^\V(r_\varepsilon) =1$, one can rewrite the previous inequality as
\[
\Delta_\Ho r_\varepsilon(x) \le \frac{1}{G(r_\varepsilon(x))^2} \int_0^{r_\varepsilon(x)} \left(nG'(s)^2-\left[ \left(\rho_1 (s )-\frac{1}{\varepsilon}( \kappa(s)+\rho_2(s))  \right)\Gamma(r_\varepsilon)(x)+\frac{\rho_2(s)}{\varepsilon} \right]G(s)^2 \right)ds.
\]
\end{remark}

Optimizing the function $G$ in the previous theorem when $\rho_1,\rho_2, \kappa$ are constants yields:

\begin{corollary}\label{comparison constant}
Assume that the functions $\rho_1,\kappa,\rho_2$ are constant. Then, for $x\neq x_0$ not in the $d_\varepsilon$ cut-locus of $x_0$,
\[
\Delta_\Ho r_\varepsilon(x) \le F( r_\varepsilon(x), \Gamma(r_\varepsilon)(x)),
\]
where
\begin{align*}
F(r,\gamma)=
\begin{cases}
 \sqrt { n \kappa_\varepsilon (\gamma)} \cot(\sqrt { \frac{\kappa_\varepsilon (\gamma)}{n}} r ), &\text{if}\ \kappa_{\varepsilon} (\gamma)>0,
\\
\displaystyle\frac{n}{r}, &\text{if}\ \kappa_{\varepsilon}(\gamma) = 0,
\\
 \sqrt{n |\kappa_\varepsilon (\gamma)|} \coth(\sqrt{\frac{|\kappa_\varepsilon (\gamma)|}{n}} r), &\text{if}\ \kappa_{\varepsilon}(\gamma) <0,
\end{cases}
\end{align*}
and 
\[
\kappa_\varepsilon (\gamma) =\left(\rho_1 -\frac{1}{\varepsilon}( \kappa+\rho_2)  \right)\gamma+\frac{\rho_2}{\varepsilon}, \quad \gamma \in [0,1].
\]
\end{corollary}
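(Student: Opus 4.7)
The plan is to apply Theorem~\ref{comparison 1}, or equivalently the reformulation in Remark~\ref{rem:sLc}, and then optimize the auxiliary function $G$ by calculus of variations. With the constants $\rho_1,\kappa,\rho_2$ plugged in, Remark~\ref{rem:sLc} yields, writing $r=r_\ve(x)$ and $\gamma=\Gamma(r_\ve)(x)\in[0,1]$,
\[
\Delta_\Ho r_\ve(x) \le \frac{1}{G(r)^2}\int_0^r \bigl(nG'(s)^2 - \kappa_\ve(\gamma)G(s)^2\bigr)\,ds,
\]
so the problem reduces to minimizing the right-hand side over admissible $G$. Since both the numerator and $G(r)^2$ scale quadratically, I would normalize $G(r)=1$ and minimize $\int_0^r (nG'^2 - \kappa_\ve G^2)\,ds$ subject to $G(0)=0$, $G(r)=1$.

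The Euler-Lagrange equation for this quadratic functional is the linear ODE $nG''+\kappa_\ve G=0$. The key simplification is that for any solution $G$ of this ODE with $G(0)=0$, an integration by parts gives
\[
\int_0^r \bigl(nG'^2-\kappa_\ve G^2\bigr)\,ds \;=\; \bigl[nGG'\bigr]_0^r - \int_0^r G\bigl(nG''+\kappa_\ve G\bigr)\,ds \;=\; nG(r)G'(r),
\]
so the upper bound collapses to $nG'(r)/G(r)$. Solving the ODE with $G(0)=0$ produces, up to a multiplicative constant, $G(s)=\sin(\sqrt{\kappa_\ve/n}\,s)$ when $\kappa_\ve>0$, $G(s)=s$ when $\kappa_\ve=0$, and $G(s)=\sinh(\sqrt{|\kappa_\ve|/n}\,s)$ when $\kappa_\ve<0$. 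Plugging these into $nG'(r)/G(r)$ gives precisely $\sqrt{n\kappa_\ve}\cot(\sqrt{\kappa_\ve/n}\,r)$, $n/r$, and $\sqrt{n|\kappa_\ve|}\coth(\sqrt{|\kappa_\ve|/n}\,r)$, respectively, which is the claimed function $F(r,\gamma)$.

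The main technical point to verify is the admissibility of the minimizer, namely that the chosen $G$ is strictly positive on $(0,r]$; this is automatic in the zero and negative cases, and in the positive case requires $\sqrt{\kappa_\ve/n}\,r<\pi$. Since $x$ lies outside the $d_\ve$-cut-locus of $x_0$, the geodesic from $x_0$ to $x$ carries no conjugate point, which is equivalent to the first Dirichlet eigenvalue of $-n\partial_s^2-\kappa_\ve$ on $[0,r]$ being positive, and forces $\sqrt{\kappa_\ve/n}\,r<\pi$ precisely when $\kappa_\ve>0$. This ensures the minimizer is admissible and the functional is bounded below, completing the argument.
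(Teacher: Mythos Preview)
Your approach is exactly what the paper intends (it only says ``Optimizing the function $G$ in the previous theorem when $\rho_1,\rho_2,\kappa$ are constants yields''), and the core computation---Euler--Lagrange equation $nG''+\kappa_\ve G=0$, integration by parts to reduce the bound to $nG'(r)/G(r)$, and the explicit minimizers $\sin$, linear, $\sinh$---is correct and is the standard way to carry out this optimization.

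The one step that needs repair is your admissibility paragraph. The absence of conjugate points along the $g_\ve$-geodesic is a statement about the full Jacobi equation of $(\M,g_\ve)$, which involves the actual curvature tensor; it is \emph{not} equivalent to positivity of the first Dirichlet eigenvalue of the scalar operator $-n\partial_s^2-\kappa_\ve(\gamma)$, since $\kappa_\ve(\gamma)$ is assembled from one-sided curvature bounds, not from the curvature itself. The conclusion $\sqrt{\kappa_\ve(\gamma)/n}\,r<\pi$ is nevertheless correct, for a more direct reason that stays entirely within Theorem~\ref{comparison 1}: if $\kappa_\ve(\gamma)>0$ and $r\ge\pi\sqrt{n/\kappa_\ve(\gamma)}$, take $G(s)=\sin(\alpha s)$ with $\alpha\nearrow\pi/r$; then $G>0$ on $(0,r]$, $G(r)\to 0^+$, while a short computation shows $\int_0^r(nG'^2-\kappa_\ve G^2)\,ds$ is of order $G(r)$ (and negative), so the right-hand side of Theorem~\ref{comparison 1} tends to $-\infty$. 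This would force $\Delta_\Ho r_\ve(x)=-\infty$, contradicting the smoothness of $r_\ve$ at a point outside the cut-locus. Hence necessarily $r<\pi\sqrt{n/\kappa_\ve(\gamma)}$, your sine optimizer is admissible, and the proof is complete.
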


In particular, since $\Gamma(r_\varepsilon)$ is always between 0 and 1, we get:

\begin{corollary}\label{comparison constant 2}
Assume that the functions $\rho_1,\kappa,\rho_2$ are constant.  Denote
\[
\kappa_\varepsilon= \min \left(\rho_1-\frac{\kappa}{\varepsilon}, \frac{\rho_2} {\varepsilon} \right).
\]
For $x\neq x_0 \in \M $, not in the $d_\varepsilon$ cut-locus of $x_0$
\begin{equation}\label{lc2}
\Delta_{\mathcal{H}} r_{\varepsilon}(x) \le \begin{cases}
 \sqrt {n\kappa_\varepsilon} \cot(\sqrt {\frac{\kappa_\varepsilon}{n}} r_{\varepsilon}(x)),&\text{if}\ \kappa_{\varepsilon} >0,
\\
\displaystyle\frac{n}{r_{\varepsilon}(x)},&\text{if}\ \kappa_{\varepsilon} = 0,
\\
 \sqrt{n|\kappa_\varepsilon|} \coth(\sqrt{\frac{|\kappa_\varepsilon|}{n}} r_{\varepsilon}(x)),&\text{if}\ \kappa_{\varepsilon} <0.
\end{cases}
\end{equation}
\end{corollary}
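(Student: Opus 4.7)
The plan is to derive this from Corollary \ref{comparison constant} by controlling how the bound depends on $\Gamma(r_\varepsilon)(x)$. First I would note that, as pointed out in Remark \ref{rem:sLc}, the identity $\Gamma(r_\varepsilon)+\varepsilon\Gamma^\V(r_\varepsilon)=1$ forces $\Gamma(r_\varepsilon)(x)\in[0,1]$, so it suffices to find a uniform upper bound for $F(r_\varepsilon(x),\gamma)$ as $\gamma$ ranges over $[0,1]$.

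Next, I would observe that $\gamma \mapsto \kappa_\varepsilon(\gamma)=\bigl(\rho_1-\tfrac{1}{\varepsilon}(\kappa+\rho_2)\bigr)\gamma+\tfrac{\rho_2}{\varepsilon}$ is affine in $\gamma$, so its minimum on $[0,1]$ is attained at an endpoint:
\[
\min_{\gamma\in[0,1]}\kappa_\varepsilon(\gamma)=\min\bigl(\kappa_\varepsilon(0),\kappa_\varepsilon(1)\bigr)=\min\Bigl(\tfrac{\rho_2}{\varepsilon},\,\rho_1-\tfrac{\kappa}{\varepsilon}\Bigr)=\kappa_\varepsilon.
\]
This identifies the scalar $\kappa_\varepsilon$ appearing in the statement as precisely the worst-case curvature parameter over all admissible values of $\Gamma(r_\varepsilon)(x)$.

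The remaining step — and the only one requiring actual work — is to verify that the map $\kappa\mapsto F_\kappa(r)$, extended continuously across all three cases of Corollary \ref{comparison constant}, is monotonically decreasing in $\kappa$ for each fixed $r>0$. For $\kappa>0$, substituting $u=\sqrt{\kappa/n}\,r$ rewrites $F_\kappa(r)=(n/r)\,u\cot u$, and $u\cot u$ is decreasing on $(0,\pi)$; for $\kappa<0$, the same substitution gives $F_\kappa(r)=(n/r)\,u\coth u$ with $u\coth u$ increasing on $(0,\infty)$, hence decreasing as a function of $\kappa<0$; and the two pieces match the value $n/r$ at $\kappa=0$ by Taylor expansion. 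Combining monotonicity with the previous step yields $F(r_\varepsilon(x),\Gamma(r_\varepsilon)(x))\le F_{\kappa_\varepsilon}(r_\varepsilon(x))$, which is exactly the right-hand side of \eqref{lc2}.

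The only conceivable obstacle is the elementary monotonicity verification across the regime change at $\kappa=0$, but this is a routine calculus check; otherwise the corollary follows essentially by plugging in the worst $\gamma\in[0,1]$ into Corollary \ref{comparison constant}.
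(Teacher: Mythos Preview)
Your proposal is correct and matches the paper's approach exactly: the paper simply states ``since $\Gamma(r_\varepsilon)$ is always between 0 and 1'' and deduces the corollary, leaving implicit precisely the monotonicity-in-$\kappa$ argument that you have spelled out. You have filled in the details the paper omits, but the route is the same.
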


We conclude this section with two easy corollaries from our horizontal Laplacian comparison theorem.

\begin{corollary}[Injectivity radius estimate]\label{injectivity}
Assume that the functions $\rho_1,\kappa,\rho_2$ are constant with $\rho_2 >0$. Then, for $x_0 \in \M$ the $d_\varepsilon$ distance of $x_0$ to its cut-locus is less than $\pi \sqrt{\frac{n\varepsilon}{\rho_2} }$.
\end{corollary}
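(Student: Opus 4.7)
The plan is to exhibit a cut point along a purely vertical $g_\varepsilon$-geodesic emanating from $x_0$ at distance strictly smaller than $\pi\sqrt{n\varepsilon/\rho_2}$, by specializing Corollary~\ref{comparison constant} to the case $\Gamma(r_\varepsilon)=0$ and driving the right-hand side to $-\infty$. Since the leaves have dimension $m\ge 1$, I pick a $g_\varepsilon$-unit vector $v\in \V_{x_0}$ and set $\gamma(t)=\exp_{x_0}^{g_\varepsilon}(tv)$, the unit-speed $g_\varepsilon$-geodesic it generates.

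The first key step is to verify that $\gamma'(t)$ remains vertical for all $t$. By convention $J_Z=0$ on $\V$, so whenever $\gamma'$ is vertical we have $J_{\gamma'}\gamma'=0$ and the geodesic equation from the lemma just above Proposition~\ref{index formulas} reduces to $\nabla_{\gamma'}\gamma'=0$. Since the Bott connection preserves the splitting $\Ho\oplus\V$, the vertical character of $\gamma'$ is propagated by this ODE (equivalently, $\gamma$ stays in the leaf through $x_0$, which is totally geodesic). This verticality propagation is the main point to make carefully; everything afterwards is more mechanical.

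Next, I would show that $\Gamma(r_\varepsilon)(\gamma(t))=0$ as long as $\gamma(t)$ lies outside the $d_\varepsilon$ cut-locus of $x_0$. Indeed, since $g$ and $g_\varepsilon$ coincide on $\Ho$, the horizontal gradients agree: $\nabla_\Ho r_\varepsilon=\pi_\Ho(\nabla^{g_\varepsilon}r_\varepsilon)$. Off the cut-locus, $\nabla^{g_\varepsilon}r_\varepsilon(\gamma(t))=\gamma'(t)$, which is vertical by the previous step; hence $\nabla_\Ho r_\varepsilon(\gamma(t))=0$ and $\Gamma(r_\varepsilon)(\gamma(t))=0$. (This is also consistent with \eqref{observation}, which guarantees that $\Gamma(r_\varepsilon)$ is constant along $g_\varepsilon$-geodesics from $x_0$.)

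Finally, I apply Corollary~\ref{comparison constant} at $\gamma(t)$ with $\gamma=0$, so that $\kappa_\varepsilon(0)=\rho_2/\varepsilon>0$ and
\[
\Delta_\Ho r_\varepsilon(\gamma(t))\ \le\ \sqrt{n\rho_2/\varepsilon}\,\cot\!\big(\sqrt{\rho_2/(n\varepsilon)}\,t\big).
\]
The right-hand side tends to $-\infty$ as $t\nearrow \pi\sqrt{n\varepsilon/\rho_2}$, whereas $\Delta_\Ho r_\varepsilon$ is smooth (hence finite) at every non-cut point. Therefore $\gamma$ must meet $\mathbf{Cut}_\varepsilon(x_0)$ at some $t_0<\pi\sqrt{n\varepsilon/\rho_2}$, yielding the desired strict injectivity radius bound.
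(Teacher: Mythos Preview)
Your proposal is correct and follows essentially the same approach as the paper's own proof: both pick a vertical $g_\varepsilon$-geodesic in the leaf through $x_0$, invoke Corollary~\ref{comparison constant} with $\Gamma(r_\varepsilon)=0$ so that $\kappa_\varepsilon(0)=\rho_2/\varepsilon$, and conclude from the blow-up of $\sqrt{n\rho_2/\varepsilon}\cot(\sqrt{\rho_2/(n\varepsilon)}\,t)$ that a cut point must occur before $t=\pi\sqrt{n\varepsilon/\rho_2}$. Your write-up is in fact more careful than the paper's in justifying why the geodesic stays vertical (via $\nabla_{\gamma'}\gamma'=0$ and the Bott connection preserving $\V$) and why $\Gamma(r_\varepsilon)$ vanishes along it.
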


\begin{proof}
Let $x_0 \in \M$. Let us denote by $\mathcal{L}_{x_0}$ the leaf going through $x_0$ and consider a $g_\varepsilon$ length parametrized geodesic $\gamma$ in $\mathcal{L}_{x_0}$ such that $\gamma(0)=x_0$. From Corollary \ref{comparison constant} one has
\[
\Delta_\Ho r_\varepsilon( \gamma(s)) \le  \sqrt { \frac{n \rho_2}{\varepsilon} } \cot \left(\sqrt { \frac{\rho_2}{n\varepsilon}} s \right).
\]
One deduces that
\[
\lim_{s \to \pi \sqrt{\frac{n\varepsilon}{\rho_2} }} \Delta_\Ho r_\varepsilon( \gamma(s))=-\infty.
\]
Therefore, $ r_\varepsilon (\gamma(s))$ can not be differentiable at  $s = \pi \sqrt{\frac{n\varepsilon}{\rho_2} }$. We deduce that  the $d_\varepsilon$ distance of $x_0$ to its cut-locus is less than $\pi \sqrt{\frac{n\varepsilon}{\rho_2} }$. 
\end{proof}

\begin{remark}
A first version of this theorem is proved in \cite{BD} in the case where the foliation is the Reeb foliation of a Sasakian manifold (in that case $\rho_2=\frac{n}{4}$).
\end{remark}

\begin{corollary}[Bonnet-Myers type theorem]\label{BMyers}
Assume that the functions $\rho_1,\kappa,\rho_2$ are constant with $\rho_1, \rho_2 >0$, then $\M$ is compact. Moreover, for $\varepsilon > \frac{\kappa}{\rho_1}$,
\[
\mathbf{diam} ( \M , d_\varepsilon) \le \pi \sqrt{ \frac{n}{\kappa_\varepsilon}},
\]
where
\[
\kappa_\varepsilon= \min \left(\rho_1-\frac{\kappa}{\varepsilon}, \frac{\rho_2} {\varepsilon} \right).
\]
\end{corollary}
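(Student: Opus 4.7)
The plan is to carry out the classical Myers argument with the horizontal Laplacian comparison of Corollary \ref{comparison constant 2} replacing the usual Ricci comparison. Under the hypotheses $\rho_1,\rho_2>0$ and $\varepsilon>\kappa/\rho_1$, both $\rho_1-\kappa/\varepsilon$ and $\rho_2/\varepsilon$ are strictly positive, so $\kappa_\varepsilon>0$ and the corollary gives, at every $x\neq x_0$ outside the $d_\varepsilon$-cut-locus of $x_0$,
\[
\Delta_\Ho r_\varepsilon(x) \le \sqrt{n\kappa_\varepsilon}\,\cot\!\left(\sqrt{\kappa_\varepsilon/n}\,r_\varepsilon(x)\right).
\]
Before applying this I would first observe that $g_\varepsilon$ inherits completeness from $g$, since $\|v\|_{g_\varepsilon}^2\ge \min(1,\varepsilon^{-1})\|v\|_g^2$ yields $d_\varepsilon\ge \sqrt{\min(1,\varepsilon^{-1})}\,d$, so every $d_\varepsilon$-Cauchy sequence is $d$-Cauchy and therefore converges. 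Hopf-Rinow then applies to $(\M,g_\varepsilon)$.

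Next, fix $x_0,y\in\M$ and let $\gamma:[0,L]\to\M$ be a unit-speed minimizing $g_\varepsilon$-geodesic from $x_0$ to $y$, with $L=d_\varepsilon(x_0,y)$. For each $s\in(0,L)$, the sub-arc $\gamma|_{[0,s]}$ is a minimizer from $x_0$ to $\gamma(s)$; it is free of conjugate points (otherwise $\gamma$ would fail to minimize past $s$) and unique (otherwise concatenation with $\gamma|_{[s,L]}$ would produce a non-smooth minimizer from $x_0$ to $y$). Hence $\gamma(s)\notin\mathbf{Cut}_\varepsilon(x_0)$ and $r_\varepsilon(\gamma(s))=s$, so the comparison bound applies along $\gamma$. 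Suppose for contradiction that $L>\pi\sqrt{n/\kappa_\varepsilon}$. Letting $s\uparrow \pi\sqrt{n/\kappa_\varepsilon}$ with $s<L$, the right-hand side above diverges to $-\infty$, forcing $\Delta_\Ho r_\varepsilon(\gamma(s))\to-\infty$. However at $s_*=\pi\sqrt{n/\kappa_\varepsilon}<L$ the point $\gamma(s_*)$ still lies strictly inside the minimizer and hence in $\M\setminus\mathbf{Cut}_\varepsilon(x_0)$, where $r_\varepsilon$ is smooth by Lemma \ref{cutlocus}; consequently $\Delta_\Ho r_\varepsilon(\gamma(s))$ has a finite limit as $s\uparrow s_*$, contradicting the divergence. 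Hence $L\le \pi\sqrt{n/\kappa_\varepsilon}$, and since $x_0,y$ were arbitrary this is the desired diameter bound.

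Compactness of $\M$ then follows at once: $(\M,g_\varepsilon)$ is complete and connected with finite $d_\varepsilon$-diameter, so $\M$ coincides with a closed $d_\varepsilon$-ball of radius $\pi\sqrt{n/\kappa_\varepsilon}$ about any chosen $x_0$, which is compact by Hopf-Rinow. I do not anticipate any real obstacle—the only slightly delicate point is the blow-up contradiction, which amounts to the elementary observation that smoothness of $r_\varepsilon$ on $\M\setminus\mathbf{Cut}_\varepsilon(x_0)$ prevents $\Delta_\Ho r_\varepsilon$ from escaping to $-\infty$ there.
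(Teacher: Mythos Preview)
Your proof is correct and follows essentially the same strategy as the paper: both use the horizontal Laplacian comparison of Corollary~\ref{comparison constant 2} with $\kappa_\varepsilon>0$ and derive a contradiction from the blow-up of $\sqrt{n\kappa_\varepsilon}\cot(\sqrt{\kappa_\varepsilon/n}\,r_\varepsilon)$ at $r_\varepsilon=\pi\sqrt{n/\kappa_\varepsilon}$. The paper phrases the contradiction via Calabi's lemma and then uses a density argument to handle points on the cut-locus, whereas you work directly along a minimizing geodesic and use that its interior points are never cut points; this is a cosmetic difference, and your version is arguably slightly cleaner since it bypasses the separate density step.
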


\begin{proof}
Let $x_0 \in \M$ and $\varepsilon > \frac{\kappa}{\rho_1}$. From Corollary \ref{comparison constant 2}, one has for $x\neq x_0  $, not in the $d_\varepsilon$ cut-locus of $x_0$
\begin{equation*}
\Delta_{\mathcal{H}} r_{\varepsilon}(x) \le \sqrt {n\kappa_\varepsilon} \cot \left(\sqrt {\frac{\kappa_\varepsilon}{n}} r_{\varepsilon}(x)\right).
\end{equation*}
We deduce from Calabi's lemma that any point $x$ such that $d_\varepsilon (x_0,x) \ge \pi \sqrt{ \frac{n}{\kappa_\varepsilon}}$ has to be in the cut-locus of $x_0$.
Let now $x \in \M$ arbitrary. If $x$ is not in the cut-locus of $x_0$, then $d(x_0,x) < \pi \sqrt{ \frac{n}{\kappa_\varepsilon}}$. If $x$ is in the cut-locus of $x_0$ then for every $\eta >0$ there is at least one point $y$ in the open ball with center $x$ and radius $\eta$ such that $y$ is not in the cut-locus of $x_0$. Thus $d(x_0 , x)\le \pi \sqrt{ \frac{n}{\kappa_\varepsilon}} +\eta$.
\end{proof}

\begin{remark}
The first Bonnet-Myers theorem in that situation was proved in \textup{\cite{BG}} by using heat equation methods.  It was proved that the sub-Riemannian diameter of $\M$ satisfies the bound
\begin{align*} 
\mathbf{diam} ( \M , d_0) \le 2\sqrt{3} \pi \sqrt{
\frac{\kappa+\rho_2}{\rho_1\rho_2} \left(
1+\frac{3\kappa}{2\rho_2}\right)n }.
\end{align*}

\end{remark}
\subsection{Vertical Laplacian comparison theorem}

Let $x_0\in \M$ be fixed and, as before, for $\varepsilon >0$ denote
\[
r_\varepsilon (x) =d_\varepsilon (x_0,x).
\]
We assume  that globally on $\M$, for every $Z \in \Gamma^\infty(\mathcal{V})$,
\[
\mathbf{Ric}_\V (Z,Z) \ge  \rho_3 (r_\varepsilon )  \| Z \|^2_{\mathcal{V}},
\]
where $\mathbf{Ric}_\V $ is the vertical Ricci curvature of the Bott connection (this is also the Ricci curvature of the leaves of the foliation as sub-manifolds of $(\M,g)$), and where $\rho_3$ is some continuous function.

\begin{theorem}\label{vertical comparison}
Let $x \in \M$, $x \neq x_0$, not in the $d_\varepsilon$ cut-locus of $x_0$. Let $G:[0,r_\varepsilon(x)]\to \mathbb{R}_{\ge 0}$ be a differentiable function which is positive on $(0,r_\varepsilon(x)]$ and such that $ G(0)=0$. We have
\[
\Delta_\V r_\varepsilon(x) \le \frac{1}{G(r_\varepsilon(x))^2} \int_0^{r_\varepsilon(x)} \left(\frac{m}{\varepsilon}G'(s)^2-\rho_3(s) \varepsilon \Gamma^\mathcal{V}(r_\varepsilon)(x)G(s)^2 \right)ds.
\]
\end{theorem}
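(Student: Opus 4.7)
The plan is to mirror the proof of Theorem~\ref{comparison 1}, replacing the horizontal index formula by the vertical one from Proposition~\ref{index formulas}(b). First, let $\gamma:[0,r_\varepsilon(x)]\to\M$ denote the unique unit-speed length-minimizing $g_\varepsilon$-geodesic from $x_0$ to $x$. Because the Bott connection $\nabla$ is metric and preserves $\V$, one may choose a $g$-orthonormal vertical frame $Z_1,\dots,Z_m$ along $\gamma$ satisfying $\nabla_{\gamma'}Z_i=0$ for every $i$, and set $\tilde Z_i(s)=\frac{G(s)}{G(r_\varepsilon(x))}Z_i(s)$. Each $\tilde Z_i$ is vertical along $\gamma$, vanishes at $s=0$, and equals $Z_i(r_\varepsilon(x))$ at $s=r_\varepsilon(x)$, so it is admissible in the vertical index inequality.

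Summing Proposition~\ref{index formulas}(b) over $i$ then yields
\[
\Delta_\V r_\varepsilon(x)=\sum_{i=1}^m \nabla^2 r_\varepsilon(Z_i,Z_i)\le \sum_{i=1}^m I_{\V,\varepsilon}(\tilde Z_i,\tilde Z_i) = \frac{1}{\varepsilon G(r_\varepsilon(x))^2}\int_0^{r_\varepsilon(x)}\left( m G'(s)^2 + G(s)^2 \sum_{i=1}^m \langle R(\gamma',Z_i)\gamma',Z_i\rangle_g \right) ds,
\]
since $\|\nabla_{\gamma'}\tilde Z_i\|_g^2 = G'(s)^2 / G(r_\varepsilon(x))^2$ by the parallelism of $Z_i$ and $\|Z_i\|_g=1$.

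To close the argument, I would identify the curvature trace with the vertical Ricci of $\nabla$. Because $\nabla$ preserves $\Ho$ and $\V$, each operator $R(\gamma',Z_i)$ also preserves this splitting, so $\langle R(\gamma',Z_i)\gamma'_\Ho,Z_i\rangle_g=0$ for every $i$. The remaining trace should reduce to $-\mathbf{Ric}_\V(\gamma'_\V,\gamma'_\V)$ by a computation in the spirit of the one that gives $\mathbf{Ric}_\Ho(X,Y)=\mathbf{Ric}_\Ho(\pi_\Ho X,\pi_\Ho Y)$ (exploiting the first Bianchi identity together with the totally geodesic and Yang-Mills hypotheses). Combined with the lower bound $\mathbf{Ric}_\V(Z,Z)\ge \rho_3(r_\varepsilon)\|Z\|_\V^2$ and with the identity $\|\gamma'_\V(s)\|_g^2 = \varepsilon^2\,\Gamma^\V(r_\varepsilon)(x)$ (the vertical analogue of the observation used at the end of the proof of Theorem~\ref{comparison 1}), this produces
\[
\sum_{i=1}^m \langle R(\gamma',Z_i)\gamma',Z_i\rangle_g \le -\rho_3(s)\,\varepsilon^2\,\Gamma^\V(r_\varepsilon)(x),
\]
and substituting back, after cancelling a factor of $\varepsilon$, gives the stated inequality. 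The main obstacle is this last reduction: cleanly discarding the cross terms in $\gamma'_\Ho$ from the trace $\sum_i \langle R(\gamma',Z_i)\gamma'_\V,Z_i\rangle_g$, which requires a symmetry argument specific to the Bott curvature on totally geodesic Riemannian foliations rather than a Levi-Civita-type pair swap.
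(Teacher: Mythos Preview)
Your approach is exactly the one the paper has in mind: its entire proof reads ``The proof is similar to that of Theorem~\ref{comparison 1},'' and you have carried out precisely that parallel argument using the vertical index formula of Proposition~\ref{index formulas}(b) with a $\nabla$-parallel $g$-orthonormal vertical frame.

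The one point you leave open---the vanishing of the cross term $\langle R(\gamma'_\Ho,Z_i)\gamma'_\V,Z_i\rangle_g$---does not require Yang--Mills or anything delicate. Since $T$ vanishes whenever one argument is vertical and $\nabla$ preserves $\V$, the first Bianchi identity for the Bott connection reduces to the torsion-free form $R(X,Y)Z+R(Y,Z)X+R(Z,X)Y=0$ whenever at least two of $X,Y,Z$ are vertical. Applying this with $X=\gamma'_\Ho$, $Y=Z_i$, $Z=\gamma'_\V$ and pairing with $Z_i$ kills the other two cyclic terms (one lands in $\Ho$, the other is $\langle R(\cdot,\cdot)Z_i,Z_i\rangle=0$ by skew-symmetry), so the cross term is identically zero. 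The remaining purely vertical trace is then $-\mathbf{Ric}_\V(\gamma'_\V,\gamma'_\V)$, since on $\V$ the Bott connection restricts to the Levi-Civita connection of the leaves, and the rest of your argument goes through verbatim.
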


The proof is similar to that of Theorem~\ref{comparison 1}. As an immediate corollary we deduce:

\begin{corollary}\label{vertical comparison 2}
Assume that the function $\rho_3$ is constant. Then, for $x \neq x_0$ not in the cut-locus  of $x_0$,
\[
\Delta_\V r_\varepsilon(x) \le  F(r_\varepsilon (x),\Gamma^\mathcal{V}(r_\varepsilon)(x))
\]
where
\begin{align*}
F(r_\varepsilon,\Gamma^\mathcal{V}(r_\varepsilon))=
\begin{cases}
 \sqrt {m \rho_3  \Gamma^\mathcal{V}(r_\varepsilon)} \cot\left(\sqrt { \frac{\rho_3\varepsilon^2 \Gamma^\mathcal{V}(r_\varepsilon)}{m}} r_\varepsilon \right),&\text{if} \quad  \rho_3 >0,
\\
\displaystyle\frac{m}{\varepsilon r_\varepsilon},&\text{if} \quad \rho_3 = 0,
\\
 \sqrt {- m\rho_3  \Gamma^\mathcal{V}(r_\varepsilon)} \coth\left(\sqrt {- \frac{\rho_3\varepsilon^2 \Gamma^\mathcal{V}(r_\varepsilon)}{m}} r_\varepsilon \right),&\text{if} \quad \rho_3 <0.
\end{cases}
\end{align*}
\end{corollary}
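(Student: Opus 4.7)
The corollary is a direct application of Theorem \ref{vertical comparison} with the optimal choice of comparison function $G$, exactly mirroring how Corollary \ref{comparison constant} is extracted from Theorem \ref{comparison 1}. The plan is to substitute into the integral bound an explicit $G$ that makes the integrand a perfect derivative, thereby reducing the right-hand side to an elementary trigonometric/hyperbolic expression.

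First, setting $\Gamma^{\V} := \Gamma^{\mathcal{V}}(r_\varepsilon)(x)$ (a constant along the geodesic by the analogue of \eqref{observation}), rewrite the bound of Theorem \ref{vertical comparison} as
\[
\Delta_\V r_\varepsilon(x) \le \frac{1}{G(r_\varepsilon(x))^2} \int_0^{r_\varepsilon(x)} \left( \tfrac{m}{\varepsilon} G'(s)^2 - \rho_3 \varepsilon \Gamma^{\V} G(s)^2 \right) ds.
\]
The Euler--Lagrange equation for the integrand is $\tfrac{m}{\varepsilon} G''(s) + \rho_3 \varepsilon \Gamma^{\V} G(s) = 0$, which suggests setting $k := \rho_3 \varepsilon^2 \Gamma^{\V}/m$ and choosing
\[
G(s) = \begin{cases} \sin(\sqrt{k}\,s), & \rho_3 > 0, \\ s, & \rho_3 = 0, \\ \sinh(\sqrt{-k}\,s), & \rho_3 < 0. \end{cases}
\]
In each case $G(0)=0$ and $G>0$ on $(0,r_\varepsilon(x)]$, provided in the positive case $\sqrt{k}\, r_\varepsilon(x) < \pi$; otherwise the stated $\cot$ is non-positive and the comparison bound is trivial (or already violated by the prior injectivity-type reasoning).

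Next I plug the chosen $G$ into the integrand. In the case $\rho_3 > 0$, one finds $\tfrac{m}{\varepsilon} G'(s)^2 = \rho_3 \varepsilon \Gamma^{\V} \cos^2(\sqrt{k}\,s)$, hence the integrand collapses to $\rho_3 \varepsilon \Gamma^{\V} \cos(2\sqrt{k}\,s)$, which integrates exactly to $\sqrt{m\rho_3\Gamma^{\V}}\,\sin(\sqrt{k}\,r_\varepsilon(x))\cos(\sqrt{k}\,r_\varepsilon(x))$; division by $G(r_\varepsilon(x))^2 = \sin^2(\sqrt{k}\,r_\varepsilon(x))$ yields the announced $\sqrt{m\rho_3 \Gamma^{\V}}\cot(\sqrt{\rho_3 \varepsilon^2 \Gamma^{\V}/m}\,r_\varepsilon(x))$. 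The $\rho_3 = 0$ case is immediate ($G(s)=s$ gives integrand $m/\varepsilon$ and ratio $m/(\varepsilon r_\varepsilon(x))$), and the $\rho_3 < 0$ case proceeds identically with $\sin,\cos$ replaced by $\sinh,\cosh$, producing the $\coth$ expression.

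There is essentially no obstacle beyond bookkeeping: the only point of care is to verify that the chosen $G$ satisfies the positivity hypothesis of Theorem \ref{vertical comparison} (which in the positive-$\rho_3$ case forces an implicit restriction $\sqrt{\rho_3 \varepsilon^2 \Gamma^{\V}/m}\,r_\varepsilon(x) < \pi$, analogous to what drives Corollary \ref{injectivity}), and to keep track of the constants when simplifying $\rho_3\varepsilon\Gamma^{\V}/\sqrt{k} = \sqrt{m\rho_3 \Gamma^{\V}}$.
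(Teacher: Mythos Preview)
Your proposal is correct and follows exactly the approach the paper has in mind: the paper presents this corollary with no proof beyond ``As an immediate corollary we deduce,'' and the optimization of $G$ you carry out is precisely the vertical analogue of the passage from Theorem~\ref{comparison 1} to Corollary~\ref{comparison constant}. Your bookkeeping of the constants and the positivity caveat on $G$ in the $\rho_3>0$ case are both appropriate.
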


\subsection{Horizontal and vertical Bochner formulas and Laplacian comparison theorems}
\label{ssec:HorVerComp}

It is well-known that on Riemannian manifolds the Laplacian comparison theorem may also be obtained as a consequence of the Bochner formula. In this section, we show that Theorems \ref{comparison 1} and \ref{vertical comparison} may also be obtained as a consequence of Bochner type identities. The methods developed in the previous sections are more powerful to understand second derivatives of the distance functions (see Section~\ref{sec:HTypeComp}), but using Bochner type identities and the resulting curvature dimension estimates has the advantage to be applicable in more general situations (see \cite{BG,GT1,GT2} for the general framework on curvature dimension inequalities).

\

We first recall the horizontal and vertical Bochner identities that were respectively proved in \cite{BKW} and \cite{BB} (see also \cite{GT1,GT2} for generalizations going beyond the foliation case).

\begin{theorem}[Horizontal and vertical Bochner identities]\label{Bochner2}
For $f \in C^\infty (\M)$, one has
\[
\frac{1}{2} \Dh \| df \|_{\varepsilon}^2 -\langle d \Dh f , df \rangle_{\varepsilon} =  \| \nabla^\varepsilon_{\mathcal{H}} df  \|_{\varepsilon}^2 + \left\langle \mathbf{Ric}_{\mathcal{H}} (df), df \right\rangle_\mathcal{H} +\frac{1}{\varepsilon} \langle \mathbf{J}^2 (df) , df \rangle_\mathcal{H},
\]
and
\[
\frac{1}{2} \Dv \| df \|_{\varepsilon}^2 -\langle d \Dv f , df \rangle_{\varepsilon} =  \| \nabla_{\Ho,\V}^2 f \|^2+\varepsilon  \| \nabla_\V^2 f \|^2 +\varepsilon  \left\langle \mathbf{Ric}_{\mathcal{V}} (df), df \right\rangle_\mathcal{V}.
\]

\end{theorem}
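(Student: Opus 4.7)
The plan is to prove both identities by a direct pointwise computation in an orthonormal frame adapted to the splitting $\Ho \oplus \V$ and parallel at the base point with respect to the Bott connection. Since both $\nabla^\varepsilon$ and $\hat{\nabla}^\varepsilon$ are metric for $g_\varepsilon$, the standard apparatus of covariant calculus applies; using $\nabla^\varepsilon$ is the convenient way to isolate the $\frac{1}{\varepsilon}$-dependence cleanly.

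\textbf{Horizontal identity.} Fix $x_0 \in \M$ and choose $g$-orthonormal frames $X_1,\dots,X_n$ of $\Ho$ and $Z_1,\dots,Z_m$ of $\V$, both parallel at $x_0$ for the Bott connection; this is possible because $\nabla$ preserves the splitting. At $x_0$, expand
\[
\|df\|_\varepsilon^2 = \sum_{j=1}^n (X_j f)^2 + \varepsilon \sum_{\ell=1}^m (Z_\ell f)^2,
\]
and apply $\Dh$, which at $x_0$ reduces to $\sum_i X_i X_i$ (the lower-order correction disappears thanks to the totally-geodesic and Yang-Mills assumptions together with the parallelism of the frame). Two types of terms arise: squared second derivatives of $f$, which reassemble exactly into $\|\nabla^\varepsilon_\Ho df\|_\varepsilon^2$ after matching connection symbols, and products of first derivatives with third derivatives, which must be reorganized via the commutator identity
\[
[\nabla^\varepsilon_X, \nabla^\varepsilon_Y] = R^\varepsilon(X,Y) + \nabla^\varepsilon_{T^\varepsilon(X,Y)}
\]
to produce the term $\langle d\Dh f, df\rangle_\varepsilon$ that we want to subtract. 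The residual curvature trace yields the horizontal Ricci of the Bott connection; the torsion corrections from horizontal-horizontal commutators are killed by the Yang-Mills condition $\delta_\Ho T = 0$; and the mixed commutators $[X_i, Z_\ell]$ contribute the $\frac{1}{\varepsilon}\langle \mathbf{J}^2 df, df\rangle_\Ho$ piece via the defining relation of $J_{Z_\ell}$ and the explicit formula for $T^\varepsilon$.

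\textbf{Vertical identity.} The argument mirrors the previous one with $\Dv = \sum_\ell Z_\ell Z_\ell$ at $x_0$, which is legitimate since the leaves are totally geodesic submanifolds. The squared-second-derivative pieces now regroup into $\|\nabla^2_{\Ho,\V} f\|^2 + \varepsilon \|\nabla^2_\V f\|^2$, the mixed piece coming from $(Z_\ell X_i f)^2$ and the purely vertical piece from $\varepsilon(Z_\ell Z_{\ell'} f)^2$. A key simplification is that $\V$ is integrable by Frobenius, so $[Z_\ell, Z_{\ell'}]$ stays vertical and no $\mathbf{J}^2$ contribution arises from vertical-vertical commutators. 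The trace of curvature in the vertical directions delivers $\varepsilon \langle \mathbf{Ric}_\V df, df\rangle_\V$, the $\varepsilon$ prefactor reflecting that $\|\pi_\V df\|_\varepsilon^2 = \varepsilon \|\pi_\V df\|_g^2$.

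\textbf{Main obstacle.} The principal difficulty is the systematic bookkeeping of torsion and curvature when moving from the Bott connection to $\nabla^\varepsilon$. Since $\nabla^\varepsilon$ is neither torsion-free nor splitting-preserving, one must verify carefully that every residual $\nabla T$ or $T$-linear term either cancels under Yang-Mills or reassembles into the stated $\mathbf{J}^2$ correction with the correct sign and $\varepsilon$-power. A secondary technicality is the identification of $\Dh$ and $\Dv$ with the $\Ho$- and $\V$-traces of the Bott Hessian at $x_0$, which relies on the totally-geodesic condition for the vertical part and on Yang-Mills for the horizontal part. Since the statement is attributed to \cite{BKW} and \cite{BB}, one could simply cite those references directly; the benefit of the organization through $\nabla^\varepsilon$ adopted here is that it renders the $\varepsilon$-dependence transparent, which will be crucial for the Hessian comparison arguments of Section~\ref{sec:HTypeComp}.
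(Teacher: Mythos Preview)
The paper's own proof of this theorem consists entirely of two citations: the horizontal identity is quoted as Theorem~3.1 of \cite{BKW} and the vertical one as a consequence of Proposition~2.2 of \cite{BB}. No computation is performed in the paper itself. You already noticed this and remark that one could simply cite those references; that is exactly what the paper does.

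Your proposal instead sketches the underlying frame computation that those references presumably carry out. The outline is reasonable---expanding $\|df\|_\ve^2$ in a Bott-parallel adapted frame, commuting derivatives, and collecting curvature and torsion terms is the standard route to Bochner-type formulas, and your identification of where Yang--Mills enters (to kill the $\delta_\Ho T$ contribution) and where integrability of $\V$ enters (no horizontal piece in $[Z_\ell,Z_{\ell'}]$) is correct in spirit. However, the proposal remains at the level of a strategy: phrases like ``reassemble exactly into $\|\nabla^\ve_\Ho df\|_\ve^2$ after matching connection symbols'' and ``one must verify carefully that every residual $\nabla T$ or $T$-linear term either cancels under Yang--Mills or reassembles into the stated $\mathbf{J}^2$ correction'' are precisely the places where the actual work lies, and you have not done that work. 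In particular, the claim that the second-order terms regroup into $\|\nabla^\ve_\Ho df\|_\ve^2$ (rather than, say, $\|\nabla^2_\Ho f\|^2$ plus a separate $J$-term) is the delicate point, since $\nabla^\ve$ does not preserve the splitting and its action on $df$ mixes horizontal and vertical components in an $\ve$-dependent way.

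If your intent is to match the paper, the two citations suffice. If your intent is to provide a self-contained proof, the sketch would need to be expanded into a genuine computation, with the torsion and curvature bookkeeping written out explicitly.
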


\begin{proof}
The first identity is Theorem 3.1 in \cite{BKW}. The second identity may derived from Proposition 2.2 in \cite{BB}.
\end{proof}

Those two Bochner formulas may be used to prove general curvature dimension estimates respectively for the horizontal and vertical Laplacian.

We introduce the following operators defined for $f,g \in C^\infty(\M)$,
\begin{align*}
&\Gamma(f,g)=\frac{1}{2} ( \Dh (fg) -g\Dh f-f\Dh g)=\langle \nabla_\mathcal{H} f , \nabla_\mathcal{H} g\rangle_\mathcal{H},\\
&\Gamma^\mathcal{V} (f,g)=\langle \nabla_\mathcal{V} f , \nabla_\mathcal{V} g\rangle_\mathcal{V},
\end{align*}
and their iterations which are defined by
\begin{align*}
\Gamma^\mathcal{H}_2(f,g)&=\frac{1}{2} ( \Dh(\Gamma(f,g)) -\Gamma(g,\Dh f)-\Gamma(f,\Dh g))\\
\Gamma^{\Ho,\mathcal{V}}_2(f,g)&=\frac{1}{2} ( \Dh (\Gamma^\mathcal{V}(f,g)) -\Gamma^\mathcal{V}(g,\Dh f)-\Gamma^\mathcal{V}(f,\Dh g))\\
\Gamma^{\mathcal{V},\Ho}_2(f,g)&=\frac{1}{2} ( \Dv (\Gamma(f,g)) -\Gamma(g,\Dv f)-\Gamma(f,\Dv g))
\end{align*}
and
\[
\Gamma^\mathcal{V}_2(f,g)=\frac{1}{2} ( \Dv(\Gamma^\V(f,g)) -\Gamma^\V(g,\Dv f)-\Gamma^\V(f,\Dv g))
\]
As a straightforward consequence of Theorem \ref{Bochner2}, we obtain the following generalized curvature dimension inequalities for the horizontal and vertical Laplacians.

\begin{theorem}\label{CD}\ 

\begin{enumerate}[\rm 1.]
\item Assume that globally on $\M$, for every $X \in \Gamma^\infty(\mathcal{H})$ and $Z \in \Gamma^\infty(\mathcal{V})$,
\[
\mathbf{Ric}_\Ho (X,X) \ge  \rho_1 (r_\varepsilon )  \| X \|^2_{\mathcal{H}}, \quad -\langle \mathbf{J}^2 X, X  \rangle_\mathcal{H} \le \kappa (r_\varepsilon) \| X \|^2_\mathcal{H},\quad -\frac{1}{4} \mathbf{Tr}_\mathcal{H} (J^2_{Z})\ge \rho_2(r_\varepsilon) \| Z \|^2_\mathcal{V},
\]
for some continuous functions $\rho_1,\rho_2,\kappa$. For every $f \in C^\infty(\M)$, one has
\[
\Gamma^\Ho_2(f,f)+\varepsilon \Gamma^{\Ho,\mathcal{V}}_2(f,f)\ge \frac{1}{n} (\Dh f)^2 +\left( \rho_1 (r_\varepsilon) -\frac{\kappa(r_\varepsilon)}{\varepsilon}\right) \Gamma(f,f)+\rho_2 (r_\varepsilon) \Gamma^\mathcal{V} (f,f).
\]
\item Assume  that globally on $\M$, for every $Z \in \Gamma^\infty(\mathcal{V})$,
\[
\mathbf{Ric}_\V (Z,Z) \ge  \rho_3 (r_\varepsilon )  \| Z \|^2_{\mathcal{V}}, 
\]
for some continuous functions $\rho_3$. For every $f \in C^\infty(\M)$ one has
\[
 \Gamma^{\mathcal{V},\Ho}_2(f,f)+\varepsilon \Gamma^\V_2(f,f)\ge \frac{\varepsilon}{m} (\Dv f)^2 +\varepsilon \rho_3 (r_\varepsilon) \Gamma^\mathcal{V} (f,f).
\]
\end{enumerate}
\end{theorem}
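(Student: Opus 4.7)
The plan is to extract each inequality directly from the corresponding Bochner identity in Theorem~\ref{Bochner2}: identify the LHS via the splitting $\|df\|_\varepsilon^2 = \Gamma(f,f) + \varepsilon\Gamma^\V(f,f)$, apply Cauchy--Schwarz to the Hessian norm term on the RHS, and plug in the curvature hypotheses for the remaining pieces.

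For Part 1, the expansions $\|df\|_\varepsilon^2 = \Gamma(f,f) + \varepsilon\Gamma^\V(f,f)$ and $\langle d\Dh f, df\rangle_\varepsilon = \Gamma(f,\Dh f) + \varepsilon\Gamma^\V(f,\Dh f)$ identify the LHS of the horizontal Bochner formula with $\Gamma^\Ho_2(f,f) + \varepsilon\Gamma^{\Ho,\V}_2(f,f)$. The hypotheses on $\mathbf{Ric}_\Ho$ and $\mathbf{J}^2$ yield $\langle \mathbf{Ric}_\Ho df, df\rangle_\Ho + \varepsilon^{-1}\langle \mathbf{J}^2 df, df\rangle_\Ho \ge (\rho_1 - \kappa/\varepsilon)\Gamma(f,f)$. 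What remains is the bound $\|\nabla^\varepsilon_\Ho df\|_\varepsilon^2 \ge \frac{1}{n}(\Dh f)^2 + \rho_2\Gamma^\V(f,f)$, which is where the hypothesis on $-\mathbf{Tr}_\Ho(J^2_Z)$ must enter. Pick local orthonormal frames $X_1,\ldots,X_n$ of $\Ho$ and $Z_1,\ldots,Z_m$ of $\V$; using $J_{X_j}=0$ the horizontal--horizontal Hessian entries are
\[
A_{ij} := (\nabla^\varepsilon_{X_i}df)(X_j) = X_i(X_j f) - (\nabla_{X_i}X_j)f + T(X_i,X_j)f.
\]
Decompose $A = A^{\mathrm{sym}} + A^{\mathrm{asym}}$: a short computation using $[X_i,X_j] = \nabla_{X_i}X_j - \nabla_{X_j}X_i - T(X_i,X_j)$ gives $A^{\mathrm{asym}}_{ij} = \tfrac{1}{2}T(X_i,X_j)f$, while the trace of $A^{\mathrm{sym}}$ equals $\mathrm{tr}(A) = \Dh f$. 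Cauchy--Schwarz on $A^{\mathrm{sym}}$ produces $\frac{1}{n}(\Dh f)^2$. For the antisymmetric part, expanding $T(X_i,X_j) = \sum_k g_\Ho(J_{Z_k}X_i, X_j)\,Z_k$ and double-contracting yields
\[
\sum_{i,j}(T(X_i,X_j)f)^2 = -\mathbf{Tr}_\Ho(J^2_{\nabla_\V f}),
\]
so the hypothesis applied at $Z = \nabla_\V f$ gives $\sum_{i,j}(A^{\mathrm{asym}}_{ij})^2 \ge \rho_2\Gamma^\V(f,f)$. The horizontal--vertical block of $\|\nabla^\varepsilon_\Ho df\|_\varepsilon^2$ is non-negative and is simply discarded, closing the inequality.

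Part 2 is a much simpler application of the vertical Bochner formula: the same expansion identifies its LHS with $\Gamma^{\V,\Ho}_2(f,f) + \varepsilon\Gamma^\V_2(f,f)$; one discards $\|\nabla^2_{\Ho,\V}f\|^2 \ge 0$, applies Cauchy--Schwarz to the (symmetric) vertical--vertical Hessian to obtain $\varepsilon\|\nabla^2_\V f\|^2 \ge \frac{\varepsilon}{m}(\Dv f)^2$, and invokes $\mathbf{Ric}_\V \ge \rho_3$. The main obstacle is the $\rho_2$-term in Part 1: three of the four curvature contributions are handed over directly by the Bochner formula, but $\rho_2\Gamma^\V$ has to be dug out of the Hessian norm through the symmetric/antisymmetric decomposition above, which works only because the antisymmetric part of the Hessian is a torsion--mediated avatar of the $J_Z$-tensor.
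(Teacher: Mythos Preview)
Your proof is correct and follows essentially the same route as the paper: both derive the inequalities directly from the two Bochner identities of Theorem~\ref{Bochner2}, identifying the left-hand sides via the splitting $\|df\|_\varepsilon^2=\Gamma(f,f)+\varepsilon\Gamma^\V(f,f)$ and reducing everything to the bound $\|\nabla^\varepsilon_\Ho df\|_\varepsilon^2\ge \tfrac{1}{n}(\Dh f)^2+\rho_2\Gamma^\V(f,f)$. The paper states this last bound as $\|\nabla^\varepsilon_\Ho df\|_\varepsilon^2\ge\|\nabla^2_\Ho f\|^2-\tfrac14\mathbf{Tr}_\Ho(J^2_{df})$ and defers the details to \cite{BKW}; your symmetric/antisymmetric decomposition of $A_{ij}$ is exactly the computation that justifies it.
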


\begin{proof}
The proof of 1. follows from 
\begin{align*}
  \| \nabla^\varepsilon_{\mathcal{H}} df  \|_{\varepsilon}^2
   &  \ge \| \nabla^2_\Ho f  \|^2
  -\frac{1}{4}\mathbf{Tr}_\mathcal{H} (J^2_{df}) \\
   & \ge \frac{1}{n} (\Dh f)^2+\rho_2 (r_\varepsilon) \Gamma^\mathcal{V} (f,f),
  \end{align*}
  where we refer to the proof of Theorem 3.1 in \cite{BKW} for the details. The proof of 2.~is immediate.
\end{proof}

For an alternative proof of Theorem  \ref{comparison 1} and \ref{vertical comparison}, we shall need the easily proved  following  lemma.

\begin{lemma}\label{zero}
We have
\[
\lim_{x \to x_0} r_\varepsilon (x)^2\Dh r_\varepsilon (x)=\lim_{x \to x_0} r_\varepsilon (x)^2\Dv r_\varepsilon (x)=0.
\]
\end{lemma}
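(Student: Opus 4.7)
The plan is to exploit the fact that although $r_\varepsilon$ is only Lipschitz near $x_0$, the squared distance $r_\varepsilon^2$ is smooth in a (punctured) neighborhood of $x_0$; in fact, by standard Riemannian geometry applied to the metric $g_\varepsilon$, the function $r_\varepsilon^2$ extends smoothly through $x_0$. Therefore $\Delta_\Ho(r_\varepsilon^2)$ and $\Delta_\V(r_\varepsilon^2)$ are bounded in a neighborhood of $x_0$. The strategy is to convert this bound into the desired statement via the Leibniz (carr\'e du champ) identity.

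First I would write, using that $\Delta_\Ho$ is a second order differential operator with carr\'e du champ $\Gamma$,
\[
\Delta_\Ho(r_\varepsilon^2) = 2 r_\varepsilon\, \Delta_\Ho r_\varepsilon + 2\,\Gamma(r_\varepsilon),
\]
so that
\[
r_\varepsilon \Delta_\Ho r_\varepsilon = \tfrac{1}{2}\Delta_\Ho(r_\varepsilon^2) - \Gamma(r_\varepsilon).
\]
Since $\Gamma(r_\varepsilon) = \|\nabla_\Ho r_\varepsilon\|_g^2 \le \|\nabla^{g_\varepsilon} r_\varepsilon\|_{g_\varepsilon}^2 = 1$ from the eikonal equation, and since $\Delta_\Ho(r_\varepsilon^2)$ is bounded near $x_0$ by smoothness, the quantity $r_\varepsilon \Delta_\Ho r_\varepsilon$ is bounded in a neighborhood of $x_0$. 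Multiplying by $r_\varepsilon$ and letting $x\to x_0$ yields $r_\varepsilon^2 \Delta_\Ho r_\varepsilon \to 0$.

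The vertical case is analogous. I would write
\[
\Delta_\V(r_\varepsilon^2) = 2 r_\varepsilon\, \Delta_\V r_\varepsilon + 2\,\Gamma^\V(r_\varepsilon),
\]
observe that $\Gamma^\V(r_\varepsilon) = \|\nabla_\V r_\varepsilon\|_g^2$ is bounded: indeed, decomposing $\nabla^{g_\varepsilon} r_\varepsilon = \nabla_\Ho r_\varepsilon + \varepsilon \nabla_\V r_\varepsilon$ and using $\|\nabla^{g_\varepsilon} r_\varepsilon\|_{g_\varepsilon}^2 = 1$ gives the identity $\Gamma(r_\varepsilon) + \varepsilon \Gamma^\V(r_\varepsilon) = 1$, hence $\Gamma^\V(r_\varepsilon) \le 1/\varepsilon$. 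Together with the boundedness of $\Delta_\V(r_\varepsilon^2)$ near $x_0$, this gives $r_\varepsilon \Delta_\V r_\varepsilon$ bounded and therefore $r_\varepsilon^2 \Delta_\V r_\varepsilon \to 0$.

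The only non-routine step is the smoothness of $r_\varepsilon^2$ near $x_0$, which follows from the standard exponential-chart argument for the Riemannian metric $g_\varepsilon$; beyond that, the proof is a pure carr\'e du champ manipulation combined with the eikonal inequalities $\Gamma(r_\varepsilon)\le 1$ and $\Gamma^\V(r_\varepsilon) \le 1/\varepsilon$.
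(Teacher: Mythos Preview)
Your argument is correct. The paper does not actually give a proof of this lemma; it simply introduces it as ``easily proved'' and moves on, so there is nothing to compare against. Your route via the smoothness of $r_\varepsilon^2$ near $x_0$ (standard for the Riemannian metric $g_\varepsilon$) together with the carr\'e du champ identity $\Delta_\Ho(r_\varepsilon^2)=2r_\varepsilon\,\Delta_\Ho r_\varepsilon+2\Gamma(r_\varepsilon)$ is exactly the kind of short computation the authors presumably had in mind, and in fact yields the slightly stronger conclusion that $r_\varepsilon\,\Delta_\Ho r_\varepsilon$ and $r_\varepsilon\,\Delta_\V r_\varepsilon$ stay bounded near $x_0$.
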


We are now in position to give a second proof of Theorem  \ref{comparison 1}.

\begin{proof}[Proof \rm\textbf{(Second proof of Theorem~\ref{comparison 1})}]
Let $\gamma(t)$, $0\le t \le r_\varepsilon(x)$, be the unique length parametrized $g_\varepsilon$-geodesic between $x_0$ and $x$. We denote
\[
\phi (t) =\Delta_\ch r_\varepsilon (\gamma(t)), \quad 0 < t \le r_\varepsilon(x).
\]
From Theorem \ref{CD}, we get the differential inequality
\begin{align}\label{diff in}
-\phi'(t) \ge \frac{1}{n} (\phi(t))^2 +\left( \rho_1 (t) -\frac{\kappa(t)}{\varepsilon}\right) \Gamma(r_\varepsilon)(x)+\rho_2 (t) \Gamma^\mathcal{V} (r_\varepsilon)(x),
\end{align}
because  $\Gamma(r_\varepsilon)$ and $\Gamma^\mathcal{V} (r_\varepsilon)$ are constants along $\gamma$. We now notice the lower bound
\[
\frac{1}{n} (\phi(t))^2 \ge 2 \frac{G'(t)}{G(t)} \phi(t) -  n \frac{G'(t)^2}{G(t)^2}.
\]
Using this lower bound in \eqref{diff in}, multiplying by $G(t)^2$, and integrating from $0$ to $r_\varepsilon (x)$ yields the expected result thanks to lemma \ref{zero}.
\end{proof}

The second proof of Theorem \ref{vertical comparison} is identical.

\section{Horizontal and vertical Hessian and Laplacian comparison theorems on Sasakian foliations} \label{sec:HTypeComp}

It is remarkable that Theorem \ref{comparison 1} does not require any assumption on the dimension or curvature of the vertical bundle. However, when $\varepsilon$ goes to 0 the upper bound for $\Delta_{\Ho} r_\varepsilon$ blows up to $\infty$, whereas it is known that in some situations one may expect a horizontal Laplacian comparison theorem for the sub-Riemannian distance $d_0$. Indeed, for instance in the 3-dimensional Heisenberg group it is known that in the distributional sense
\[
\Delta_\Ho r_0 \le \frac{4}{r_0}
\]
where $r_0$ is the distance to a fixed point, and the constant $4$ is sharp. This horizontal Laplacian comparison theorem for the sub-Riemannian distance has been first generalized in 3-dimensional Sasakian manifolds by Agrachev-Lee \cite{AL1}. See also a version proved in higher dimensional Sasakian manifolds by Lee-Li \cite{LL}, but note that this work contains some typos.

\

Inspired by some of the results in \cite{Lee} and \cite{Rifford}, we prove in this section that for Sasakian manifolds, a comparison theorem for the sub-Riemannian distance may be obtained as a limit when $\varepsilon \to 0$ of a comparison theorem for the distances $r_\varepsilon$.  With respect to \cite{AL1,Lee,LL}, we obtain an explicit and simple upper bound for $\Delta_\Ho r_\varepsilon $ which is sharp when $\varepsilon \to 0$, and in our opinion the method and computations are more straightforward and shorter. Our method has also the advantage to easily yield a Hessian comparison theorem  for the  distance $r_\varepsilon$, $\varepsilon > 0$ (such Hessian comparison theorem is not explicitly worked out in \cite{LL}) and a vertical Laplacian comparison theorem (see Theorem \ref{VL}).

\

We now describe the setting of Sasakian manifolds (see \cite{BoG} for further details about Sasakian geometry). Let $(\M, \theta,g)$ be a complete K-contact Riemannian manifold with Reeb vector field $S$. The Bott connection coincides  with the Tanno's connection that was introduced in \cite{Tanno} and which is the unique connection that satisfies:
\begin{enumerate}
\item $\nabla\theta=0$;
\item $\nabla S=0$;
\item $\nabla g=0$;
\item ${T}(X,Y)=d\theta(X,Y)S$ for any $X,Y\in \Gamma^\infty(\mathcal{H})$;
\item ${T}(S,X)=0$ for any vector field $X\in \Gamma^\infty(\mathcal{H})$.
\end{enumerate}
It is easy to see that the Reeb foliation is of Yang-Mills type if and only if $\delta_\Ho d \theta=0$. Equivalently, if we introduce an operator $\mathbf{J} : = J_S$, this condition writes $\delta_\Ho \mathbf J =0$. If $\M$ is a strongly pseudo convex CR manifold with pseudo-Hermitian form $\theta$, then the Tanno's connection is the Tanaka-Webster connection. In that case, we have then $\nabla \mathbf J=0$ (see \cite{Dragomir}) and thus  $\delta_\Ho \mathbf J =0$. CR manifold of K-contact type are called Sasakian manifolds (see \cite{Dragomir}). Thus, the Reeb foliation on any Sasakian manifold is of Yang-Mills type.
\

Throughout the section, we assume that the Riemannian foliation on $\M$ is the Reeb foliation of a Sasakian structure. The Reeb vector field on $\M$ will be denoted by $S$ and the complex structure by $\mathbf{J}$. The torsion of the Bott connection is then
\[
T(X,Y)=\langle \mathbf J X, Y \rangle_\Ho S.
\]
Therefore with the previous notations, one has 
\[
J_Z X =\langle Z,S \rangle \mathbf J X.
\]
In this setting, the formula \eqref{curvature adjoint} for the curvature of the adjoint connection greatly simplifies:
\begin{align*}
\hat{R}^\varepsilon (X,Y)Z & =  R(X,Y)Z +\frac{1}{\varepsilon} J_{T(X,Y)} Z \\
 &=R(X,Y)Z +\frac{1}{\varepsilon} \langle \mathbf J X, Y \rangle_\Ho \mathbf{J} Z.
\end{align*}

In a Sasakian space, for every non-vanishing horizontal vector field $X$,  $T\M$ is always generated by $[X,\Ho]$ and $\Ho$. Therefore the sub-Riemannian structure on a Sasakian foliation is fat (see~\textup{\cite{RS}} for a detailed discussion of such structures). In particular all sub-Riemannian geodesics are normal and from Corollary 6.1 in \textup{\cite{RT}}, for every $x_0 \in \M$, the distance function $x \to r_0 (x)$ is locally semi-concave in $\M\setminus \{ x_0 \}$. In particular, it is twice differentiable almost everywhere. Also, from Corollary 32 in \textup{\cite{BR2}}, $x \neq x_0$ is in $\mathbf{Cut}_0 (x_0)$ if and only if $r_0$ fails to be semi-convex at $x$. Therefore, $\mathbf{Cut}_0 (x_0)$ has $\mu$ measure 0. Finally, at any point $x$ for which the function $x \to r_0 (x)  $ is differentiable, there exists a unique length minimizing sub-Riemannian geodesic and this geodesic is normal.

\

We now introduce the relevant tensors to state the horizontal Laplacian comparison theorem. We first define  for $X \in \Gamma^\infty(\mathcal{H})$,
\[
\mathbf{K}_{\mathcal{H},J} (X,X) = \langle R( X,\mathbf{J}X)\mathbf{J}X , X\rangle_\mathcal{H} .
\]
The quantity $\mathbf{K}_{\mathcal{H},J}$ is sometimes called the pseudo-Hermitian sectional curvature of the Sasakian manifold (see \cite{B} for a geometric interpretation). It can be seen as the CR analog of the holomorphic sectional curvature of a K\"ahler manifold.

We will  also denote
\[
\mathbf{Ric}_{\mathcal{H},J^\perp} (X,X) =\mathbf{Ric}_\mathcal{H} (X,X)-\mathbf{K}_{\mathcal{H},J} (X,X). 
\]
Recall that for an $n$-dimensional Riemannian manifold with Ricci curvature bounded from below by $(n-1) k$, the usual Laplacian comparison theorem states that $\Delta r \leq (n-1) F_{\mathrm{Rie}}(r,k)$ where
$$F_{\mathrm{Rie}}(r,k) = \begin{cases}  \sqrt{k} \cot \sqrt{k} r & \text{if $k > 0$,} \\
\frac{1}{r} & \text{if $k = 0$,}\\ \sqrt{|k|} \coth \sqrt{|k|} r & \text{if $k < 0$.} \end{cases}$$

Furthermore, for $3$-dimensional Sasakian manifolds with $\mathbf{K}_{\ch,J} \geq k$ on horizontal vectors, we have the mentioned sharp inequality $\Delta_\ch r \leq F_{\mathrm{Sas}}(r,k)$ of \cite{AL1}, where
$$F_{\mathrm{Sas}}(r,k) = \begin{cases}  \frac{\sqrt{k}(\sin \sqrt{k}r  -\sqrt{k} r \cos \sqrt{k} r)}{2 - \cos \sqrt{k} r - \sqrt{k} r \sin \sqrt{k} r} & \text{if $k > 0$,} \\
\frac{4}{r} & \text{if $k = 0$,}\\ \frac{\sqrt{|k|}( \sqrt{|k|} r \cosh \sqrt{|k|} r - \sinh \sqrt{|k|}r)}{2 - \cosh \sqrt{|k|} r + \sqrt{|k|} r \sinh \sqrt{|k|} r} & \text{if $k < 0$.} \end{cases}$$
We generalize this result to arbitrary dimensions in our main result.

\begin{theorem}[Horizontal Laplacian comparison theorem] \label{th:SasakianComp3}
Let $(\M,\mathcal{F},g)$ be a Sasakian foliation with sub-Riemannian distance $d_0$. Define $r_0(x) = d(x_0 ,x)$.
Assume that for some $k_1, k_2 \in \mathbb{R}$
$$\mathbf{K}_{\mathcal{H},J}(v,v) \ge  k_1, \qquad \mathbf{Ric}_{\mathcal{H},J^\perp}(v,v) \ge (n-2)k_2, \qquad v \in \ch, \| v\|_g = 1.$$
Then outside of the $d_0$ cut-locus of $x_0$ and globally on $\M$ in the sense of distributions,
\[
\Delta_\Ho r_0 \le F_{\mathrm{Sas}}(r,k_1) +    (n-2)  F_{\mathrm{Rie}}(r,k_2).
\]
\end{theorem}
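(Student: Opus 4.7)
The plan is to derive the sub-Riemannian bound as the $\varepsilon \downarrow 0$ limit of a uniform horizontal Laplacian comparison theorem for the Riemannian approximations $r_\varepsilon$. Fix $x \notin \mathbf{Cut}_\varepsilon(x_0)$, let $\gamma$ be the unique length-parametrized $g_\varepsilon$-geodesic from $x_0$ to $x$, and apply Proposition \ref{index formulas}(a). In the Sasakian setting the structure tensors collapse to $T(X,Y) = \langle \mathbf{J}X, Y\rangle_\Ho S$ and $J_Z X = \langle Z, S\rangle_g \mathbf{J}X$ with $\nabla \mathbf{J} = 0$, so $\langle(\nabla_Y T)(Y,\gamma'),\gamma'\rangle_g$ and $\|T(\gamma', Y)\|_g^2$ reduce to explicit expressions in $\langle \mathbf{J}\gamma'_\Ho, Y\rangle_g$, and the integrand of $I_{\Ho,\varepsilon}(Y,Y)$ becomes a manageable quadratic form in $Y$.

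Next, write $\Delta_\Ho r_\varepsilon(x) = \sum_{i=1}^n \nabla^2 r_\varepsilon(X_i, X_i)$ in a horizontal orthonormal frame $\{X_i\}$ along $\gamma$, parallel with respect to $\hat{\nabla}^{2\varepsilon}_{\gamma'}$. Arrange the frame so that $X_1$ is aligned with $\mathbf{J}\gamma'_\Ho/\|\gamma'_\Ho\|_g$, $X_n$ with $\gamma'_\Ho/\|\gamma'_\Ho\|_g$, and $X_2,\dots,X_{n-1}$ orthogonal to both $\gamma'_\Ho$ and $\mathbf{J}\gamma'_\Ho$. For the $n-2$ perpendicular directions the crucial observation is that $\|T(\gamma',X_i)\|_g^2 = \langle \mathbf{J}\gamma'_\Ho, X_i\rangle_g^2 = 0$, so the index form reduces to its Riemannian analogue; testing against the standard Jacobi field of curvature $k_2$ yields a collective bound $(n-2)F_{\mathrm{Rie}}(r_\varepsilon, k_2)$ thanks to $\mathbf{Ric}_{\Ho, J^\perp}(\gamma'_\Ho,\gamma'_\Ho) \ge (n-2) k_2 \|\gamma'_\Ho\|_g^2$. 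For the $X_1$-direction, test against the exact Jacobi field of the Sasakian model space of pseudo-Hermitian curvature $k_1$; this direction captures $\mathbf{K}_{\Ho,J}(\gamma'_\Ho,\gamma'_\Ho) \ge k_1\|\gamma'_\Ho\|_g^4$ and, in the $\varepsilon \to 0$ limit, produces $F_{\mathrm{Sas}}(r_0, k_1)$. The $X_n$-term is handled separately and shown to disappear in the sub-Riemannian limit.

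By \eqref{observation}, $\Gamma(r_\varepsilon)(x) = \|\gamma'_\Ho\|_g^2$ and $\varepsilon^2\Gamma^\V(r_\varepsilon)(x) = \|\gamma'_\V\|_g^2$ are constant along $\gamma$, and together with $\Gamma(r_\varepsilon) + \varepsilon\Gamma^\V(r_\varepsilon) = 1$ from Remark \ref{rem:sLc} they control the $\varepsilon$-dependence. To pass to the sub-Riemannian limit, pick $x \notin \bigcup_{n\ge 1}\mathbf{Cut}_{1/n}(x_0) \cup \mathbf{Cut}_0(x_0)$, set $\varepsilon = 1/n$ and let $n \to \infty$; Lemma \ref{limit} gives $\Gamma(r_{1/n})(x) \to 1$ and $\varepsilon\Gamma^\V(r_\varepsilon) \to 0$, and the potentially divergent $\varepsilon^{-1}$ contributions arising from the Sasakian torsion in the $X_1$-direction conspire with the model Jacobi equation to yield a finite expression matching $F_{\mathrm{Sas}}$. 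The main obstacle will be the explicit Jacobi-field computation in the Sasakian model spaces (Heisenberg, Hopf, anti-de Sitter) underlying $F_{\mathrm{Sas}}$: by employing $\hat{\nabla}^\varepsilon$ rather than the Levi-Civita connection, as advocated earlier in the paper, the Jacobi equation on the models reduces to a constant-coefficient linear system that can be integrated by hand (Appendix 2). Finally, extension from the pointwise estimate to the distributional inequality on all of $\M$ follows from the semi-concavity of $r_0$ away from $x_0$, noted in the excerpt, via a standard density argument.
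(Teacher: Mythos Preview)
Your proposal is correct and follows essentially the same route as the paper: the same orthogonal decomposition of $\Ho_x$ into $\gamma'_\Ho$, $\mathbf{J}\gamma'_\Ho$, and $\mathfrak{L}_J(\gamma'_\Ho)$, the same use of model-space Jacobi fields (computed via $\hat\nabla^\varepsilon$) as test fields in the index form, and the same appeal to Lemma~\ref{limit} for the $\varepsilon\to 0$ limit. The only minor procedural difference is that the paper first passes to the distributional inequality for each fixed $\varepsilon$ via Calabi's trick and Green's formula, and \emph{then} lets $\varepsilon\downarrow 0$ inside the integrals, whereas you take the pointwise limit first and invoke semi-concavity afterward; both work, but the paper's order sidesteps having to control the union $\bigcup_n \mathbf{Cut}_{1/n}(x_0)$ directly.
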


It is known that the  holomorphic sectional curvature determines the whole curvature tensor, however there exist explicit examples of manifolds with positive holomorphic sectional curvature without any metric of positive Ricci curvature (see \cite{hichin}). As a consequence it is likely that there exist examples for which $k_1$ and $k_2$ do not have the same sign.

\

Theorem \ref{th:SasakianComp3} will be proved in the next sections. As a by-product of the proof of this theorem, we first point out a straightforward corollary.

\begin{theorem}[Sub-Riemannian Bonnet-Myers theorems] \label{BMsubriem}
\label{th:BM}
Let $(\M,\mathcal{F},g)$ be a Sasakian foliation.
\begin{enumerate}[\rm1.]
\item 
Assume that for some $k_1 > 0$, we have 
\[
\mathbf{K}_{\mathcal{H},J}(v,v) \ge  k_1, \qquad v \in \ch, \| v\|_g = 1.
\]
Then $\M$ is compact, the fundamental group $\pi_1(\M)$ is finite and
\[
\mathbf{diam} \left( \M , d_0 \right) \le \frac{2\pi}{\sqrt{k_1}}.
\]
\item 
Assume $n >2$ and that for some $k_2 > 0$, we have 
\[
\mathbf{Ric}_{\mathcal{H},J^\perp}(v,v) \ge (n-2)k_2, \qquad v \in \ch, \| v\|_g = 1.
\] 
Then $\M$ is compact, the fundamental group $\pi_1(\M)$ is finite and
\[
\mathbf{diam} \left( \M , d_0 \right) \le \frac{\pi}{\sqrt{k_2}}.
\]
\end{enumerate}
\end{theorem}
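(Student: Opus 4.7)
The plan is to extract both bounds from the Jacobi-field/index-form analysis underlying Theorem~\ref{th:SasakianComp3}, rather than from a Calabi-type argument applied to the sub-Laplacian comparison itself. Since a Sasakian foliation has fat horizontal distribution, every length-minimizing sub-Riemannian geodesic is a normal extremal, and past its first conjugate point it ceases to be length-minimizing (the cut locus fact cited from \cite{BR2}). So it suffices, under each curvature hypothesis, to exhibit a conjugate point at the claimed distance along an arbitrary unit-speed horizontal minimizing geodesic $\gamma\colon[0,L]\to\M$ from $x_0$. Once the resulting diameter bound is in place, completeness of $(\M,g)$ transfers to $d_0$, and the sub-Riemannian Hopf--Rinow theorem gives compactness of $\M$; applying the same diameter bound to the universal cover $\tilde\M$, which inherits a Sasakian foliation with identical pointwise curvature bounds, then yields that $\tilde\M$ is also compact, forcing the covering fiber $\pi_1(\M)$ to be finite.

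\textbf{Part 1.} I would work with the horizontal index form of Proposition~\ref{index formulas} along $\gamma$ viewed as a $g_\varepsilon$-geodesic and pass to the sub-Riemannian limit $\varepsilon\to0$, exactly as in the proof of Theorem~\ref{th:SasakianComp3}. Restricting to one-dimensional variations $V(t)=\phi(t)\mathbf{J}\gamma'(t)$ with $\phi(0)=\phi(L)=0$, the Sasakian identities $T(\gamma',\mathbf{J}\gamma')=S$, $J_S\gamma'=\mathbf{J}\gamma'$, and $\nabla\mathbf{J}=0$ reduce all torsion- and $J$-tensor contributions to universal (model-independent) expressions in $\phi$ and $\phi'$; the only genuine curvature term that survives is $\langle R(\gamma',\mathbf{J}\gamma')\mathbf{J}\gamma',\gamma'\rangle_\Ho=\mathbf{K}_{\Ho,J}(\gamma',\gamma')\ge k_1$. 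Comparison with the same restricted form on the Sasakian model space of constant pseudo-Hermitian sectional curvature $k_1$ — whose scalar Jacobi equation is computed explicitly in Appendix~2 and has first conjugate point at $2\pi/\sqrt{k_1}$ — then shows that the actual index form becomes indefinite as soon as $L>2\pi/\sqrt{k_1}$. Taking $\phi$ to be a rescaling of the model Jacobi coefficient produces a negative second variation, contradicting minimality of $\gamma$.

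\textbf{Part 2.} Parallel-transport (with respect to the adjoint connection $\hat\nabla^\varepsilon$ used throughout Section~\ref{sec:HTypeComp}) an orthonormal frame $\{e_1,\dots,e_{n-2}\}$ of the horizontal subspace $\{\gamma',\mathbf{J}\gamma'\}^\perp$ along $\gamma$, and test with $V_i(t)=\sin(\pi t/L)\,e_i(t)$. Here the Sasakian identities $T(\gamma',e_i)=\langle\mathbf{J}\gamma',e_i\rangle S=0$ and $J_{e_i}=0$ (since each $e_i$ is horizontal) collapse every torsion contribution in the index formula, so the $\varepsilon\to0$ limit is clean and the form reduces to the purely Riemannian index form on the $(n-2)$-dimensional subspace $\{\gamma',\mathbf{J}\gamma'\}^\perp$. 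Summing,
\begin{equation*}
\sum_{i=1}^{n-2}I_\Ho(V_i,V_i) \;\le\; (n-2)\int_0^L\!\!\left(\frac{\pi^2}{L^2}\cos^2\!\frac{\pi t}{L} - k_2\sin^2\!\frac{\pi t}{L}\right)dt \;=\; \frac{n-2}{2}\!\left(\frac{\pi^2}{L} - k_2 L\right),
\end{equation*}
which is negative as soon as $L>\pi/\sqrt{k_2}$, again contradicting minimality.

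\textbf{Main obstacle.} The delicate part is Part~1: since $F_{\mathrm{Sas}}(r,k_1)$ remains finite at $r=2\pi/\sqrt{k_1}$, the diameter bound cannot be deduced by Calabi's argument from Theorem~\ref{th:SasakianComp3}; one must instead work at the Jacobi-field level. The main technical point is controlling the sub-Riemannian limit of the $\varepsilon$-dependent index form in the $\mathbf{J}\gamma'$ direction — specifically, understanding how the singular $\varepsilon^{-1}\|T(\gamma',\mathbf{J}\gamma')\|^2$ term combines with the scaling of $\phi$ and the model comparison to give a finite, sharp conjugate-distance statement. This is exactly the content of the explicit Jacobi field calculations of Appendix~2, whose Hessian-level packaging is Theorem~\ref{Hessian general3}; the Bonnet--Myers conclusion is then read off by the index-form argument above.
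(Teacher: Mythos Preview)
Your Part~2 is correct and, once you strip away the sub-Riemannian packaging, coincides with the paper's argument: for $Y\in\mathfrak L_J(\gamma')$ the torsion terms in the horizontal index form vanish identically (since $T(\gamma',Y)=\langle\mathbf J\gamma',Y\rangle S=0$ and $J_{\gamma'}=0$ along a horizontal geodesic), so one is reduced to the Riemannian Rauch situation with curvature $\ge k_2$; the paper reads off the same bound from Theorem~\ref{Hessian general3}(3), whose Hessian estimate $\phi'_{-\lambda_\ve k_2}/\phi_{-\lambda_\ve k_2}$ diverges at $\pi/\sqrt{\lambda_\ve k_2}$.

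Your Part~1, however, has a genuine gap. The one-dimensional horizontal variation $V(t)=\phi(t)\mathbf J\gamma'(t)$ cannot by itself produce a negative index form: along a horizontal geodesic the formula of Proposition~\ref{index formulas}(a) gives
\[
I_{\Ho,\ve}(V,V)=\int_0^L\Big((\phi')^2-\mathbf K_{\Ho,J}(\gamma',\gamma')\,\phi^2+\tfrac{1}{\ve}\phi^2\Big)\,dt
\ge \int_0^L\Big((\phi')^2+\big(\tfrac{1}{\ve}-k_1\big)\phi^2\Big)\,dt,
\]
which is \emph{positive} for every nonzero $\phi$ once $\ve<1/k_1$, so no choice of $\phi$ exhibits a conjugate point as $\ve\to0$. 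The point is not a mere scaling issue: the relevant Jacobi field in the $\mathbf J\gamma'$ direction is \emph{not} one-dimensional; it couples $\mathbf J\gamma'$ with the vertical Reeb direction $S$ (Appendix~2, Lemma~\ref{lemma:ExJacobi}(b)), and it is the vanishing of the determinant $C_\ve$ of this $2\times2$ system that gives the conjugate time $2\pi/\sqrt{k_1}$ in the limit. Your ``Main obstacle'' paragraph effectively concedes this by deferring to Theorem~\ref{Hessian general3}, and once you do that you are exactly on the paper's path: the paper argues via the Hessian bound of Theorem~\ref{Hessian general3}(2), which (unlike the Laplacian bound $F_{\mathrm{Sas}}$) \emph{does} diverge to $-\infty$ as $r_\ve\to 2\pi/\sqrt{\lambda_\ve k_1}$, forcing any $x$ outside the $g_\ve$-cut-locus to satisfy $d_\ve(x_0,x)\le 2\pi/(\sqrt{\lambda_\ve k_1})$; then Lemma~\ref{limit} sends $\lambda_\ve\to1$. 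So your proposal for Part~1 is not an alternative route but a restatement of the paper's argument, prefaced by a horizontal-only variation that does not work. The universal-cover argument for $\pi_1(\M)$ is the same in both.
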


\begin{remark}
The same Bonnet-Myers type theorems with identical assumptions were obtained in \textup{\cite{ABR}} (see Corollaries 5.6, 5.8) by completely different methods. As observed in \textup{\cite{ABR}}, the diameter upper bounds are sharp in the case of the  Hopf fibration $\mathbb{S}^1 \to \mathbb{S}^{2n+1} \to \mathbb{CP}^n$ (the sub-Riemannian diameter is $\pi$ in that case).
\end{remark}

\subsection{The curvature tensor on Sasakian manifolds} The following lemma will be useful:

\begin{lemma}\label{positive vertical}
 Let $(\M,\mathcal{F},g)$ be a Sasakian foliation. Then, for all $X,Y \in \Gamma^\infty (\M)$,
 \[
 \left\langle R(X,Y)Y,X \right\rangle_{g_\ve} =  \left\langle R(X_\Ho , Y_\Ho ) Y_\Ho , X_\Ho \right\rangle_{\Ho} .
 \]
\end{lemma}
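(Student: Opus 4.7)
The plan is to decompose $X = X_\Ho + X_\V$ and $Y = Y_\Ho + Y_\V$, expand $\langle R(X,Y)Y, X\rangle_{g_\ve}$ by multilinearity of $R$, and show that every resulting cross term involving at least one vertical factor vanishes. The identification of the surviving term with $\langle R(X_\Ho, Y_\Ho) Y_\Ho, X_\Ho \rangle_\Ho$ is then free, since $R(X_\Ho, Y_\Ho) Y_\Ho$ is horizontal and $g_\ve$ agrees with $g$ on $\Ho$.

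Three structural facts will do the work. First, the Bott connection preserves the splitting $\Ho \oplus \V$, so $R(X,Y)$ maps $\Ho$ to $\Ho$ and $\V$ to $\V$. Second, since the Reeb field $S$ spans $\V$ and is parallel for the Bott connection, $R(X,Y)S = 0$; consequently $R(X,Y)$ annihilates all of $\V$, and in particular $R(X_\V, Y_\V) = R(bS, aS) = 0$ when $X_\V = bS$, $Y_\V = aS$. Third, I claim the full pair-exchange symmetry
\[
\langle R(X,Y)Z, W\rangle_{g_\ve} = \langle R(Z,W)X, Y\rangle_{g_\ve}
\]
holds. Granted these, the piece $R(X, Y) Y_\V$ disappears by the second fact, reducing the left-hand side to $\langle R(X, Y) Y_\Ho, X_\Ho\rangle_\Ho$. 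Expanding $R(X, Y)$ on $Y_\Ho$, the $R(X_\V, Y_\V) Y_\Ho$ term vanishes, and for the two genuinely mixed pieces the pair-exchange symmetry swaps slots as in
\[
\langle R(X_\Ho, Y_\V) Y_\Ho, X_\Ho\rangle_{g_\ve} = \langle R(Y_\Ho, X_\Ho) X_\Ho, Y_\V\rangle_{g_\ve} = 0,
\]
since $R(Y_\Ho, X_\Ho) X_\Ho \in \Ho$ while $Y_\V \in \V$; the remaining term $\langle R(X_\V, Y_\Ho) Y_\Ho, X_\Ho\rangle_{g_\ve}$ collapses after the same swap using $R(\cdot,\cdot)S = 0$.

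The main obstacle is establishing the pair-exchange symmetry, which is not automatic because the Bott connection is not torsion free. The standard route is the first Bianchi identity with torsion: for a metric connection,
\[
\mathfrak{S}_{X,Y,Z}\, R(X,Y) Z = \mathfrak{S}_{X,Y,Z} \bigl[ (\nabla_X T)(Y, Z) + T(T(X, Y), Z) \bigr].
\]
To recover the torsion-free shape of the identity it suffices to verify that $\nabla T = 0$ and that $T(T(\cdot, \cdot), \cdot) = 0$. The first is a consequence of $\nabla S = 0$ together with $\nabla \mathbf{J} = 0$ for Sasakian manifolds (recalled earlier in the section) applied to $T(X, Y) = \langle \mathbf{J} X, Y\rangle_\Ho S$, and the second is automatic because $T(X,Y) \in \V$ always while $T$ vanishes whenever an argument is vertical. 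Once $\mathfrak{S}_{X,Y,Z}\, R(X,Y)Z = 0$ is in hand, combining it with the two skew-symmetries of $R$ coming from $\nabla g_\ve = 0$ yields the pair-exchange symmetry through the usual algebraic manipulation, completing the proof.
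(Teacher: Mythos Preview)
Your proof is correct and rests on the same ingredients as the paper's --- the first Bianchi identity with torsion (whose right-hand side vanishes in the Sasakian case because $\nabla T=0$ and $T(T(\cdot,\cdot),\cdot)=0$), together with the fact that the Bott connection preserves the splitting $\Ho\oplus\V$ --- but you package them differently. You first upgrade the torsion-free Bianchi identity to the full pair-exchange symmetry $\langle R(X,Y)Z,W\rangle_{g_\ve}=\langle R(Z,W)X,Y\rangle_{g_\ve}$ and then kill the mixed terms cleanly with this symmetry and $R(\cdot,\cdot)S=0$. The paper instead works term by term: for each cross term in the expansion it inserts the missing pieces of the cyclic sum (which vanish because $R$ preserves $\Ho$ and $\V$) and then invokes $\circlearrowright R=0$ directly, finishing the last purely vertical term with the one-dimensionality of the leaves. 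Your route yields an intermediate fact of independent interest (the Bott curvature enjoys the full Riemann-tensor symmetries on a Sasakian foliation), while the paper's argument is slightly more self-contained in that it never needs the full pair-exchange, only the cyclic identity.
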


\begin{proof}
Observe first that from the first Bianchi identity, with $\circlearrowright$ denoting the cyclic sum, we have
$$\circlearrowright R(X,Y)Z = \circlearrowright T(T(X,Y), Z) + \circlearrowright (\nabla_X T)(Y,Z) = 0.$$
The fact that $\nabla$ preserves the metric, gives us $\langle R( \, \cdot \, , \, \cdot \,) v, v \rangle_{g_\ve} =0$. Hence, as $\nabla$ also preserves both subbundles $\Ho$ and $\V$, one obtains
\begin{align*}
& \langle R(X,Y) Y, X \rangle_{g_\ve} - \langle R(X_\Ho,Y_\Ho) Y_\Ho, X_\Ho \rangle_\Ho \\
& = \langle R(X_\Ho,Y_\V) Y_\Ho, X_\Ho \rangle_{g_\ve} + \langle R(X_\V,Y_\Ho) Y_\Ho, X_\Ho \rangle_{g_\ve} + \langle R(X_\V,Y_\V) Y_\Ho, X_\Ho \rangle_{g_\ve} \\
& \quad  + \langle R(X_\Ho ,Y_\Ho) Y_\V, X_\V \rangle_{g_\ve} + \langle R(X_\Ho ,Y_\V) Y_\V, X_\V \rangle_{g_\ve} \\
& \quad + \langle R(X_\V,Y_\Ho) Y_\V, X_\V \rangle_{g_\ve} + \langle R(X_\V,Y_\V) Y_\V, X_\V \rangle_{g_\ve} \\
& = \langle \circlearrowright R(X_\Ho,Y_\V) Y_\Ho, X_\Ho \rangle_{g_\ve} - \langle Y_\Ho, \circlearrowright R(X_\V,Y_\Ho) X_\Ho \rangle_{g_\ve} + \langle \circlearrowright R(X_\V,Y_\V) Y_\Ho, X_\Ho \rangle_{g_\ve} \\
& \quad  + \langle \circlearrowright R(X_\Ho ,Y_\Ho) Y_\V, X_\V \rangle_{g_\ve} - \langle  Y_\V, \circlearrowright R(X_\Ho ,Y_\V) X_\V \rangle_{g_\ve} \\
& \quad + \langle \circlearrowright R(X_\V,Y_\Ho) Y_\V, X_\V \rangle_{g_\ve} + \langle R(X_\V,Y_\V) Y_\V, X_\V \rangle_{g_\ve} \\
& = \langle R(X_\V,Y_\V) Y_\V, X_\V \rangle_{g_\ve} =0,
\end{align*}
where in the last equality we used the fact that the leaves are one-dimensional.
\end{proof}

\subsection{Horizontal Hessian comparison theorem}
Throughout this section, we will rely on the following functions. For $r, \mu \in \mathbb{R}$, we define
\begin{equation*}
\phi_\mu(r) = \begin{cases} \frac{\sinh \sqrt{\mu} r}{\sqrt{\mu}} & \text{if $\mu > 0$,} \\
r & \text{if $\mu = 0$,}\\ \frac{\sin \sqrt{|\mu|} r}{\sqrt{|\mu|}} & \text{if $\mu < 0$,} \end{cases} \qquad 
\psi_\mu(r) =  \begin{cases} \frac{\sinh \sqrt{\mu} r- \sqrt{\mu} r}{\mu^{3/2}} & \text{if $\mu > 0$,} \\
\frac{1}{6} r^3 & \text{if $\mu = 0$,}\\ \frac{\sqrt{|\mu|}r - \sin \sqrt{|\mu|} r}{|\mu|^{3/2}} & \text{if $\mu < 0$.} \end{cases} 
\end{equation*}

Notice  that $\psi_\mu (r) = \int_0^r \int_0^{s_2} \phi_\mu(s_1) \, ds_1 \, ds_2$. 
We finally introduce the following function:
\begin{equation*}  \Psi_\mu(r) =  \begin{cases} \frac{1}{\mu^{3/2}} (\sqrt{\mu}- \frac{1}{r} \tanh \sqrt{\mu} r) & \text{if $\mu > 0$,}\\
\frac{1}{3} r^2 & \text{if $\mu = 0$,}\\
\frac{1}{|\mu|^{3/2}} ( \frac{1}{r} \tan \sqrt{|\mu|} r - \sqrt{|\mu|}) & \text{if $\mu < 0$.}
\end{cases}\end{equation*}
Using trigonometric and hyperbolic identities, we can verify that
\begin{equation}\label{FtoPhi}
F_{\mathrm{Rie}}(r,k) = \frac{\phi_{-k}'(r)}{\phi_{-k}(r)}, \qquad F_{\mathrm{Sas}}(r,k) = \frac{\phi_{-k}'}{\phi_{-k}} \frac{\Psi_{-k}(r)}{\Psi_{-k}(r/2)}
\end{equation}

As before, let $x_0\in \M$ be fixed and for $\varepsilon \ge 0$ denote
\[
r_\varepsilon (x) =d_\varepsilon (x_0,x).
\]

If $v \in \mathcal{H}_x$ is a non-zero vector, we define the  space $\mathfrak{L}_J (v)$ to be the subspace of $\mathcal{H}_x$ orthogonal to $v$ and  $\mathbf J v$. Observe that $\dim \mathfrak{L}_J (v)=n-2$ and that $u \in \mathfrak{L}_J (v)$ if and only if it is orthogonal to $u$ and satisfies $T(u,v)=0$.

Theorem \ref{th:SasakianComp3} will be proved as a consequence of the following horizontal Hessian comparison theorem.
 
\begin{theorem}[Horizontal Hessian comparison theorem]\label{Hessian general3}
Let $(\M,\mathcal{F},g)$ be a Sasakian foliation. Let $k_1,k_2 \in \mathbb{R}$ and $\ve >0$. Let $x\neq x_0$ be a point that is not in the $g_\ve$ cut-locus of $x_0$.
\begin{enumerate}[\rm1.]
 \item If $\| \nabla_\Ho r_\ve(x)\|_g^2  > 0$, then
\[
\nabla_\Ho^2 r_\varepsilon (\nabla_\Ho r_\varepsilon (x),\nabla_\Ho r_\varepsilon (x)) \le \frac{ \min (\| \nabla_\Ho r_\ve(x)\|_g^2 , 1-\| \nabla_\Ho r_\ve(x)\|_g^2)}{r_\ve (x)}.
\]

\item Assume that $\| \nabla_\Ho r_\ve(x)\|_g^2  > 0$ and  that for every local vector field $X \in \Gamma^\infty(\mathcal{H})$,   $\| X \|_\Ho=1$, 
\begin{align}\label{H1a}
 \langle R( X, \mathbf J X)\mathbf J X , X\rangle_\mathcal{H} \ge k_1.
\end{align}
Then, 
\[
\frac{1}{\lambda_\ve} \nabla^2 r_\ve(\mathbf J \nabla_\Ho  r_\ve(x), \mathbf J \nabla_\Ho r_\ve(x)) \leq\frac{\phi'_{-\lambda_\ve k_1}(r_\ve)}{\phi_{-\lambda_\ve k_1}(r_\ve)} \frac{\lambda_\ve \Psi_{-\lambda_\ve k_1}(r_\ve) + \ve}{\lambda_\ve \Psi_{-\lambda_\ve k_1}(r_\ve/2) + \ve} 
\]
where $ \lambda_\ve=\| \nabla_\Ho r_\ve(x)\|_g^2$.
\item Assume that $\| \nabla_\Ho r_\ve(x)\|_g^2  > 0$ and  that for all local vector fields $X,Y \in \Gamma^\infty(\mathcal{H})$, $\| X \|_g=\|Y\|_g=1$, and $Y \in \mathfrak{L}_J (X)$,
\begin{align}\label{H2a}
 \langle R( X, Y)Y , X\rangle_\mathcal{H} \ge k_2.
\end{align}
Then, for any horizontal unit vector $v \in \mathfrak{L}_J (\nabla_\Ho r_\ve(x))$,
\[
\nabla^2 r_\ve(v, v) \leq \frac{\phi_{-\lambda_\ve k_2}'(r_\ve)}{\phi_{- \lambda_\ve k_2}(r_\ve)}
\]
where $ \lambda_\ve=\| \nabla_\Ho r_\ve(x)\|_g^2$.
\item If $\nabla_\Ho r_\ve(x) = 0$, then  $\nabla^2 r_\ve (v, v) \le \frac{1}{2\ve} \frac{\phi_{-1/\ve}'(r_\ve/2)}{\phi_{- 1/\ve}(r_\ve/2)} $ for any unit  $v \in \Ho_x$.
\end{enumerate}
\end{theorem}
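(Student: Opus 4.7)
The plan is to apply the horizontal index formula of Proposition~\ref{index formulas} with test vector fields adapted to the Sasakian geometry, choosing different fields for each of the four parts according to how $v$ relates to $\nabla_\Ho r_\ve(x)$.

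Setup common to all parts: let $\gamma$ be the unit-speed $g_\ve$-geodesic from $x_0$ to $x$ and decompose $\gamma' = H + Z$, $Z = \eta S$. The Sasakian identity $J_Z W = \langle Z, S\rangle\mathbf{J}W$ together with $\nabla\mathbf{J} = \nabla S = 0$ decouples the geodesic equation into $\nabla_{\gamma'}Z = 0$ (so $\eta$ is constant with $\eta^2 = \ve(1-\lambda_\ve)$) and $\nabla_{\gamma'}H = -\tfrac{\eta}{\ve}\mathbf{J}H$. Consequently $\lambda_\ve = \|H\|_g^2$ is constant along $\gamma$, and when $\lambda_\ve > 0$ the unit fields $\hat H = H/\sqrt{\lambda_\ve}$, $\hat{\mathbf{J}} = \mathbf{J}H/\sqrt{\lambda_\ve}$ satisfy $\hat\nabla^{2\ve}_{\gamma'}\hat H = -\tfrac{\eta}{2\ve}\hat{\mathbf{J}}$ and $\hat\nabla^{2\ve}_{\gamma'}\hat{\mathbf{J}} = \tfrac{\eta}{2\ve}\hat H$. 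Moreover $\nabla T = 0$, so the $(\nabla_Y T)(Y,\gamma')$ term in $I_{\Ho,\ve}$ vanishes, and by Lemma~\ref{positive vertical} only the horizontal part of $\gamma'$ contributes to the curvature term.

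For Part~(1), the bound $\lambda_\ve/r_\ve$ comes from the test field $Y(t) = (t/r_\ve)\sqrt{\lambda_\ve}\,\hat H(t)$: the curvature and torsion contributions vanish (as $\hat H\parallel H$) and the $\eta^2/(4\ve^2)$-contributions from $\|\hat\nabla^{2\ve}_{\gamma'}Y\|^2$ and from $-\tfrac{1}{4\ve^2}\|J_{\gamma'}Y\|^2$ cancel, leaving $I_{\Ho,\ve}(Y,Y) = \lambda_\ve/r_\ve$. The bound $(1-\lambda_\ve)/r_\ve$ follows from the identity
\[
\nabla^2 r_\ve(\nabla_\Ho r_\ve, \nabla_\Ho r_\ve) = \ve^2\,\nabla^2 r_\ve(\nabla_\V r_\ve, \nabla_\V r_\ve),
\]
a consequence of $\|\nabla^{g_\ve}r_\ve\|_{g_\ve}=1$, the decomposition $\nabla^{g_\ve}r_\ve = \nabla_\Ho r_\ve + \ve\nabla_\V r_\ve$, and the coincidence of the Bott and Levi-Civita Hessians on pure horizontal or pure vertical input pairs; the vertical index formula of Proposition~\ref{index formulas} applied to $Z(t) = (t/r_\ve)(\eta/\ve)S$ then yields $\nabla^2 r_\ve(\nabla_\V r_\ve, \nabla_\V r_\ve) \leq (1-\lambda_\ve)/(r_\ve\ve^2)$. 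For Part~(3), test with $Y(t) = f(t)E(t)$ where $E$ is $\nabla$-parallel unit horizontal with $E(r_\ve) = v\in\mathfrak{L}_J(H(r_\ve))$; since $\nabla\mathbf{J}=0$, the subspace $\mathfrak{L}_J(H(t))$ is preserved by parallel transport, so $T(\gamma',Y)=0$ and the mixed curvature terms vanish. Using~\eqref{H2a}, the index form reduces to $\int[(f')^2 - \lambda_\ve k_2 f^2]dt$, whose minimizer $f(t) = \phi_{-\lambda_\ve k_2}(t)/\phi_{-\lambda_\ve k_2}(r_\ve)$ yields the stated bound.

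For Part~(2), I would test with $Y(t) = g(t)\hat{\mathbf{J}}(t) + h(t)\hat H(t)$ subject to $g(0)=h(0)=0$, $g(r_\ve) = \sqrt{\lambda_\ve}$, $h(r_\ve)=0$. Direct computation using the frame rotation formulae and \eqref{H1a}, with the $\eta^2/(4\ve^2)$-terms again cancelling, reduces the index form to
\[
\int_0^{r_\ve}\!\left[(g')^2 + (h')^2 + \tfrac{\eta}{\ve}(gh' - g'h) + \lambda_\ve\bigl(\tfrac{1}{\ve} - k_1\bigr)g^2\right]dt.
\]
This is precisely the Jacobi-field variational problem of the model Sasakian manifold of constant pseudo-Hermitian sectional curvature $k_1$, whose explicit solution is provided in Appendix~2 in terms of $\phi_{-\lambda_\ve k_1}$ and $\Psi_{-\lambda_\ve k_1}$; substituting the minimizer produces the quoted bound. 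Part~(4), when $H\equiv 0$ (so $\gamma$ is purely vertical with $\eta = \sqrt{\ve}$): test with $Y(t) = f(t)E(t) + g(t)\mathbf{J}E(t)$ where $E$ is $\nabla$-parallel horizontal with $E(r_\ve)=v$; after similar cancellation the index form becomes a magnetic-type functional whose circular Euler-Lagrange solutions of period $2\pi\sqrt{\ve}$ have an explicit minimum value expressible via $\phi_{-1/\ve}$ at $r_\ve/2$. The principal obstacle is the two-dimensional variational minimization in Part~(2): one must solve the coupled Euler-Lagrange system, identify the minimizer with the Jacobi field of the model Sasakian space, and match the resulting expression to the stated form, in which the $\ve$-dependent term $\ve/\lambda_\ve$ added to $\Psi_{-\lambda_\ve k_1}$ reflects the interpolation between Riemannian and sub-Riemannian regimes. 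The cancellation of the $\eta^2/(4\ve^2)$-terms in each part, a structural feature of the adjoint connection $\hat\nabla^{2\ve}$ in the Sasakian setting, is the key simplification making these computations tractable.
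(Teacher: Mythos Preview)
Your treatment of Parts 1, 3, and 4 is essentially correct and matches the paper's approach. The cancellation of the $\eta^2/(4\ve^2)$ terms is indeed the structural simplification you identify.

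The genuine gap is in Part 2. You restrict the test field to be \emph{purely horizontal}, $Y=g\hat{\mathbf J}+h\hat H$, and apply the horizontal index formula $I_{\Ho,\ve}$ of Proposition~\ref{index formulas}. As you correctly compute, this produces the term $\tfrac{1}{\ve}\|T(\gamma',Y)\|^2=\tfrac{\lambda_\ve}{\ve}g^2$, yielding the functional
\[
\int_0^{r_\ve}\Bigl[(g')^2+(h')^2+\tfrac{\eta}{\ve}(gh'-g'h)+\lambda_\ve\bigl(\tfrac{1}{\ve}-k_1\bigr)g^2\Bigr]\,dt.
\]
The coefficient $\lambda_\ve/\ve$ cannot be removed by any choice of $g,h$. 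To see that this functional does \emph{not} minimize to the stated bound, take $\eta=0$ (horizontal geodesic, $\lambda_\ve=1$): then $h\equiv 0$ is optimal and the minimum over $g$ with $g(0)=0$, $g(r_\ve)=1$ is $\sqrt{1/\ve-k_1}\coth(\sqrt{1/\ve-k_1}\,r_\ve)$ for small $\ve$, which blows up like $\ve^{-1/2}$ as $\ve\to 0$. The paper's bound, by contrast, converges to $F_{\mathrm{Sas}}(r_0,k_1)$ in this limit. So the horizontal-only ansatz is strictly too weak.

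The paper's remedy is to allow the test field a \emph{vertical} component: it takes $Y(t)=\tfrac{1}{\lambda}G_\ve'(t)\,\mathbf J\gamma'_\Ho+\bigl(\text{coefficient}\bigr)S$ and applies the \emph{full} index form $I_\ve$, not $I_{\Ho,\ve}$. The vertical piece is engineered so that, after integration by parts, the $\tfrac{1}{\ve}\|T(\gamma',Y)\|^2$ contribution is absorbed (the field is an ``almost Jacobi field'' for the model space). This is exactly why the Jacobi field in Appendix~2, Lemma~\ref{lemma:ExJacobi}(b), has an $S$-component: it does not lie in $\Ho$, so it is \emph{not} a minimizer of your horizontal functional and your reference to Appendix~2 does not apply to your variational problem. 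To fix Part 2 you must abandon Proposition~\ref{index formulas}(a) and work with the general index lemma using a test field with a nonzero $S$-component.
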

\begin{remark} \label{rem:lambda0}
 Observe that the set of $x\in \M$ such that $\nabla_\Ho r_\ve(x) = 0$ is a bounded set of measure zero included in the leaf passing through $x_0$.
\end{remark}

\begin{proof}
In the proof,  to simplify notations, we often simply write $r= r_\ve(x)$ and $\lambda = \|\nabla_\Ho r_\ve (x)\|^2_g$. For any vector field $Y$ along a geodesic $\gamma$, we will use $Y'$ for the covariant derivative with respect to the adjoint connection $\hat{\nabla}^\ve_{\gamma'}$ and we identify vectors and their corresponding parallel vector field along $\gamma$.
\begin{enumerate}[1.]
\item From the index lemma, one has 
\[
\nabla_\Ho^2 r_\varepsilon (\nabla_\Ho r_\varepsilon,\nabla_\Ho r_\varepsilon) \le I_{\varepsilon} (\gamma,X,X)
\]
 where $\gamma$ is the unique length parametrized geodesic connecting $x_0$ to $x$ and $X=\frac{t}{r_\varepsilon} \gamma_\Ho'$. An immediate computation gives
 \[
 I_{\varepsilon} (\gamma,X,X)=\frac{\Gamma(r_\varepsilon)}{r_\varepsilon}.
 \]
 Therefore
 \[
 \nabla_\Ho^2 r_\varepsilon (\nabla_\Ho r_\varepsilon,\nabla_\Ho r_\varepsilon) \le \frac{\Gamma(r_\varepsilon)}{r_\varepsilon}.
 \]
 We now observe that
\[
\| \nabla_\Ho r_\varepsilon \|^2_\Ho +\varepsilon \| \nabla_\V r_\varepsilon \|^2_\V =1.
\]
As a consequence
\[
\nabla_\Ho \| \nabla_\Ho r_\varepsilon \|^2_\Ho +\varepsilon \nabla_\Ho \| \nabla_\V r_\varepsilon \|^2_\V =0
\]
and
\[
\nabla_\V \| \nabla_\Ho r_\varepsilon \|^2_\Ho +\varepsilon \nabla_\V \| \nabla_\V r_\varepsilon \|^2_\V =0.
\]
From the first equality we deduce
\[
\langle \nabla_\Ho \| \nabla_\Ho r_\varepsilon \|^2_\Ho,  \nabla_\Ho r_\varepsilon \rangle_\Ho+\varepsilon \langle \nabla_\Ho \| \nabla_\V r_\varepsilon \|^2_\V ,  \nabla_\Ho r_\varepsilon \rangle=0,
\]
and therefore,
\[
 \nabla^2 _\Ho r_\varepsilon ( \nabla_\Ho r_\varepsilon,\nabla_\Ho r_\varepsilon)+\varepsilon  \nabla^2 _{\Ho,\V} r_\varepsilon ( \nabla r_\varepsilon,\nabla r_\varepsilon)=0.
 \]
 Similarly, from the second equality we have
 \[
 \nabla^2 _{\V,\Ho} r_\varepsilon ( \nabla r_\varepsilon,\nabla r_\varepsilon) + \varepsilon \nabla^2 _\V r_\varepsilon ( \nabla_\V r_\varepsilon,\nabla_\V r_\varepsilon)=0.
 \]
 Since the Sasakian foliation is totally geodesic, it is easy to check that $ \nabla^2 _{\V,\Ho}= \nabla^2 _{\Ho,\V}$ (see \cite{BB}). Consequently,
 \[
  \nabla^2 _\Ho r_\varepsilon ( \nabla_\Ho r_\varepsilon,\nabla_\Ho r_\varepsilon)=\varepsilon^2 \nabla^2 _\V r_\varepsilon ( \nabla_\V r_\varepsilon,\nabla_\V r_\varepsilon).
  \]
From the vertical index form in Proposition \ref{index formulas} one has
\[
\nabla^2 _\V r_\varepsilon ( \nabla_\V r_\varepsilon,\nabla_\V r_\varepsilon) \le \frac{ \| \nabla_\V r_\varepsilon \|^2_\V}{\varepsilon r_\varepsilon}. 
\]
This yields
\[
\nabla^2 _\Ho r_\varepsilon ( \nabla_\Ho r_\varepsilon,\nabla_\Ho r_\varepsilon)\le \varepsilon  \frac{ \| \nabla_\V r_\varepsilon \|^2_\V }{ r_\varepsilon}=\frac{1- \| \nabla_\Ho r_\varepsilon \|^2_\Ho}{ r_\varepsilon}=\frac{1-\Gamma(r_\varepsilon)}{ r_\varepsilon}.
\]

\item The proof of 2. is the most difficult. The idea is to use an almost Jacobi field based on the computations of Appendix 2 (to which we refer for further details). Let $\gamma$ be the unit speed geodesic joining $x_0$ and $x$. 
Define 
$$C_\ve = \lambda\left(\psi'_{- \lambda k_1}(r)^2 - \psi_{-\lambda k_1}(r) \psi_{-\lambda k_1}''(r) \right) + \ve r \psi''_{-\lambda k_1}(r),$$
$$G_\ve(t) = \frac{1}{ C_\ve} \left(  \psi_{-\lambda k_1}'(r)  \psi_{- \lambda k_1}(t) + \left( \frac{\ve}{\lambda} r - \psi_{-\lambda k_1}(r)\right) \psi_{-\lambda k_1}'(t)   \right)$$and 
$$Y(t) = \frac{1}{\lambda}G'_\ve(t) J_Z \gamma_\Ho'  +\left( \ve \psi_{- \lambda k_1}'(r) t + G_\ve(t)\right) S,$$
This vector field satisfies
\begin{align*}
 & Y'' - T(\gamma', Y') + \frac{1}{\ve} J_{Y'} \gamma'_\Ho+ \lambda k_1  Y_\Ho - \frac{1}{\ve} J_{T(\gamma', Y)} \gamma'_\Ho=0
\end{align*}
with boundary conditions $Y(0) = 0$ and $Y(r) = \frac{1}{\sqrt{\lambda}} \mathbf{J} \gamma'_\Ho$. 

Computations with the index form and Lemma \ref{positive vertical} give us
\begin{align*}
I_\ve(Y,Y) & = \int_0^r \left\langle Y' , Y' - T(\gamma', Y) + \frac{1}{\ve} J_Y \gamma' - \frac{1}{\ve} J_{\gamma'} Y \right\rangle_{g_\ve} dt \\
& \quad - \int_0^r \left\langle R(\gamma', Y) Y + \frac{1}{\ve} J_{T(\gamma', Y)} Y,  \gamma' \right\rangle_{g_\ve} dt \\
& = \langle Y(r), Y'(r) \rangle_{g_\ve} - \frac{1}{\ve} \int_0^r \langle T(Y, Y'), \gamma' \rangle_{g_\ve} dt \\
& \quad  -  \int_0^r \left\langle Y , Y'' - T(\gamma, Y') + \frac{1}{\ve} J_{Y'} \gamma - \frac{1}{\ve} J_{T(\gamma',Y)} \gamma'  +  \| \gamma_\Ho' \|^2_g k_1 Y \right\rangle dt \\
& \quad - \int_0^r \left( \left\langle R(\gamma', Y) Y ,  \gamma' \right\rangle - k_1  \lambda \|Y_\Ho \|^2_g \right) dt \\
& = \langle Y(r), Y'(r) \rangle_{g_\ve}  - \int_0^r \left( \left\langle R(\gamma', Y) Y ,  \gamma' \right\rangle - k_1  \lambda \|Y_\Ho \|^2_g \right) dt   \\
& \leq  \lambda G''_\ve(r).
\end{align*}
We finally compute
\begin{align*}
 \lambda G_\ve''(r) & =  \frac{1}{C_\ve}\left( \lambda (\psi_{-\lambda k_1}'(r)  \psi_{- \lambda k_1}''(r)  - \psi_{-\lambda k_1}(r) \psi_{-\lambda k_1}'''(r)) + \ve r  \psi_{-\lambda k_1}'''(r) \right) \\
 & = \frac{r \psi_{-\lambda k_1}'''(r)}{C_\ve} ( \lambda \Psi_{-\lambda k_1 }(r) + \ve).
\end{align*}
From the proof of Lemma~\ref{lemma:ExJacobi}~(b), we know that $C_\ve =  r \psi''_{-\lambda k_1} (r) (\lambda  \Psi_{- \lambda k_1}(r/2) +\ve)$, therefore we obtain that
$$\lambda G''_\ve(r) = \frac{\psi'''_{-\lambda k_1}(r)}{\psi''_{-\lambda k_1}(r)} \frac{\lambda \Psi_{-\lambda k_1}(r) + \ve}{\lambda \Psi_{-\lambda k_1}(r/2) + \ve} = \frac{\phi'_{-\lambda k_1}(r)}{\phi_{-\lambda k_1} (r)} \frac{\lambda \Psi_{-\lambda k_1}(r) + \ve}{\lambda\Psi_{-\lambda k_1}(r/2) + \ve}.$$

\item Let $X$ be defined as 
\[
 X(t)= \frac{\phi'_{-\lambda k_2}(t)}{\phi_{-\lambda k_2}(r)} v_0.
\]
Observe that since this vector field solves the equation
$$ 0= X'' - T( \gamma', X') + \frac{1}{\ve} J_{X'}  \gamma'  + k_2 \lambda X - \frac{1}{\ve} J_{T( \gamma',X)}  \gamma' .$$
and satisfies $T(\gamma', X) = 0$, we have
$$I_\ve(X,X) \leq \langle X(r), X'(r) \rangle + \frac{1}{\ve} \int_0^r \langle X(t) , \mathbf{J} X'(t) \rangle_g dt   \leq \frac{\phi_{-k_2 \lambda_\ve}'(r)}{ \phi_{-k_2 \lambda_\ve}(r)} ,$$

\item Define a vector field $X$  by
\begin{align*}  \nonumber
X(t) &= \frac{1}{2 \left(1- \cos \frac{r}{\sqrt{\ve}} \right)} \left( \left( 1+  \cos \frac{r-t}{\sqrt{\ve}} - \cos \frac{r}{\sqrt{\ve}} - \cos \frac{t}{ \sqrt{\ve} }  \right) v_0 \right. \\ 
& \left.  \qquad \qquad \qquad \qquad - \left( \sin \frac{r-t}{\sqrt{\ve}} - \sin\frac{r}{\sqrt{\ve}} + \sin \frac{t}{ \sqrt{\ve} } \right) J_{\dot \gamma} v_0 \right) .
\end{align*} 
By computations similar to Lemma~\ref{lemma:ExJacobi}~(c), $X(t)$ is a Jacobi field. Hence
$$I_\ve(X,X) = \langle X(r), X'(r) \rangle = \frac{ \sin \frac{r}{ \sqrt{\ve} }}{2 \sqrt{\ve} \left(1- \cos \frac{r}{\sqrt{\ve}} \right)} = \frac{1}{2\sqrt{\ve}}  \cot \frac{r}{2 \sqrt{\ve} }  = \frac{1}{2\ve} \frac{\phi'_{1/\ve}(r/2)}{\phi_{1/\ve}(r/2)} .$$
\end{enumerate}
\end{proof}

\subsection{Horizontal Laplacian comparison theorem}

We now turn to the proof of Theorem \ref{th:SasakianComp3}. The first part of the theorem is a straightforward application of Theorem \ref{Hessian general3} and standard arguments, choosing an orthonormal basis at $x$:
\[
\left\{ \frac{1}{ \|\nabla_\Ho r_\ve\|_g}\nabla_\Ho r_\ve ,\frac{1}{ \|\nabla_\Ho r_\ve\|_g} \mathbf J  \nabla_\Ho r_\ve , v_1,\ldots, v_{n-2}\right\}
\]
with $v_1,\ldots,v_{n-2}$ orthonormal basis of the space $\mathfrak{L}_J (\nabla_\Ho r_\ve(x))$. From this result, we obtain the following statement.

\begin{theorem}[Horizontal Laplacian comparison theorem for $d_\ve$]
Let $(\M,\mathcal{F},g)$ be a Sasakian foliation. 
Let $k_1,k_2 \in \mathbb{R}$ and $\ve >0$. Assume that for every $X \in \Gamma^\infty(\mathcal{H})$, $\| X \|_g=1$,
$$\mathbf{K}_{\mathcal{H},J}(v,v) \ge  k_1, \qquad \mathbf{Ric}_{\mathcal{H},J^\perp}(v,v) \ge (n-2)k_2, \qquad v \in \ch, \| v\|_g = 1.$$
 Let $x \neq x_0$ which is not in the cut-locus of $x_0$. Define $\lambda_\ve(x) = \| \nabla_\Ho r_\ve(x)\|^2$ and assume $\lambda_\ve(x)>0$.
Then at $x$ we have
$$\Delta_\Ho r_\ve \le \frac{1}{r_\ve} \min \left\{ 1, \frac{1}{\lambda_\ve} -1\right\}  +   (n-2)  \frac{\phi_{-\lambda_\ve k_2}'(r_\ve)}{\phi_{-\lambda_\ve k_2}(r_\ve)} +  \frac{\phi'_{-\lambda_\ve k_1}(r_\ve)}{\phi_{-\lambda_\ve k_1}(r_\ve)} \frac{\lambda_\ve \Psi_{-\lambda_\ve k_1}(r_\ve) + \ve}{\lambda_\ve \Psi_{-\lambda_\ve k_1}(r_\ve/2) + \ve} .$$
\end{theorem}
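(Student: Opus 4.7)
The plan is to reduce the theorem to an application of the Horizontal Hessian comparison theorem (Theorem~\ref{Hessian general3}) by choosing a horizontal orthonormal frame at $x$ adapted to the Sasakian structure, then summing the three types of Hessian bounds. The hypothesis $\lambda_\ve(x) > 0$ ensures that $\nabla_\Ho r_\ve(x) \neq 0$, so I set
\[
e_1 = \lambda_\ve^{-1/2}\nabla_\Ho r_\ve(x), \qquad e_2 = \lambda_\ve^{-1/2}\mathbf{J}\nabla_\Ho r_\ve(x).
\]
Since $\mathbf{J}$ is an isometry of $\Ho$ and skew-symmetric with respect to $g_\Ho$, the pair $\{e_1,e_2\}$ is orthonormal and spans the orthogonal complement of $\mathfrak{L}_J(\nabla_\Ho r_\ve(x))$ in $\Ho_x$. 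Completing to an orthonormal frame $\{e_1,e_2,v_1,\dots,v_{n-2}\}$ of $\Ho_x$ with $v_1,\dots,v_{n-2}$ spanning $\mathfrak{L}_J(\nabla_\Ho r_\ve(x))$, one has
\[
\Delta_\Ho r_\ve(x) = \nabla^2 r_\ve(e_1,e_1) + \nabla^2 r_\ve(e_2,e_2) + \sum_{i=1}^{n-2}\nabla^2 r_\ve(v_i,v_i).
\]

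The first two terms can be read off directly from Theorem~\ref{Hessian general3}. Using bilinearity, $\nabla^2 r_\ve(e_1,e_1) = \lambda_\ve^{-1}\nabla^2_\Ho r_\ve(\nabla_\Ho r_\ve,\nabla_\Ho r_\ve)$, so part (1) gives $\nabla^2 r_\ve(e_1,e_1) \le r_\ve^{-1}\min\{1,\lambda_\ve^{-1}-1\}$ after dividing by $\lambda_\ve$ and splitting on whether $\lambda_\ve \le 1/2$ or $\lambda_\ve > 1/2$. Similarly, $\nabla^2 r_\ve(e_2,e_2) = \lambda_\ve^{-1}\nabla^2 r_\ve(\mathbf{J}\nabla_\Ho r_\ve,\mathbf{J}\nabla_\Ho r_\ve)$, and part (2), whose hypothesis is exactly $\mathbf{K}_{\Ho,J} \ge k_1$, delivers the Sasakian model term
\[
\frac{\phi'_{-\lambda_\ve k_1}(r_\ve)}{\phi_{-\lambda_\ve k_1}(r_\ve)}\cdot\frac{\lambda_\ve \Psi_{-\lambda_\ve k_1}(r_\ve)+\ve}{\lambda_\ve \Psi_{-\lambda_\ve k_1}(r_\ve/2)+\ve}.
\]

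The remaining sum over $v_1,\dots,v_{n-2}$ is the only point requiring care, because Theorem~\ref{Hessian general3}(3) is stated under a pointwise sectional curvature hypothesis $\langle R(X,Y)Y,X\rangle \ge k_2$ for unit $X$ and $Y \in \mathfrak{L}_J(X)$, whereas here the assumption is the strictly weaker Ricci-type bound $\mathbf{Ric}_{\Ho,J^\perp} \ge (n-2)k_2$. I would therefore re-run the argument of that proof simultaneously for all $v_i$, taking the test fields $X_i(t) = \bigl(\phi'_{-\lambda_\ve k_2}(t)/\phi_{-\lambda_\ve k_2}(r_\ve)\bigr)\,v_i(t)$ along the unit-speed $g_\ve$-geodesic $\gamma$ from $x_0$ to $x$, where $v_i$ is $\hat{\nabla}^\ve$-parallel-transported. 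Since in the Sasakian case $\gamma'_\Ho$ is $\hat{\nabla}^\ve$-parallel and $\mathbf{J}$ is parallel, the subspace $\mathfrak{L}_J(\gamma'_\Ho)$ is preserved by parallel transport, so $T(\gamma',X_i)=0$ along all of $\gamma$ as required in the proof of part (3). Summing the corresponding index-form inequalities and using the pointwise identity $\sum_{i=1}^{n-2}\langle R(\gamma'_\Ho,v_i)v_i,\gamma'_\Ho\rangle = \lambda_\ve\,\mathbf{Ric}_{\Ho,J^\perp}(e_1,e_1) \ge (n-2)k_2\lambda_\ve$ (which is the only curvature content needed) gives
\[
\sum_{i=1}^{n-2}\nabla^2 r_\ve(v_i,v_i) \le (n-2)\,\frac{\phi'_{-\lambda_\ve k_2}(r_\ve)}{\phi_{-\lambda_\ve k_2}(r_\ve)}.
\]
Adding the three estimates yields the stated inequality; the main conceptual obstacle is precisely the trace-versus-pointwise issue, and everything else is a routine assembly of Theorem~\ref{Hessian general3}.
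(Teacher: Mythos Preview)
Your approach is exactly the paper's: choose the adapted horizontal orthonormal frame $\{\lambda_\ve^{-1/2}\nabla_\Ho r_\ve,\ \lambda_\ve^{-1/2}\mathbf{J}\nabla_\Ho r_\ve,\ v_1,\dots,v_{n-2}\}$ and sum the three Hessian bounds from Theorem~\ref{Hessian general3}. You are in fact more careful than the paper on one point: the paper calls this ``a straightforward application of Theorem~\ref{Hessian general3} and standard arguments,'' but part~(3) of that theorem is stated under a sectional bound $\langle R(X,Y)Y,X\rangle\ge k_2$, whereas the Laplacian statement assumes only the trace bound $\mathbf{Ric}_{\Ho,J^\perp}\ge(n-2)k_2$; your fix --- run the index-form estimate with the common scalar profile for all $v_i$, sum, and use $\sum_i\langle R(\gamma'_\Ho,v_i)v_i,\gamma'_\Ho\rangle=\lambda_\ve\,\mathbf{Ric}_{\Ho,J^\perp}(e_1,e_1)$ --- is precisely the ``standard argument'' the paper has in mind. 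One inherited typo: the test field should read $X_i(t)=\dfrac{\phi_{-\lambda_\ve k_2}(t)}{\phi_{-\lambda_\ve k_2}(r_\ve)}\,v_i(t)$, with $\phi$ rather than $\phi'$ in the numerator, so that $X_i(0)=0$.
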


In order to obtain Theorem~\ref{th:SasakianComp3}, we need to prove that we can take the limit as $\ve \to 0$. Since the cut-locus of  $x_0$ for the metric $g_\varepsilon$ has measure zero, by usual arguments (Calabi's trick), we have in the sense of distributions:
 \[
\Delta_\Ho r_\varepsilon \le \frac{1}{r_\ve} \min \left\{ 1, \frac{1}{\lambda_\ve} -1\right\}  +   (n-2)  \frac{\phi_{-\lambda_\ve k_2}'(r_\ve)}{\phi_{-\lambda_\ve k_2}(r_\ve)} +  \frac{\phi'_{-\lambda_\ve k_1}(r_\ve)}{\phi_{-\lambda_\ve k_1}(r_\ve)} \frac{\lambda_\ve \Psi_{-\lambda_\ve k_1}(r_\ve) + \ve}{\lambda_\ve \Psi_{-\lambda_\ve k_1}(r_\ve/2) + \ve} .
\]

Indeed, from Calabi's lemma, one has $\M=\mathbf{Cut}_{\ve} (x_0) \cup \Omega$ where $\Omega$ is a star-shaped domain. Take now a family of smooth star-shaped domains $\Omega_n \subset \Omega$, with $\lim \Omega_n =\Omega$ obtained by shrinking $\Omega$ in the $r_\ve$ direction. Consider now a function $f \in C_0^\infty(\M)$ which is non-negative. One
has
\[
\int_\M r_\ve \Delta_{\Ho} f d\mu =-\int_\M \langle \nabla_\Ho f, \nabla_\Ho r_\ve \rangle d\mu=-\lim_{n \to \infty} \int_{\Omega_n} \langle \nabla_\Ho f, \nabla_\Ho r_\ve \rangle d\mu
\]
where we used in the last equality $\| \nabla_\Ho r_\ve \| \le 1$ and $\nabla_\Ho f$ bounded. Similarly,
\[
\int_\M r_\ve \Delta_{\V} f d\mu =-\int_\M \langle \nabla_\V f, \nabla_\V r_\ve \rangle d\mu=-\lim_{n \to \infty} \int_{\Omega_n} \langle \nabla_\V f, \nabla_\V r_\ve \rangle d\mu.
\]
From Green's formula, we have
\[
-\int_{\Omega_n} \langle \nabla_\Ho f, \nabla_\Ho r_\ve \rangle d\mu \le \int_{\Omega_n} (\Delta_{\Ho} r_\ve ) f d\mu+\int_{\Omega_n} \langle \nabla_\V f, \nabla_\V r_\ve \rangle_{g_\ve} d\mu+\ve \int_{\Omega_n} (\Delta_{\V} r_\ve ) f d\mu.
\]
When $n \to \infty$, we have $\int_{\Omega_n} \langle \nabla_\V f, \nabla_\V r_\ve \rangle_{g_\ve} d\mu+\ve \int_{\Omega_n} (\Delta_{\V} r_\ve ) f d\mu \to 0$.
This means that for every smooth, non-negative  and compactly supported function $f$,
\begin{align*}
 & \int_\M (\Delta_\Ho f)  \ r_\varepsilon d\mu \\
&\le  \int_\M \left(\frac{1}{r_\ve} \min \left\{ 1, \frac{1}{\lambda_\ve} -1\right\}  +   (n-2)  \frac{\phi_{-\lambda_\ve k_2}'(r_\ve)}{\phi_{-\lambda_\ve k_2}(r_\ve)} +  \frac{\phi'_{-\lambda_\ve k_1}(r_\ve)}{\phi_{-\lambda_\ve k_1}(r_\ve)} \frac{\lambda_\ve \Psi_{-\lambda_\ve k_1}(r_\ve) + \ve}{\lambda_\ve \Psi_{-\lambda_\ve k_1}(r_\ve/2) + \ve}  \right) f d \mu.
\end{align*}
Taking the limit as $\varepsilon \to 0$ yields the result, thanks to Lemma \ref{limit} and equations \eqref{FtoPhi}.

\subsection{Proof of Theorem~\ref{BMsubriem}}
The proof is relatively similar to the proof of Corollary \ref{BMyers}. We will only prove $k_1>0$, since the proof of $k_2 >0$ is almost identical. Let $\ve > 0$. Since
\[
\lim_{r_\ve \to \frac{2\pi}{\sqrt{\lambda_\ve k_1}}} \frac{\phi'_{-\lambda_\ve k_1}(r_\ve)}{\phi_{-\lambda_\ve k_1}(r_\ve)} \frac{\lambda_\ve \Psi_{-\lambda_\ve k_1}(r_\ve) + \ve}{\lambda_\ve \Psi_{-\lambda_\ve k_1}(r_\ve/2) + \ve}=-\infty,
\]
one deduces from Theorem~\ref{Hessian general3} that if $x$ is not in the cut-locus of $x_0$, then $d_\ve (x_0,x) \le \frac{2\pi}{\| \nabla_\Ho r_\ve(x)\| \sqrt{k_1}}$. We conclude from Lemma \ref{limit} that for almost every $x$, we have $d_0 (x_0,x) \le \frac{2\pi}{ \sqrt{k_1}}$.

To complete the proof, we note first that since the foliation is Riemannian, for every sufficiently small neighborhood $U$ in $\M$ such that $\pi_U: U \to \M_U = U/\mathcal{F}|U$ is smooth map of manifolds, there exist a Riemannian metric $g_U$ on $\M_U$ such that $g_\ch = \pi^* g_U$. Furthermore, if $R^U$ denotes the curvature of the Levi-Civita connection of $g_U$ and $R$ is the curvature of the Bott connection, then for any vector fields $X$ and $Y$ on $U$, we have
$$ \left\langle R(X_\Ho , Y_\Ho ) Y_\Ho , X_\Ho \right\rangle_{\Ho}  =  \left\langle R^U( \pi_{U,*} X , \pi_{U,*} Y ) \pi_{U,*}Y , \pi_{U,*} X \right\rangle_{g_U} $$
See \cite[Section~3.1]{GT1} for details. In conclusion, $k_1$ only depends on the Riemannian geometry of $\M_U$ for all sufficiently small neighborhoods $U$ of $\M$. Next. let $p: \tilde \M\to \M$ denote the universal cover of $\M$. Consider the foliation and metric $(\tilde \M, \tilde{ \mathcal{F}}, \tilde g)$ obtained by pulling these back from $\M$. The foliation $\mathcal{F}$ is then Riemannian with totally geodesic leaves since the equations of \eqref{RTG} only depend on local properties. The same is true for the requirements for the foliation to be Sasakian, so if we can show that its pseudo-Hermitian curvature $\tilde{ \mathbf{K}}$ will be bounded from below by $k_1$. However, this is true, since for every sufficiently small neighborhood $\tilde U$ such that $p :\tilde U \to U = p(U)$ is an isometry and such that $\tilde U / \tilde {\mathcal{F}}|\tilde U$ is a manifold, we have that $\tilde U / \tilde {\mathcal{F}}|\tilde U$ is isometric to $\M_U$ as well. The result follows.

\subsection{Vertical Hessian and Laplacian comparison theorems}

One can also easily prove vertical Hessian and Laplacian comparison theorems.

\begin{theorem}[Vertical Hessian comparison theorem]\label{Hessian general4}
Let $(\M,\mathcal{F},g)$ be a Sasakian foliation.  Let $k_1 \in \mathbb{R}$ and $\ve >0$. Let $x\neq x_0$ be a point that is not in the $g_\ve$ cut-locus of $x_0$. Assume that $\| \nabla^\Ho r_\ve(x)\|_g^2  > 0$ and  that for every $X \in \Gamma^\infty(\mathcal{H})$,  $\| X \|_\Ho=1$, 
\begin{align*}
\mathbf{K}_{\mathcal{H},J} (X,X) \ge  k_1.
\end{align*}
Then, for any $g$-unit vertical vector $z \in \mathcal{V}_{x}$,
\[
 \nabla^2 r_\ve(z,z) \leq\frac{ \phi_{-\lambda_\ve k_1}(r_\ve)}{\phi_{-\lambda_\ve k_1}(r_\ve) (\ve r_\ve -\psi_{-\lambda_\ve k_1} (r_\ve)) +\psi'_{-\lambda_\ve k_1} (r_\ve)^2 }  
\]
where $ \lambda_\ve=\| \nabla_\Ho r_\ve(x)\|_g^2$.
\end{theorem}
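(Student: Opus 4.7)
The plan is to follow exactly the template of Theorem~\ref{Hessian general3}(2), but with an ansatz adapted so that the endpoint value of the test vector field is a unit vertical vector rather than a unit horizontal vector. Let $\gamma\colon[0,r_\ve]\to\M$ be the unique unit-speed $g_\ve$-geodesic from $x_0$ to $x$, write $\lambda=\lambda_\ve=\|\nabla_\Ho r_\ve(x)\|_g^2>0$, and note that since the Sasakian foliation has $\dim\V=1$ we may take $z=S|_x$. Let $Y'$ denote the covariant derivative $\hat{\nabla}^\ve_{\gamma'}$ along $\gamma$, and identify vectors at $x$ with their parallel transports. From the index lemma,
\[
\nabla^2 r_\ve(z,z)\leq I_\ve(Y,Y)
\]
for any piecewise smooth $Y$ along $\gamma$ with $Y(0)=0$ and $Y(r_\ve)=z$.

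I would then search for $Y$ in the form
\[
Y(t)=A(t)\,\mathbf{J}\gamma'_\Ho+B(t)\,S,
\]
with scalar coefficients satisfying $A(0)=B(0)=0$, $A(r_\ve)=0$, and $B(r_\ve)=1$, chosen so that $Y$ solves the same almost-Jacobi equation as in the horizontal case,
\[
Y''-T(\gamma',Y')+\tfrac{1}{\ve}J_{Y'}\gamma'_\Ho+\lambda k_1 Y_\Ho-\tfrac{1}{\ve}J_{T(\gamma',Y)}\gamma'_\Ho=0.
\]
Using the Sasakian identities $T(\gamma',\mathbf{J}\gamma'_\Ho)=\lambda S$, $T(\gamma',S)=0$, $J_S\gamma'_\Ho=\mathbf{J}\gamma'_\Ho$ and $J_S\mathbf{J}\gamma'_\Ho=-\gamma'_\Ho$, this decouples into a linear ODE system for $(A,B)$ with constant coefficients (in $\lambda k_1$ and $1/\ve$), whose fundamental solutions can be written in closed form using $\phi_{-\lambda k_1}$ together with its primitives $\psi'_{-\lambda k_1}$ and $\psi_{-\lambda k_1}$. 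The two linear boundary conditions on $A$ and the two on $B$ then determine both functions and produce the normalizing denominator $\phi_{-\lambda k_1}(r_\ve)(\ve r_\ve-\psi_{-\lambda k_1}(r_\ve))+\psi'_{-\lambda k_1}(r_\ve)^2$ that appears in the statement.

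Once this ansatz is in hand, I would substitute into
\[
I_\ve(Y,Y)=\int_0^{r_\ve}\!\!\left\langle Y',Y'-T(\gamma',Y)+\tfrac{1}{\ve}J_Y\gamma'-\tfrac{1}{\ve}J_{\gamma'}Y\right\rangle_{\!g_\ve}\!dt-\int_0^{r_\ve}\!\!\left\langle R(\gamma',Y)Y+\tfrac{1}{\ve}J_{T(\gamma',Y)}Y,\gamma'\right\rangle_{\!g_\ve}\!dt,
\]
integrate by parts and apply the almost-Jacobi equation exactly as in the proof of Theorem~\ref{Hessian general3}(2). The interior integral collapses, using Lemma~\ref{positive vertical} and the curvature hypothesis $\mathbf{K}_{\ch,J}\geq k_1$ to show
\[
\langle R(\gamma',Y)Y,\gamma'\rangle_{g_\ve}\geq k_1\lambda\|Y_\Ho\|_g^2,
\]
so that $I_\ve(Y,Y)\leq\langle Y(r_\ve),Y'(r_\ve)\rangle_{g_\ve}$. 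Since $A(r_\ve)=0$ and $B(r_\ve)=1$, and $g_\ve$ rescales the vertical part by $1/\ve$, this boundary term is $\tfrac{1}{\ve}B'(r_\ve)$, and the explicit ODE solution yields precisely the fraction on the right-hand side of the theorem.

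The main obstacle is purely computational: verifying that the coupled ODE system for $(A,B)$ coming from the almost-Jacobi equation admits the claimed explicit solution in terms of $\phi_{-\lambda k_1}$, $\psi'_{-\lambda k_1}$, $\psi_{-\lambda k_1}$, and carefully tracking factors of $\ve$ and $\lambda$ in the boundary evaluation so that $\tfrac{1}{\ve}B'(r_\ve)$ matches the stated bound. All the conceptual ingredients—index form of $\hat{\nabla}^\ve$, the curvature reduction via Lemma~\ref{positive vertical}, and the almost-Jacobi technique—are already deployed in the horizontal case; only the algebraic shape of the test field and the endpoint normalization need to be changed.
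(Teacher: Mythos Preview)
Your proposal is correct and follows essentially the same route as the paper. The paper's (sketched) proof likewise invokes the template of Theorem~\ref{Hessian general3}(2) and specifies the test field explicitly as
\[
X=\left( C_1 \phi_{-\lambda_\ve k_1}(t) -\frac{C_0}{\ve}\psi'_{-\lambda_\ve k_1} (t) \right) J_Z \gamma'+\left(C_0\left( t-\frac{1}{\ve} \psi_{-\lambda_\ve k_1} (t)\right) +C_1 \psi'_{-\lambda_\ve k_1} (t)\right)Z,
\]
with $Z$ the $\hat{\nabla}^\ve$-parallel transport of $z$ and $C_0,C_1$ fixed by $X(r_\ve)=z$; this is exactly your ansatz $Y=A\,\mathbf{J}\gamma'_\Ho+B\,S$ made explicit (note $J_Z\gamma'=\mathbf{J}\gamma'_\Ho$ since $Z=S$ is $\hat\nabla^\ve$-parallel), and the remainder of the argument---integration by parts, the almost-Jacobi equation, Lemma~\ref{positive vertical}, and evaluation of the boundary term---is identical to what you outline.
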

Note that a simple computation shows 
$\psi'_{- \lambda_\ve k_1} ( r_\ve )^2 - \phi_{- \lambda_\ve k_1} ( r_\ve ) \psi_{- \lambda_\ve k_1} ( r_\ve ) > 0$ if $r_\ve > 0$ for $\ve \ge 0$. 
Actually, when $k_1 >0$, 
\begin{multline*}
\psi'_{-\lambda_\ve k_1} ( r_\ve )^2 - \phi_{-\lambda_\ve k_1} ( r_\ve ) \psi_{-\lambda_\ve k_1} ( r_\ve )
\\
=
\frac{ 4 }{ \lambda_\ve^2 k_1^2 }
\sin \left( \frac{ \sqrt{ \lambda_\ve k_1 } r_\ve }{ 2 } \right) 
\left( 
\sin \left( \frac{ \sqrt{ \lambda_\ve k_1 } r_\ve }{ 2 } \right) 
-
\frac{ \sqrt{ \lambda_\ve k_1 } r_\ve }{ 2 } \cos \left( \frac{ \sqrt{ \lambda_\ve k_1 } r_\ve }{ 2 } \right)
\right)
> 0
\end{multline*}
since
$r_\ve \le r_0 \le 2\pi / \sqrt{ k_1} \le 2 \pi / \sqrt{ \lambda_\ve k_1 }$ 
by Theorem~\ref{BMsubriem}. Other cases can be discussed similarly. 
\begin{proof}
The proof is somewhat similar to the proof of Theorem \ref{Hessian general3} (2), so we omit the details. The idea is to consider the vector field defined along a geodesic $\gamma$ by
\[
X=\left( C_1 \phi_{-\lambda_\ve k_1}(t) -\frac{C_0}{\ve}\psi'_{-\lambda_\ve k_1} (t) \right) J_Z \gamma'+\left(C_0\left( t-\frac{1}{\ve} \psi_{-\lambda_\ve k_1} (t)\right) +C_1 \psi'_{-\lambda_\ve k_1} (t)\right)Z,
\]
where $Z$ is parallel transport of $z$ along $\gamma$ for the adjoint connection $\hat{\nabla}^\varepsilon =\nabla+\frac{1}{\varepsilon} J $ and $C_0,C_1$ are the constants such that $X(r_\ve)=z$.
\end{proof}
As an immediate corollary, we deduce:

\begin{corollary}[Vertical Laplacian comparison theorem]\label{VL}
Let $(\M,\mathcal{F},g)$ be a Sasakian foliation. Let $k_1 \in \mathbb{R}$ and $\ve >0$. Assume that for every $X \in \Gamma^\infty(\mathcal{H})$, $\| X \|_g=1$,
\begin{align*}
\mathbf{K}_{\mathcal{H},J} (X,X) \ge  k_1.
\end{align*}
Let $x \neq x_0$ which is not in the cut-locus of $x_0$. Define $\lambda_\ve(x) = \| \nabla_\Ho r_\ve(x)\|^2$ and assume $\lambda_\ve(x)>0$. Then at $x$ we have
\[
\Delta_\V r_\ve \le  \frac{  \phi_{-\lambda_\ve k_1}(r_\ve)}{\phi_{-\lambda_\ve k_1}(r_\ve) (\ve r_\ve -\psi_{-\lambda_\ve k_1} (r_\ve)) +\psi'_{-\lambda_\ve k_1} (r_\ve)^2 }  .
\]
Therefore, outside of the $d_0$ cut-locus of $x_0$ and globally on $\M$ in the sense of distributions,
\[
\Delta_\V r_0 \le \frac{ \phi_{- k_1}(r_0)}{-\phi_{- k_1}(r_0)\psi_{- k_1} (r_0) +\psi'_{-k_1} (r_0)^2 }  .
\]
\end{corollary}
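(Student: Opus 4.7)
The plan is to deduce the corollary from the Hessian bound of Theorem~\ref{Hessian general4} in two steps: first take a vertical trace at fixed $\ve > 0$, and then pass to the sub-Riemannian limit $\ve \to 0$ in the sense of distributions.

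For the first inequality, the key remark is that on a Sasakian foliation the vertical distribution $\V$ is one-dimensional and globally spanned by the Reeb vector field $S$, which has unit $g$-length. Because the leaves are totally geodesic one has $\nabla_S S = 0$, and because the Reeb flow acts by isometries the $\mu$-divergence of $S$ vanishes; combining these two facts gives the identity $\Delta_\V f = \nabla^2 f(S,S)$ for every smooth $f$. Applying Theorem~\ref{Hessian general4} with $z = S$ then immediately yields the stated upper bound for $\Delta_\V r_\ve$ at any $x \neq x_0$ that lies outside $\mathbf{Cut}_\ve(x_0)$ and satisfies $\lambda_\ve(x) > 0$.

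For the distributional bound on $\Delta_\V r_0$, I would adapt the Calabi-type argument used just after Theorem~\ref{th:SasakianComp3}. For any nonnegative $f \in C_0^\infty(\M)$, write
\[
\int_\M r_\ve \, \Delta_\V f \, d\mu = -\int_\M \langle \nabla_\V f, \nabla_\V r_\ve \rangle_\V \, d\mu = -\lim_{n \to \infty} \int_{\Omega_n} \langle \nabla_\V f, \nabla_\V r_\ve \rangle_\V \, d\mu,
\]
where the $\Omega_n$ form a smooth star-shaped exhaustion of $\M \setminus \mathbf{Cut}_\ve(x_0)$ obtained by shrinking in the $r_\ve$ direction. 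Green's formula on each $\Omega_n$ rewrites the right-hand side as $\int_{\Omega_n} f \, \Delta_\V r_\ve \, d\mu$ plus a boundary contribution of the correct sign, into which the pointwise bound from the first step can be inserted. Sending first $n \to \infty$ and then $\ve \to 0$, one uses $r_\ve \to r_0$, $\ve r_\ve \to 0$, and Lemma~\ref{limit} (which yields $\lambda_\ve \to 1$ almost everywhere, the exceptional set being confined to the leaf through $x_0$ by Remark~\ref{rem:lambda0}) to conclude via dominated convergence. A uniform integrable majorant is available: in the case $k_1 > 0$ the sub-Riemannian Bonnet--Myers theorem (Theorem~\ref{BMsubriem}) keeps $r_\ve$ uniformly bounded on $\operatorname{supp}(f)$, while positivity of the denominator is ensured by the explicit inequality recorded right below Theorem~\ref{Hessian general4}.

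The main obstacle is sequencing the two limits $n \to \infty$ and $\ve \to 0$ consistently: one must check that the star-shaped boundary terms have the right sign uniformly in $\ve$, and that the right-hand side of the Hessian bound is dominated by an $\ve$-independent integrable function on $\operatorname{supp}(f)$. Both verifications proceed exactly as in the horizontal-Laplacian case treated after Theorem~\ref{th:SasakianComp3}, so the argument reduces to a transcription of that proof with $\Delta_\Ho$ replaced by $\Delta_\V$ and with the Hessian input supplied by the $z = S$ specialization of Theorem~\ref{Hessian general4}.
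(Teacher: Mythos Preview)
Your proposal is correct and matches the paper's intended argument: the paper labels this an ``immediate corollary'' of Theorem~\ref{Hessian general4}, and your first step (take $z=S$, use that $\V$ is one--dimensional with $\nabla_S S=0$ and $\mathrm{div}_\mu S=0$ so that $\Delta_\V r_\ve=\nabla^2 r_\ve(S,S)$) is exactly what is meant. For the $\ve\to0$ limit you correctly invoke the same Calabi/Green's formula mechanism used for $\Delta_\Ho$ after Theorem~\ref{th:SasakianComp3}, together with Lemma~\ref{limit} and Remark~\ref{rem:lambda0}.

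One caveat worth noting: you assert that, after applying Green's formula on $\Omega_n$ to $\Delta_\V$ \emph{alone}, the boundary term $\int_{\partial\Omega_n} f\,\langle \nabla_\V r_\ve,\nu\rangle_g\,dA$ has ``the correct sign''. This is not automatic: for a general star--shaped exhaustion the outward normal $\nu$ is transverse to $\nabla^{g_\ve} r_\ve$, not to its vertical component, so $\langle \nabla_\V r_\ve,\nu\rangle_g$ need not be nonnegative pointwise. The paper's own argument in the horizontal case sidesteps this by applying Green's formula to the \emph{full} operator $\Delta_\ve=\Delta_\Ho+\ve\Delta_\V$ (whose boundary term does have the right sign on a $g_\ve$--star--shaped domain) and then arguing that the cross terms vanish as $n\to\infty$. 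The same maneuver works here; alternatively you can write $\partial\Omega_n$ as a level set of $r_\ve$ minus a smooth angular function and check the sign directly. Either way this is a routine fix, not a gap in your strategy.
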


\begin{remark}
When $k_1=0$, the theorem yields $\Delta_\V r_0 \le \frac{12}{r^3_0}$. 
\end{remark}
\subsection{Measure contraction properties}

As an application of Theorem~\ref{th:SasakianComp3}, 
we will show  measure contraction properties of the metric measure spaces $(\M,d_\ve,\mu)$, $\varepsilon \ge 0$ (see  \cite{Ohta,St1,St2} for standard corollaries of the measure contraction properties). 
To state it, we prepare some notations. Let $\ve \ge 0$.
Let $e_t\colon C ( [ 0, 1 ]; \M ) \to \M$ be the evaluation map 
for $t \in [ 0 , 1 ]$ given by $e_t ( \gamma ) = \gamma_t$. 
For a probability measure $\nu$ on $\M$ and $x_0 \in \M$, 
there exists a probability measure $\Pi$ on the space of 
(constant speed) minimal geodesics $\mathrm{Geo}_\ve (\M)$ 
on $( \M , g_\ve )$ 
satisfying $( e_0 )_\sharp\Pi = \delta_{x_0}$ and $( e_1 )_\sharp \Pi = \nu$. 
Such a $\Pi$ is called a dynamic optimal coupling from $\delta_{x_0}$ to $\nu$. 
In our case, 
we have a measurable map $G_\ve : \M \to \mathrm{Geo}_\ve (\M)$ so that 
each $G_\ve (x)$ is a minimal $g_\ve$ geodesic from $x_0$ to $x$ 
by a measurable selection theorem (the existence of such map is classical when $\ve >0$ and we refer to \cite{LLZ} in the case $\ve=0$).   
Then, the push-forward measure $G_\sharp \nu$ indeed provides a dynamic optimal coupling 
from $\delta_{x_0}$ to $\nu$. 
For $\gamma \in \mathrm{Geo}_\ve (\M)$, 
we denote the $g_\ve$-length of $\gamma$ by $\ell (\gamma)$
(we omit $\ve$ for simplicity of notations). Let $\mu$ denote the volume measure of $g$.
For $A \in \mathcal{B} (\M)$ (Borel set in $\M$) with $\mu (A) \in ( 0, \infty )$, 
let $\bar{\mu}_A$ be a probability measure on $\M$ given by 
the normalization of the restriction of $\mu$ on $A$: 
$$\mu : = \mu (A)^{-1} \mu |_A.$$ 
Again, we write $\lambda_\ve: \mathbb{M} \to \mathbb{R}$ for the function $\lambda_\ve(x) = \| \nabla_\Ho r_\ve\|_\Ho(x)^2$. By slight abuse of notation, let us also define $\lambda_\ve: \mathrm{Geo}_\ve(\mathbb{M}) \to \mathbb{R}$ such that for any constant speed geodesic $\gamma \in  \mathrm{Geo}_\ve(\mathbb{M})$ starting at $x_0$, $\lambda_\ve(\gamma)=\frac{\| \gamma_\Ho'(t)\|^2_\Ho}{l(\gamma)^2}$, $t \in [0,1]$ which is a constant by \eqref{observation}. 
Additionally, let us define a function $\Phi_{\ve, , \lambda, \kappa}$ 
and $\Xi_{\ve, \kappa}$ for $\ve > 0$, $\lambda \in (0,1]$ and $\kappa \in \R$ by 
\begin{align*}
\Phi_{\ve, \lambda, \kappa} (r) : = & 
\begin{cases}
\ds
  \lambda
  ( 2 \kappa^{-1} ( 1 - \phi'_{-\kappa} (r) ) - r \phi_{-\kappa} (r) ) 
  + \ve 
  \phi_{-\kappa} (r),
& \kappa \neq 0, 
\\
\ds 
r ( \lambda r^2 + 12 \ve )^{3/2},
& \kappa = 0 , 
\end{cases}
\\
\Xi_{\ve, \kappa} (r) : = & 
 \frac{  \phi_{- \kappa}(r)}{\phi_{- \kappa}(r) (\ve r -\psi_{- \kappa} (r)) +\psi'_{- \kappa} (r)^2 } .
\end{align*} 
We also write $\Phi_{\kappa} : = \Phi_{0,1,\kappa}$.  

\begin{theorem}[Measure contraction property] 
\label{th:MCP}
Let $(\M,\mathcal{F},g)$ be a Sasakian foliation.
Assume that for constants $k_1, k_2 \in \mathbb{R}$ and for every $X \in \Gamma^\infty(\mathcal{H})$ with $\| X \|_g=1$,
\begin{align}\label{H1bis_again}
\mathbf{K}_{\mathcal{H},J} (X,X) \ge  k_1,
\end{align}
and,
\begin{align}\label{H2bis_again}
\mathbf{Ric}_{\mathcal{H},J^\perp} (X,X) \ge (n-2)k_2 .
\end{align}
\begin{enumerate}[\rm(1)]
\item
For any $\ve > 0$, $A \in \mathcal{B} (\M)$ with $\mu (A) \in ( 0 , \infty )$ and $x_0 \in \M$, 
there exists a dynamic optimal coupling $\Pi$ on the space of 
(constant speed) minimal geodesics $\mathrm{Geo}_\ve (\M)$ from $\delta_{x_0}$ to $\bar{\mu}_A$ 
such that the following holds: 
\begin{multline}\label{eq:pre-MCP}
\mu \ge 
( e_t )_\sharp 
\Bigg( 
  \frac{
    t^{1+\min \{ 1, \lambda_\ve^{-1} - 1 \} } 
    \phi_{-\lambda_\ve k_2}^{n-2} (t \ell (\gamma))
    \Phi_{\ve, \lambda_\ve, \lambda_\ve k_1} (t \ell (\gamma)) 
  }
  { 
    \phi_{-\lambda_\ve k_2}^{n-2} (\ell (\gamma))
    \Phi_{\ve,\lambda_\ve, \lambda_\ve k_1} (\ell (\gamma)) 
  } 
\\
\times 
  \exp \left( 
      \ve \int_{\ell (\gamma)}^{t \ell (\gamma)} \Xi_{\ve , \lambda_{\ve} k_1} (r) dr 
  \right)
  \mu (A)
\Pi \Bigg). 
\end{multline}
\item
For any $A \in \mathcal{B} (\M)$ with $\mu (A) \in ( 0 , \infty )$ and $x_0 \in \M$,  
there exists a dynamic optimal coupling $\Pi$ from $\delta_{x_0}$ to $\bar{\mu}_A$ on the space of 
(constant speed) minimal geodesics $\mathrm{Geo}_0 (\M)$
such that the following holds: 
\begin{equation*} 
\mu \ge 
( e_t )_\sharp 
\left( 
  \frac{
    t
    \phi_{- k_2}^{n-2} (t \ell (\gamma))
    \Phi_{k_1} (t \ell (\gamma)) 
  }
  { 
   \phi_{- k_2}^{n-2} (\ell (\gamma))
    \Phi_{k_1} (\ell (\gamma)) 
  } 
  \mu (A)
\Pi \right). 
\end{equation*}

\end{enumerate}
\end{theorem}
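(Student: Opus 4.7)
The approach is to reduce the measure contraction property to a pointwise Jacobian estimate for the geodesic interpolation map $\Phi_t^\ve := e_t \circ G_\ve$, and to then establish this Jacobian bound via a direction-by-direction Jacobi field analysis relying on the explicit comparison vector fields constructed in the proofs of Theorems~\ref{Hessian general3} and~\ref{Hessian general4}. Part~(2) will be obtained from part~(1) by passing to the limit $\ve \downarrow 0$.

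By a standard Borel measurable selection, choose $G_\ve \colon \M \to \mathrm{Geo}_\ve(\M)$ sending each $y$ to a constant-speed minimal $g_\ve$-geodesic from $x_0$ to $y$, and set $\Pi := (G_\ve)_\sharp \bar\mu_A$. Since the $d_\ve$ cut-locus of $x_0$ has $\mu$-measure zero, $\Phi_t^\ve$ is smooth on a $\mu$-full measure subset, and an elementary change of variables reduces the inequality~\eqref{eq:pre-MCP} to the pointwise bound $\mathcal{J}_t^\ve(y) := |\det d\Phi_t^\ve(y)|_\mu \ge h_\ve(y)$, where $h_\ve(y)$ is the factor in the statement evaluated at $\gamma = G_\ve(y)$. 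In $g_\ve$-polar coordinates based at $x_0$, $\mathcal{J}_t^\ve(y) = t \cdot \mathcal{A}_\ve(t\ell, u)/\mathcal{A}_\ve(\ell, u)$ for $\ell = r_\ve(y)$ and $u$ the initial direction of $\gamma$, and $\mathcal{A}_\ve$ decomposes into directional Jacobi-field magnitudes along the $g_\ve$-orthonormal frame adapted to $T_y \M = \R\,\nabla_\Ho r_\ve \oplus \mathfrak{L}_J(\nabla_\Ho r_\ve) \oplus \R\,\mathbf{J}\nabla_\Ho r_\ve \oplus \V_y$.

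The first two blocks decouple (the torsion vanishes on $\mathfrak{L}_J$), while the $\mathbf{J}\nabla_\Ho r_\ve$- and vertical directions form a genuinely coupled two-dimensional system because of the Sasakian torsion identity $T(X,\mathbf{J}X) = \|X\|^2 S$. Using the index inequality from Appendix~1 against the (almost) Jacobi fields constructed in the proofs of Theorem~\ref{Hessian general3}(1)--(3) and Theorem~\ref{Hessian general4}, each block is bounded below by the corresponding Jacobi-field determinant in the Sasakian model of constant curvature, whose explicit form (worked out in Appendix~2) yields: $t^{\min\{1, \lambda_\ve^{-1} - 1\}}$ from the radial block, $(\phi_{-\lambda_\ve k_2}(t\ell)/\phi_{-\lambda_\ve k_2}(\ell))^{n-2}$ from $\mathfrak{L}_J$, and the combination $(\Phi_{\ve,\lambda_\ve,\lambda_\ve k_1}(t\ell)/\Phi_{\ve,\lambda_\ve,\lambda_\ve k_1}(\ell)) \cdot \exp(\ve \int_\ell^{t\ell} \Xi_{\ve,\lambda_\ve k_1}(r)\, dr)$ from the coupled block. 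Since $\lambda_\ve$ is constant along $\gamma$ by~\eqref{observation}, these expressions depend only on $\ell(\gamma)$ and $\lambda_\ve(\gamma)$, and their product with the polar radial factor $t$ matches $h_\ve(y)$ exactly.

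For part~(2), pass to the limit $\ve \downarrow 0$ in~(1). Lemma~\ref{limit} gives $\lambda_\ve(y) \to 1$ for $\mu$-a.e.\ $y$, so the factor in~(1) converges pointwise to that in~(2) and the exponential correction tends to~$1$; dominated convergence, together with the pointwise convergence of minimizing $g_\ve$-geodesics to the (a.e.\ unique) sub-Riemannian minimizers in the fat Sasakian setting, allows us to take the limit in the integral form of~(1). The principal technical obstacle is the coupled two-dimensional block: its $2 \times 2$ Jacobi determinant does not decouple into two individual Hessian contributions, and the specific combination of $\Phi_{\ve,\lambda_\ve,\lambda_\ve k_1}$ with the exponential $\Xi$ correction emerges only from the explicit Sasakian model Jacobi fields of Appendix~2.
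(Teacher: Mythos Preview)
Your overall architecture is sound and the limit argument for part~(2) is essentially the paper's. The reduction to a pointwise Jacobian estimate in $g_\ve$-polar coordinates, the use of the measurable selection $G_\ve$, and the passage $\ve\downarrow 0$ via Lemma~\ref{limit} and convergence of $g_\ve$-geodesics to sub-Riemannian ones all match what the paper does.

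Where your proposal diverges from the paper is in the mechanism for the Jacobian bound itself, and here there is a genuine gap. You propose to bound the angular Jacobian $\mathcal{A}_\ve(t\ell,u)/\mathcal{A}_\ve(\ell,u)$ by a \emph{block-wise Jacobi-field determinant} comparison along the frame $\{\nabla_\Ho r_\ve,\ \mathfrak{L}_J,\ \mathbf{J}\nabla_\Ho r_\ve,\ S\}$. But this frame is $(n+1)$-dimensional while the angular Jacobian is only $n$-dimensional; the direction $\nabla_\Ho r_\ve$ is \emph{not} $g_\ve$-orthogonal to $\gamma'$ and does not correspond to a Jacobi-invariant subspace, so there is no ``radial block'' producing a factor $t^{\min\{1,\lambda_\ve^{-1}-1\}}$ in the way you describe. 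More fundamentally, the index lemma gives upper bounds on diagonal entries $\nabla^2 r_\ve(v,v)$ of the Hessian, and individual Hessian bounds do not translate directly into block determinant lower bounds unless one first passes through the trace.

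The paper's route is shorter and avoids this issue entirely: it uses the classical identity $\partial_r \log A_{\ve,x_0}(r,\xi)=\Delta_\ve r_\ve$, where $\Delta_\ve=\Delta_\Ho+\ve\Delta_\V$ is the $g_\ve$-Laplace--Beltrami operator, and then applies the already-proven Laplacian comparison theorems for $\Delta_\Ho r_\ve$ and $\Delta_\V r_\ve$ (the $d_\ve$-version of Theorem~\ref{th:SasakianComp3} and Corollary~\ref{VL}). The term $\frac{1}{r_\ve}\min\{1,\lambda_\ve^{-1}-1\}$ appears as one summand in the bound for $\Delta_\Ho r_\ve$; integrating it produces the factor $t^{\min\{1,\lambda_\ve^{-1}-1\}}$, and a further computation identifies $\frac{\phi'_{-\lambda_\ve k_1}}{\phi_{-\lambda_\ve k_1}}\cdot\frac{\lambda_\ve\Psi+\ve}{\lambda_\ve\Psi(\cdot/2)+\ve}$ with $\Phi'_{\ve,\lambda_\ve,\lambda_\ve k_1}/\Phi_{\ve,\lambda_\ve,\lambda_\ve k_1}$, so that integration yields the $\Phi$-ratio and the exponential $\Xi$-correction directly. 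In other words, the Hessian bounds you cite from Theorems~\ref{Hessian general3} and~\ref{Hessian general4} enter only through their \emph{sum} (the Laplacian), not block by block. Replacing your Jacobi-determinant decomposition by this trace-then-integrate step closes the gap and gives exactly the paper's proof.
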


\begin{remark}
Theorem~\ref{th:MCP} (2) asserts the same inequality 
as in \cite[Theorem 1.1]{LLZ}. 
As we will see below, our approach gives 
an alternative simple proof of this result. 
\end{remark}

A strong connection between Laplacian comparison theorems and  
measure contraction properties in an infinitesimal form 
are known 
(see \cite{Lee}, \cite[Section 6.2]{Ohta_H} for instance; 
cf.\ \eqref{eq:v-distort} below). 
Here we will give a detailed proof for completeness. 
One reason why we prefer it is on the fact that Laplacian comparison theorem 
is described in terms of Laplacian and distance 
while measure contraction property is formulated in terms of 
distance and measure. Laplacian, distance and measure are 
mutually related in Riemannian geometry but the same relation is 
not obvious (even not always true) in sub-Riemannian setting. 
Another reason is on the fact we are formulating the measure contraction 
property in an integrated form. Thus the presence of cut locus should 
be treated somehow. It can be problematic when $\ve = 0$. 
Thus we first show the measure contraction property when $\ve > 0$ 
and let $\ve \to 0$ instead of showing it directly from the Laplacian 
comparison theorem when $\ve = 0$. 

\begin{proof}
(1) In the case $\varepsilon > 0$, 
we closely follow the argument in \cite[Section 3]{Ohta}. 
For $x_0 \in \mathbb{M}$, 
let $\mathsf{D}_\ve (x_0) \subset T_{x_0} \mathbb{M}$ be 
the maximal domain of the $g_\ve$-exponential map 
$\exp_{\ve,x_0}$ at $x_0$. 
That is, 
$\mathbb{M} \setminus \exp_{\ve,x_0} ( \mathsf{D}_\ve (x_0) )$ 
is the $g_\ve$ cut-locus $\mathbf{Cut}_{\ve}(x_0)$ of $x_0$. 
Let $\mu_\ve$ be the Riemannian measure for $g_\ve$. 
By definition, we can easily see $\mu_\ve = \ve^{-1/2} \mu$. 
Thus it suffices to show the assertion for $\mu_\ve$ instead of $\mu$, 
since our goal \eqref{eq:pre-MCP} is linear in $\mu$. 
We denote the density of $( \exp_{\ve,x_0}^{-1} )_{\sharp} \mu_\ve$ 
in polar coordinate $(r, \xi)$ 
($r > 0$, $\xi \in T_{x_0} \mathbb{M}$, $|\xi| = 1$)
on $\mathsf{D} (x_0)$ by $A_{\ve,x_0} ( r , \xi )$. 
Then we know 
\begin{equation} \label{eq:v-distort}
\frac{\partial}{\partial r} A_{\ve,x_0} ( r , \xi ) 
= 
\Delta_\ve r_\ve (\exp_{\ve,x_0} (r,\xi))\cdot A_{\ve, x_0} ( r, \xi ), 
\end{equation}
where $\Delta_\ve$ is the Laplace-Beltrami operator for $g_\ve$ 
(see \cite[Theorem 3.8]{Chavel} or \cite[Section 9.1]{Petersen}). 
Since $\Delta_\ve = \Dh + \ve \Dv$, by Theorem~\ref{th:SasakianComp3} 
and Corollary~\ref{VL} (see the comment after Theorem~\ref {Hessian general4} also) together with \eqref{FtoPhi} and 
a simple computation, 
\begin{align*}
\Delta_\ve r_\ve 
& \le 
\frac{1}{r_\ve} \min \left\{ 1, \frac{1}{\lambda_\ve} -1\right\}  
+ (n-2) 
\frac{\phi_{- \lambda_\ve k_2}'(r_\ve)}{\phi_{- \lambda_\ve k_2}(r_\ve)} 
+ 
\frac{\phi'_{-\lambda_\ve k_1}(r_\ve)}{\phi_{-\lambda_\ve k_1}(r_\ve)} 
\frac{\lambda_\ve \Psi_{-\lambda_\ve k_1}(r_\ve) + \ve}
{\lambda_\ve \Psi_{-\lambda_\ve k_1}(r_\ve/2) + \ve} 
\\
& \hspace{30em} 
+ \ve \Xi_{\ve , \lambda_\ve k_1} (r_\ve) 
\\
& = 
\frac{1}{r_\ve} \min \left\{ 1, \frac{1}{\lambda_\ve} -1\right\}  
+ (n-2) 
\frac{\phi'_{-\lambda_\ve k_2} (r_\ve)}{\phi_{-\lambda_\ve k_2}(r_\ve)} 
+ 
 \frac{ \Phi_{\ve, \lambda_\ve, \lambda_\ve k_1}' ( r_\ve ) }
{ \Phi_{\ve, \lambda_\ve, \lambda_\ve k_1} ( r_\ve ) }
+ \ve \Xi_{\ve , \lambda_\ve k_1} (r_\ve) 
\end{align*}
When $\lambda_\ve > 0$. 
Recall that, as observed after \eqref{observation}, 
$\lambda_\ve = \lambda_\ve (\exp_{\ve,x_0} ( r, \xi ))$ does not 
depend on $r$. 
Thus, we regard it constant when we fix $\xi$. 
Then by integrating \eqref{eq:v-distort} in $r$ with applying this inequality, 
for $0 < r_1 < r_2$ with $(r_2 , \xi ) \in \mathsf{D}(x_0)$, we obtain 
\begin{align} \label{eq:pre-MCP0}
\frac{ A_{\ve, x_0} ( r_2 , \xi ) }{ A_{\ve , x_0} ( r_1 , \xi ) }
\le &
\frac{\Theta( \lambda_\ve , r_2 )}{\Theta( \lambda_\ve , r_1 )} ,
\end{align}
where
\begin{align*}
\Theta ( \lambda, r ) 
: = & 
r^{\min \{ 1, \lambda^{-1} - 1 \} } 
  \phi_{-\lambda k_2}^{n-2} (r)
  \Phi_{\ve,\lambda, \lambda k_1} (r) 
  \exp \left( \ve \int_c^r \Xi_{\ve, \lambda k_1} (s) ds \right) 
\end{align*}
for some $c > 0$. 
Let $f \in C^\infty_0 (\M)$ supported on $\exp_{\ve, x_0} ( t \overline{\mathsf{D} (x_0)} )$ 
with $f \ge 0$. 
It suffices to show the integral of $f$ by $\mu$ is 
larger than the integral of $f$ by the right hand side of \eqref{eq:pre-MCP}, 
since the measure on the right hand side of \eqref{eq:pre-MCP} 
is supported on $\exp_{\ve,x_0} ( t \overline{\mathsf{D} (x_0)} )$ by definition. 
Let $G_\ve : \M \to \mathrm{Geo}_\ve (\M)$ be the map mentioned at the beginning 
of this subsection for $\nu = \bar{\mu}_A$. 
Since 
$
( ( \exp_{\ve,x_0}^{-1} )_\sharp \mu ) 
( t \overline{\mathsf{D}(x_0)} \setminus t \mathsf{D} (x_0) ) 
= 0
$, 
with keeping Remark~\ref{rem:lambda0} in mind, we have 
\begin{align*}
\int_{\M} f \, d \mu 
& = 
\int_{t \mathsf{D}(x_0)}
  f ( \exp_{\ve, x_0} ( r , \xi ) )
  A_{\ve, x_0} ( r , \xi ) 
\, d r d \xi
\\
& = 
\int_{\mathsf{D}(x_0)}
  f ( \exp_{\ve, x_0} ( t r , \xi ) )
  t A_{\ve, x_0} ( t r , \xi ) 
\, d r d \xi
\\
& \ge 
\int_{\mathsf{D}(x_0)}
  f ( \exp_{\ve, x_0} ( t r , \xi ) )
\frac{\Theta (\lambda_\ve , tr ) }{ \Theta (\lambda_\ve , r )} 
A_{\ve , x_0} ( r , \xi )\, d r d \xi 
\\
& = 
\int_{\mathsf{D}(x_0)}
  f ( e_t ( G_\ve ( \exp_{\ve, x_0} (r, \xi) ) ) ) 
\frac{\Theta (\lambda_\ve , tr ) }{ \Theta (\lambda_\ve , r )} 
A_{\ve , x_0} ( r , \xi )\, d r d \xi 
\\
& = 
\int_{\mathrm{Geo}_\ve(\M)}
  f ( e_t (\gamma) ) \mu ( A )
\frac{
  \Theta (\lambda_\ve , t \ell (\gamma) ) 
}
{
  \Theta (\lambda_\ve , \ell (\gamma) )
} 
\, ( G_\ve )_\sharp \bar{\mu}_A ( d \gamma ). 
\end{align*}
Here the inequality follows from \eqref{eq:pre-MCP0}, and 
we have used $\mu (\mathbf{Cut}_\ve (p) ) = 0$ in the last identity.%
\medskip

\noindent
(2) Subdividing $A$ by taking an intersection with annuli 
(with respect to $d_0$), 
we may assume that $A$ is bounded. 
Then our claim may be studied only in a (closed) 
$d_0$-ball of sufficiently large radius. 
Let $f \in C_0^\infty (\M)$ with $f \ge 0$. 
Following a naive idea, 
we apply \eqref{eq:pre-MCP} to integrations of $f$ 
and let $\ve \downarrow 0$ with the Fatou lemma. 
Indeed, the density of the right hand side of 
\eqref{eq:pre-MCP0} is non-negative.
By the proof of (1), we may assume also that 
$\Pi$ in \eqref{eq:pre-MCP} is 
of the form $( G_\ve )_\sharp \bar{\mu}_A$. 
By Lemma~\ref{limit}, 
we have $\lim_{\ve \downarrow 0} \lambda_\ve (x) = 1$ $\mu$-a.e. This implies that  $\lim_{\ve \to 0} \lambda_{\ve}( G_\ve(x))=1$.
By (1), it is sufficient to take the limit of $\lambda_\ve$ 
in Fatou's lemma. 
Note that $\ell ( G_\ve (x) ) = d_\ve ( x_0 , x )$ and hence 
$\ell ( G_\ve (x) ) \to d_0 ( x_0, x ) = \ell ( G_0 (x) )$ as $\ve \downarrow 0$. 
Thus the conclusion follows 
once we have $e_t (G_\ep (x)) \to e_t ( G_0 (x) )$ for $\mu$-a.e.~$x$. 
Suppose $x \notin \mathbf{Cut}_0 (x_0)$. Let us take a decreasing sequence 
$( \ve_n )_{ n \in \mathbb{N}}$ with $\ve_n \to 0$. 
Since $d_\ve$ is non-increasing in $\ve$,
\[
d_{\ep_1}  ( x , e_t ( G_{\ve_n} (x) ) ) 
\le 
d_{\ep_n} ( x , e_t ( G_{\ve_n} (x) ) ) 
= 
( 1 - t ) d_{\ep_n} ( x_0 , x ). 
\]
Since the right hand side converges to $( 1 - t ) d_0 ( x_0 , x )$, 
$( e_t ( G_{\ve_n} (x) ) )_{n \in \mathbb{N}}$ is a $d_{\ve_1}$-bounded sequence. 
Thus there is a subsequence $( \ve_{n(k)} )_{k \in \mathbb{N}}$ such that 
$\lim_{k \to \infty} e_t ( G_{\ve_{n(k)}} (x) )$ exists. 
We denote the limit by $y$. 
Then, for $k < k'$, 
\begin{align*}
d_{\ve_{n(k)}} ( x_0 , e_t ( G_{\ve_{n(k')}} (x) ) ) 
& \le
d_{\ve_{n(k')}} ( x_0, e_t ( G_{\ve_{n(k')}} (x) ) ) 
= 
t d_{\ve_{n(k')}} ( x_0 , x ), 
\\ 
d_{\ve_{n(k)}} ( e_t ( G_{\ve_{n(k')}} (x) ) , x )
& \le
d_{\ve_{n(k')}} (  e_t ( G_{\ve_{n(k')}} (x) ) , x )
= 
( 1 - t ) d_{\ve_{n(k')}} ( x_0 , x ). 
\end{align*}
By letting $k' \to \infty$ and $k \to \infty$, we have 
\[
d_0 ( x_0, y )
\le 
t d_0 ( x_0, x ) , \   
d_0 ( y, x )
\le 
( 1 - t ) d_0 ( x_0, x ).
\]
By the triangle inequality $d_0 ( x_0, x ) \le d_0 ( x_0 , y ) + d_0 ( y, x )$, 
both of the last two inequalities must be equalities. 
Since $x \notin \mathbf{Cut}_0 (x_0)$, 
$G_0 (x)$ is a unique minimal geodesic from $x_0$ to $x$. 
Hence we have $y = e_t ( G_0 (x) )$. Thus the claim holds 
since the limit $y$ is independent of the choice of a subsequence 
and $\mathbf{Cut}_0 (x_0)$ is of $\mu$-measure zero. 
\end{proof}

We now recall the following definition (see \cite{Ohta,St1,St2}).

\begin{definition}
Let $(X,\delta,\nu)$ be a metric measure space. Assume that for every $x_0 \in X$ there exists a Borel set $\Omega_{x_0}$ of full measure in $X$ (that is $\nu(X \setminus \Omega_{x_0})=0$) such that any point of $\Omega_{x_0}$ is connected to $x_0$ by a unique distance minimizing geodesic  $t \to \phi_{t,x_0} (x)$, $t \in [0,1]$, starting at $x$ and ending at $x_0$. We say that $(X,\delta,\nu)$  satisfies the measure contraction property $\mathbf{MCP}(0,N)$, $N\ge 0$, if for every $x_0 \in X$, $t \in [0,1]$ and Borel set $U$,
\[
\nu ( \phi_{t,x_0} (U  ) ) \ge (1-t)^N  \nu (U).
\]
\end{definition}

\begin{remark}
On a $N$-dimensional Riemannian manifold, the measure contraction property $\mathbf{MCP}(0,N)$ is known to be equivalent to non-negative Ricci curvature, see \cite{Ohta}. However, as the next corollary shows, on a $N_1$-dimensional Riemannian manifold, the measure contraction property $\mathbf{MCP}(0,N_2)$ with $N_2 > N_1$ does not imply any Ricci lower bound (such phenomenon was already observed by Rifford \cite{Rifford}).
\end{remark}

As an easy consequence of Theorem \ref{th:MCP}, we deduce:

\begin{corollary}
Let $(\M,\mathcal{F},g)$ be a Sasakian foliation  such that
\[
\mathbf{K}_{\Ho,J} \ge 0,\ \mathbf{Ric}_{\Ho,J^\perp} \ge 0.
\]
Then, for every $\ve > 0$, the metric measure space $(\M,d_\ve, \mu)$ satisfies the measure contraction property $\mathbf{MCP}(0,n+4)$. 
Moreover, the metric measure space $(\M,d_0, \mu)$ satisfies the measure contraction property $\mathbf{MCP}\allowbreak(0,n+3)$ and the constant $n+3$ is sharp.
\end{corollary}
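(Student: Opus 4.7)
The plan is to apply Theorem~\ref{th:MCP} with $k_1 = k_2 = 0$, which is admissible under the curvature hypotheses $\mathbf{K}_{\Ho,J} \ge 0$ and $\mathbf{Ric}_{\Ho,J^\perp} \ge 0$, and then simplify the resulting contraction densities to read off the MCP exponent. In each case, once the density $D(t, \gamma)$ furnished by Theorem~\ref{th:MCP} is shown to satisfy $D(t, \gamma) \ge t^N$ pointwise, testing the pushforward inequality $\mu \ge (e_t)_\sharp(D(t, \gamma) \mu(A) \Pi)$ against the Borel set $B = e_t(\{G_\ve(x) : x \in A\}) = \phi_{1-t, x_0}(A)$ and setting $s = 1 - t$ yields $\mu(\phi_{s, x_0}(A)) \ge (1-s)^N \mu(A)$, i.e.~$\mathbf{MCP}(0, N)$.

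For the sub-Riemannian statement I would specialize Theorem~\ref{th:MCP}(2). With $k_1 = k_2 = 0$ one has $\phi_0(r) = r$ and $\Phi_{k_1}(r) = \Phi_{0,1,0}(r) = r(r^2)^{3/2} = r^4$, so the density collapses to
\[
\frac{t \cdot (t\ell)^{n-2} \cdot (t\ell)^4}{\ell^{n-2} \cdot \ell^4} = t^{n+3},
\]
which gives $\mathbf{MCP}(0, n+3)$ directly.

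For each $\ve > 0$ I would apply Theorem~\ref{th:MCP}(1). Using $\Phi_{\ve, \lambda, 0}(r) = r(\lambda r^2 + 12\ve)^{3/2}$, $\Xi_{\ve, 0}(r) = 12/(r(r^2 + 12\ve))$, and the explicit primitive
\[
\int_\ell^{t\ell} \Xi_{\ve, 0}(r) \, dr = \frac{1}{2\ve} \log \frac{t^2(\ell^2 + 12\ve)}{t^2 \ell^2 + 12\ve},
\]
the density in Theorem~\ref{th:MCP}(1) reduces to
\[
D(t, \gamma) = t^{\, n + \min\{1,\, \lambda_\ve^{-1} - 1\}} \left( \frac{\lambda_\ve t^2 \ell^2 + 12\ve}{\lambda_\ve \ell^2 + 12\ve} \right)^{3/2} \left( \frac{t^2(\ell^2 + 12\ve)}{t^2 \ell^2 + 12\ve} \right)^{1/2}.
\]
The central task is then to verify the uniform lower bound $D(t, \gamma) \ge t^{n+4}$ for all $t \in [0,1]$, $\ell > 0$ and $\lambda_\ve \in (0,1]$. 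After dividing by $t^{n+4}$ and substituting $x = \lambda_\ve \ell^2 / (12\ve)$, this reduces to a one-parameter rational inequality that I would split on whether $\lambda_\ve \ge 1/2$ (so $\min\{1, \lambda_\ve^{-1} - 1\} = \lambda_\ve^{-1} - 1$) or $\lambda_\ve \le 1/2$ (so the $\min$ equals $1$), in each case resolving the resulting inequality by elementary monotonicity in $x$.

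Sharpness of $n+3$ in the sub-Riemannian statement follows from the Heisenberg group example recalled at the start of Section~\ref{sec:HTypeComp}: in $\bH^k$ the horizontal Laplacian comparison $\Delta_\Ho r_0 \le (n+2)/r_0$ is saturated, which forces the sub-Riemannian Jacobian to grow like $r^{n+2}$ and rules out $\mathbf{MCP}(0, N)$ for any $N < n+3$. I expect the principal obstacle to be the density inequality $D(t, \gamma) \ge t^{n+4}$ for intermediate values of $\lambda_\ve$, where neither the prefactor $t^{\min\{1, \lambda_\ve^{-1} - 1\}}$ nor the two rational factors are individually tight; the extremal regime is the limit $\lambda_\ve \to 1/2$, $\ell \to \infty$, where the three contributions precisely balance and equality is approached.
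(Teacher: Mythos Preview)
Your approach is correct and, for the sub-Riemannian part, essentially coincides with the paper's: specializing Theorem~\ref{th:MCP}(2) at $k_1=k_2=0$ gives the density $t^{n+3}$ immediately, and the sharpness via the Heisenberg group is the standard example the paper alludes to.

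For the $\ve>0$ case, however, you take a genuinely different route from the paper. You feed $k_1=k_2=0$ straight into Theorem~\ref{th:MCP}(1), obtain the full density
\[
D(t,\gamma)=t^{\,n+\min\{1,\lambda_\ve^{-1}-1\}}\left(\frac{\lambda_\ve t^2\ell^2+12\ve}{\lambda_\ve\ell^2+12\ve}\right)^{3/2}\left(\frac{t^2(\ell^2+12\ve)}{t^2\ell^2+12\ve}\right)^{1/2},
\]
and then must verify $D\ge t^{n+4}$ by a two-case rational inequality in the variables $t,\lambda_\ve,\ell^2/\ve$. This works (the extremal regime $\lambda_\ve=1/2$, $\ell\to\infty$ is indeed where equality is approached, and one can check coefficient-by-coefficient that the polynomial $(\lambda t^2 a+1)^3(a+1)-t^{6-2\min}(\lambda a+1)^3(t^2a+1)$ is nonnegative), but the verification is not entirely trivial, especially for $\lambda_\ve\in(1/2,1)$ where the exponent $4-\min$ is irrational in $\lambda_\ve$.

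The paper bypasses this computation by a clean trick: rather than invoking Theorem~\ref{th:MCP}(1), it goes back to the Laplacian comparison and \emph{coarsens} it first. Using the elementary bound
\[
\frac{\lambda_\ve\Psi_0(r_\ve)+\ve}{\lambda_\ve\Psi_0(r_\ve/2)+\ve}=\frac{\lambda_\ve r_\ve^2/3+\ve}{\lambda_\ve r_\ve^2/12+\ve}\le 4,
\]
one gets $\Delta_\Ho r_\ve\le \frac{1}{r_\ve}\min\{1,\lambda_\ve^{-1}-1\}+\frac{n+2}{r_\ve}$, and together with $\ve\Delta_\V r_\ve\le \frac{1}{r_\ve}-\frac{r_\ve}{12\ve+r_\ve^2}$ this yields the much simpler density
\[
t^{\,n+3+\min\{1,\lambda_\ve^{-1}-1\}}\sqrt{\frac{12\ve+\ell^2}{12\ve+t^2\ell^2}},
\]
from which $\ge t^{n+4}$ follows from the two trivial observations $\min\{1,\lambda_\ve^{-1}-1\}\le 1$ and $12\ve+\ell^2\ge 12\ve+t^2\ell^2$. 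In short, the paper trades a sharper density for a one-line endgame; your route keeps the sharper density and pays for it with a genuine (though elementary) inequality. Both are valid; if you want to shorten your argument, the coarsening step above is the simplification to borrow.
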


This corollary is interesting because, as observed earlier in Remark \ref{ricci}, the Ricci tensor of the metric $g_\ve$ for the Levi-Civita connection blows up to $-\infty$ in the directions of the horizontal space when $\ve \to 0$. Such similar situations are pointed out in Lee \cite{Lee2}. 

\begin{proof}
Under the assumption, from Theorem \ref{th:SasakianComp3} and Corollary \ref{VL}, we have
\[
\Delta_\Ho r_\ve \le \frac{1}{r_\ve} \min \left\{ 1, \frac{1}{\lambda_\ve} -1\right\} + \frac{n+2}{r_\ve},
\]
and
\[
\Delta_\V r_\ve \le \Xi_{\ve , 0} (r_\ve) = \frac{1}{\ve r_\ve +\frac{r_\ve^3}{12}}.
\]
Therefore,
\[
\Delta_\Ho r_\ve +\ve \Delta_\V r_\ve \le \frac{1}{r_\ve} \min \left\{ 1, \frac{1}{\lambda_\ve} -1\right\} +\frac{n+3}{r_\ve}-\frac{ r_\ve}{12 \ve +r_\ve ^2 }.
\]
As before, we deduce
that for any $A \in \mathcal{B} (\M)$ with $\mu (A) \in ( 0 , \infty )$, 
there exists a dynamic optimal coupling $\Pi$ from $\delta_{x_0}$ to $\bar{\mu}_A$ 
such that the following holds: 
\begin{align} \label{eq:MCP0}
\mu  
& \ge 
( e_t )_\sharp 
\left( t^{n+3+\min \{ 1, \lambda_\ve^{-1} - 1 \} } \sqrt{ \frac{ 12 \ve + \ell (\gamma)^2 }{ 12 \ve +t^2 \ell (\gamma)^2 } }
      \mu (A)\,\Pi \right)
\\ \nonumber 
& \ge  
( e_t )_\sharp \left( t^{n+4} \mu (A)\, \Pi \right). 
\end{align}
Thus the former assertion holds. 
Letting $\ve \to 0$ in \eqref{eq:MCP0} 
yields the latter result.
\end{proof}

\subsection{Horizontal Hessian comparison theorem for the sub-Riemannian distance}

To conclude the paper, we comment on the Hessian comparison theorem in the case that was let open, namely $\ve=0$. It does not seem easy to directly take the limit $\ve \downarrow 0$ in Theorem~\ref{Hessian general3}. However, one can still prove some Hessian comparison theorem for the sub-Riemannian distance with the aid of Theorem~\ref{th:MCP}. For simplicity of the discussion, we restrict ourselves to the case of non-negative horizontal sectional curvature and focus on the worst possible direction in the Hessian comparison theorem.
 
We first prove the following slight improvement of Theorem~\ref{Hessian general3}, in the case $k_1=k_2=0$.

\begin{theorem}\label{Hessian comparison}
Let $(\M,\mathcal{F},g)$ be a Sasakian foliation. Let $\ve >0$. Assume that the horizontal sectional curvature of the Bott connection is non-negative,  namely for all horizontal fields $X,Y$,
\[
\langle R(X,Y)Y, X \rangle_\Ho \ge 0.
\]
 Let $x \neq x_0$ which is not in the $g_\ve$ cut-locus of $x_0$. Let $X \in T_x \M$ which is horizontal and such that $\| X \|_\Ho=1$. 
Then, one has at $x$,
\[
\nabla_\Ho^2 r_\varepsilon (X,X) \le \frac{1}{r_\varepsilon} +\frac{ \langle   X, \nabla_\Ho r_\varepsilon \rangle^2_\Ho}{r_\ve} + \frac{1}{4 \varepsilon} \frac{ r_\varepsilon \| T(X,\nabla_\Ho r_\ve) \|^2_\V}{1+ \frac{\| \nabla_\Ho r_\varepsilon \|^2 r_\varepsilon^2}{12 \varepsilon}}.
\] 
\end{theorem}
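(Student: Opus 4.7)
The plan is to apply the horizontal index form of Proposition~\ref{index formulas}(a) along the unique arc-length $g_\ve$-minimizing geodesic $\gamma\colon[0,r_\ve]\to\M$ from $x_0$ to $x$. In the Sasakian setting the structural identities $T(U,V)=\langle\mathbf J U,V\rangle_\Ho S$, $\nabla\mathbf J=0$, $J_Z=\langle Z,S\rangle_g\mathbf J$, together with $\|\gamma'_\V\|_g^2=\ve(1-\lambda_\ve)$ where $\lambda_\ve:=\|\nabla_\Ho r_\ve\|_g^2$ is constant along $\gamma$ by \eqref{observation}, give $(\nabla_Y T)(Y,\gamma')=0$ and $\|J_{\gamma'}Y\|_g^2=\ve(1-\lambda_\ve)\|Y\|_g^2$ for any horizontal $Y$. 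Moreover the $g_\ve$-geodesic equation produces $\hat\nabla^{2\ve}_{\gamma'}\gamma'_\Ho=-\omega\,\mathbf J\gamma'_\Ho$ with $\omega:=\langle\gamma',S\rangle_g/(2\ve)$ constant along $\gamma$ and satisfying $\omega^2=(1-\lambda_\ve)/(4\ve)$. Using Lemma~\ref{positive vertical} and the non-negativity of horizontal sectional curvature to discard the term $\langle R(\gamma',Y)\gamma',Y\rangle_g\le 0$, the index form reduces to
\begin{equation*}
I_{\Ho,\ve}(Y,Y)\le\int_0^{r_\ve}\!\Bigl(\|\hat\nabla^{2\ve}_{\gamma'}Y\|_g^2+\tfrac1\ve\langle\mathbf J\gamma'_\Ho,Y\rangle_\Ho^2-\omega^2\|Y\|_g^2\Bigr)dt.
\end{equation*}

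The trial field is built from the $g$-orthogonal decomposition $X=X_{\mathbf J}+X'$ at $x$, where $X_{\mathbf J}$ is the projection of $X$ onto $\R\mathbf J\nabla_\Ho r_\ve$ and $X'$ its complement in $\Ho_x$, so that $\|X_{\mathbf J}\|_g^2=\|T(X,\nabla_\Ho r_\ve)\|_\V^2/\lambda_\ve$ and $\|X'\|_g^2=1-\|X_{\mathbf J}\|_g^2$. Take $\hat\nabla^{2\ve}$-parallel transports $\widetilde{X'}(t)$ and $\widetilde{X_{\mathbf J}}(t)$ along $\gamma$, which preserve the $g$-norm and the orthogonality $\langle\widetilde{X'},\widetilde{X_{\mathbf J}}\rangle_g\equiv 0$ since $\hat\nabla^{2\ve}$ is metric and preserves $\Ho$, and set
\begin{equation*}
Y(t)=\frac{t}{r_\ve}\,\widetilde{X'}(t)+G(t)\,\widetilde{X_{\mathbf J}}(t),
\end{equation*}
where $G\colon[0,r_\ve]\to\R$ is the $k_1=0$ specialization of the scalar Jacobi function from the proof of Theorem~\ref{Hessian general3}(2), normalized by $G(0)=0$ and $G(r_\ve)\widetilde{X_{\mathbf J}}(r_\ve)=X_{\mathbf J}$. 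The $G\widetilde{X_{\mathbf J}}$-piece reproduces verbatim the computation of Theorem~\ref{Hessian general3}(2) with $k_1=0$: using $\phi_0(t)=t$, $\psi_0(t)=t^3/6$, $\Psi_0(t)=t^2/3$ in the identity $\lambda_\ve G''(r_\ve)=\phi_0'(r_\ve)/\phi_0(r_\ve)\cdot(\lambda_\ve\Psi_0(r_\ve)+\ve)/(\lambda_\ve\Psi_0(r_\ve/2)+\ve)$ derived there produces exactly the last term on the right-hand side, $\frac{1}{4\ve}\cdot\frac{r_\ve\|T(X,\nabla_\Ho r_\ve)\|_\V^2}{1+\lambda_\ve r_\ve^2/(12\ve)}$. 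The linear piece $\tfrac{t}{r_\ve}\widetilde{X'}$ yields the kinetic contribution $\|X'\|_g^2/r_\ve$ along with oscillatory torsion integrals governed by the coupled ODE $u'=-\omega v$, $v'=\omega u$ for $u(t):=\langle\gamma'_\Ho,\widetilde{X'}\rangle_\Ho$ and $v(t):=\langle\mathbf J\gamma'_\Ho,\widetilde{X'}\rangle_\Ho$, with boundary values $u(r_\ve)=\langle X,\nabla_\Ho r_\ve\rangle_\Ho$ and $v(r_\ve)=0$.

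The main obstacle is the final bookkeeping: combining the oscillatory integral $\tfrac1\ve\int_0^{r_\ve}(t/r_\ve)^2 v(t)^2\,dt$ with the subtraction $-\omega^2\int(t/r_\ve)^2\|X'\|_g^2\,dt$ and the torsion cross term $\tfrac2\ve\int(t/r_\ve)G(t)\,v(t)w(t)\,dt$, where $w(t):=\langle\mathbf J\gamma'_\Ho,\widetilde{X_{\mathbf J}}\rangle_\Ho$ satisfies the analogous oscillation with $w(r_\ve)=\langle X,\mathbf J\nabla_\Ho r_\ve\rangle_\Ho$. The precise form of $G$, chosen to neutralize this cross contribution and to match the Sasakian Jacobi structure, is what forces these three pieces to consolidate into exactly the radial surplus $\langle X,\nabla_\Ho r_\ve\rangle_\Ho^2/r_\ve$; once this identity is verified, summing the three contributions and using $\|X'\|_g^2\le 1$ yields the claimed inequality.
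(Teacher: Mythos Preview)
Your approach has a genuine, structural gap: restricting to purely horizontal trial fields in Proposition~\ref{index formulas}(a) cannot produce the stated bound. The paper's proof uses the \emph{general} index lemma (Lemma~\ref{index lemma}) with a trial field
\[
Y(t)=-\tfrac{1}{2\ve}t(t-r_\ve)J_Z\gamma'+\tfrac{t}{r_\ve}X+\Bigl(t-\tfrac{1}{2\ve}\bigl(\tfrac{t^3}{3}-\tfrac12 r_\ve t^2\bigr)\|\gamma'\|_\Ho^2\Bigr)Z+\tfrac{t^2}{2r_\ve}T(\gamma',X),
\]
whose vertical excursion (vanishing at the endpoints) is precisely what absorbs the torsion cost $\tfrac{1}{\ve}\|T(\gamma',Y)\|^2$. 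Your assertion that the piece $G(t)\widetilde{X_{\mathbf J}}(t)$ ``reproduces verbatim the computation of Theorem~\ref{Hessian general3}(2)'' is incorrect: the trial field there carries the vertical term $\bigl(\ve\psi_{-\lambda k_1}'(r)t+G_\ve(t)\bigr)S$, and the identity $I_\ve(Y,Y)\le\lambda G_\ve''(r)$ depends on that vertical component through an integration by parts against the full index form, not the horizontal one.

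To see concretely why a purely horizontal trial field fails, take a horizontal geodesic ($\lambda_\ve=1$, $\omega=0$) and $X=\mathbf J\gamma'_\Ho(r_\ve)$. Then $\widetilde{X_{\mathbf J}}=\mathbf J\gamma'_\Ho$ is $\hat\nabla^{2\ve}$-parallel, $w(t)\equiv 1$, and your reduced index form becomes
\[
I_{\Ho,\ve}(G\widetilde{X_{\mathbf J}},G\widetilde{X_{\mathbf J}})=\int_0^{r_\ve}\Bigl(G'(t)^2+\tfrac{1}{\ve}G(t)^2\Bigr)\,dt.
\]
The infimum of this functional over all $G$ with $G(0)=0$, $G(r_\ve)=1$ is $\tfrac{1}{\sqrt\ve}\coth(r_\ve/\sqrt\ve)$, attained at $G(t)=\sinh(t/\sqrt\ve)/\sinh(r_\ve/\sqrt\ve)$. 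But the bound claimed in the theorem for this $X$ is $\tfrac{1}{r_\ve}+\tfrac{3r_\ve}{12\ve+r_\ve^2}$, and with $s:=r_\ve/\sqrt\ve$ one has $\coth s>\tfrac{12+4s^2}{s(12+s^2)}$ for every $s>0$ (compare the expansions $\tfrac{1}{s}+\tfrac{s}{3}+\cdots$ versus $\tfrac{1}{s}+\tfrac{s}{4}+\cdots$ near $s=0$, and $1$ versus $4/s$ as $s\to\infty$). Hence no choice of $G$, and a fortiori no horizontal trial field, can yield the theorem's inequality; the vertical excursion is not a convenience but the essential mechanism. Your final paragraph, which asserts that the oscillatory and cross terms ``consolidate into exactly the radial surplus,'' cannot therefore be correct, and indeed no computation is offered there.
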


\begin{proof}
Let $\gamma$ be the unique length parametrized geodesic connecting $x_0$ to $x$. We consider at $x$ the vertical vector
\[
Z=\frac{1}{2} \frac{T(X,\gamma')}{1+ \frac{r_\varepsilon^2 \| \gamma'\|_\Ho^2}{12 \varepsilon}}.
\]
We still denote by $Z$ the vector field along $\gamma$ which is obtained by parallel transport of $Z$  for the Bott connection $\nabla$.   We will also still denote by $X$ the vector field along $\gamma$ which is obtained by parallel transport of $X \in T_x \M$  for the adjoint connection $\hat{\nabla}^\varepsilon =\nabla+\frac{1}{\varepsilon} J $. We now consider the vector field $Y$ defined along $\gamma$ by:
\[
Y(t)=-\frac{1}{2\varepsilon} t (t-r_\varepsilon) J_Z \gamma' +\frac{t}{r_\varepsilon} X+\left( t -\frac{1}{2\varepsilon}\left( \frac{t^3}{3} -\frac{1}{2} r_\varepsilon t^2\right)\| \gamma'\|_\Ho^2 \right)Z+\frac{t^2}{2r_\varepsilon} T(\gamma',X).
\]
 From Lemma \ref{lemma hessian} and  the index lemma, one has
\[
\nabla_\Ho^2 r_\varepsilon (X,X)  \le  \int_0^{r_\varepsilon} \left( \langle \nabla^\varepsilon_{\gamma'}  Y, \hat{\nabla}^\varepsilon_{\gamma'} Y  \rangle_\varepsilon-\langle \hat{R}^\varepsilon (\gamma',Y)Y ,\gamma'\rangle_\varepsilon \right) dt.
\]
We now observe that
\begin{align*}
\langle \hat{R}^\varepsilon (\gamma',Y)Y ,\gamma'\rangle_\varepsilon &=\langle R(\gamma',Y)Y, \gamma'\rangle_\varepsilon -\| T(Y,\gamma')\|^2_\varepsilon \\
 &=\langle R(\gamma_\Ho',Y_\Ho)Y_\Ho , \gamma_\Ho'\rangle_\Ho -\| T(Y,\gamma')\|^2_\varepsilon \\
 &\ge - \| T(Y,\gamma')\|^2_\varepsilon.
\end{align*}
Therefore we have
\[
\nabla_\Ho^2 r_\varepsilon (X,X)  \le  \int_0^{r_\varepsilon} \left(\langle \nabla^\varepsilon_{\gamma'}  Y, \hat{\nabla}^\varepsilon_{\gamma'} Y  \rangle_\varepsilon +\| T(Y,\gamma')\|^2_\varepsilon \right).
\]
A lengthy but routine computation yields
\begin{align*}
   \int_0^{r_\varepsilon} & \left( \langle \nabla^\varepsilon_{\gamma'}  Y, \hat{\nabla}^\varepsilon_{\gamma'} Y  \rangle_\varepsilon +\| T(Y,\gamma')\|^2_\varepsilon \right) dt \\
  &= \frac{1}{r_\varepsilon}  +\frac{ \langle   X, \nabla_\Ho r_\varepsilon \rangle^2_\Ho}{r_\ve}+\left( \frac{r_\varepsilon}{\varepsilon} +\frac{r_\varepsilon^3}{12\varepsilon^2}  \| \nabla_\Ho r_\varepsilon \|_\Ho^2\right) \| Z \|_\varepsilon^2\\
&\quad+\frac{1}{2\varepsilon} \| T(X,\gamma')\|^2_\varepsilon+ \left(  \frac{r_\varepsilon}{\varepsilon} +\frac{r_\varepsilon^3}{12\varepsilon^2}  \| \nabla_\Ho r_\varepsilon \|_\Ho^2 \right)  \langle Z, T(\gamma',X) \rangle_\varepsilon.
\end{align*}
Using the fact that
\[
Z=\frac{1}{2} \frac{T(X,\gamma')}{1+ \frac{r_\varepsilon^2 \| \gamma'\|_\Ho^2}{12 \varepsilon}},
\]
one gets 
\[
\int_0^{r_\varepsilon} \langle \nabla^\varepsilon_{\gamma'}  Y, \hat{\nabla}^\varepsilon_{\gamma'} Y  \rangle_\varepsilon +\| T(Y,\gamma')\|^2_\varepsilon=\frac{1}{r_\varepsilon}  +\frac{ \langle   X, \nabla_\Ho r_\varepsilon \rangle^2_\Ho}{r_\ve}+\frac{1}{4 \varepsilon} \frac{ r_\varepsilon \| T(X,\nabla_\Ho r_\ve) \|^2_\V}{1+ \frac{\| \nabla_\Ho r_\varepsilon \|^2 r_\varepsilon^2}{12 \varepsilon}}.
\]
The proof is then complete.
\end{proof}

Observe that we always have
\[
 \frac{1}{r_\varepsilon} +\frac{ \langle   X, \nabla_\Ho r_\varepsilon \rangle^2_\Ho}{r_\ve} + \frac{1}{4 \varepsilon} \frac{ r_\varepsilon \| T(X,\nabla_\Ho r_\ve) \|^2_\V}{1+ \frac{\| \nabla_\Ho r_\varepsilon \|^2 r_\varepsilon^2}{12 \varepsilon}} \le \frac{4}{r_\ve}
\]
and therefore
\[
\nabla_\Ho^2 r_\varepsilon (X,X) \le \frac{4}{r_\ve}.
\]

We conclude with the following (non-optimal) sub-Riemannian Hessian comparison theorem.

\begin{theorem}[Sub-Riemannian Hessian comparison theorem]\label{Hessian general2}
Let $(\M,\mathcal{F},g)$ be a Sasak\-ian foliation.   Assume that the horizontal sectional curvature of the Bott connection is non-negative. Let $X \in \Gamma^\infty (\Ho)$ be a smooth vector field such that $\| X \|_\Ho=1$. For   $x \in \M \setminus \mathbf{Cut}_0 (x_0) $, one has 
\[
\nabla_\Ho^2 r_0 (X,X) \le \frac{4}{r_0} .
\]
\end{theorem}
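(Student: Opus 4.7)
The plan is to let $\ve\downarrow 0$ in Theorem~\ref{Hessian comparison}, whose right-hand side has already been observed to be at most $4/r_\ve$.

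First I would justify that uniform bound. Since the foliation is Sasakian, $T(X,Y)=\langle \mathbf{J}X,Y\rangle_\Ho S$, hence $\|T(X,\nabla_\Ho r_\ve)\|_\V^2=\langle \mathbf{J}X,\nabla_\Ho r_\ve\rangle_\Ho^2$. With $\|X\|_\Ho=1$ and $X\perp\mathbf{J}X$, Pythagoras gives
\[
\langle X,\nabla_\Ho r_\ve\rangle_\Ho^2+\|T(X,\nabla_\Ho r_\ve)\|_\V^2\le \|\nabla_\Ho r_\ve\|_\Ho^2\le 1.
\]
Setting $c=\langle X,\nabla_\Ho r_\ve\rangle_\Ho^2$, $a=\|\nabla_\Ho r_\ve\|_\Ho^2$, and $u=r_\ve^2/(12\ve)$, the right-hand side of Theorem~\ref{Hessian comparison} multiplied by $r_\ve$ is dominated by $1+c+3u(a-c)/(1+au)$. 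The last term is increasing in $u$ with supremum $3(a-c)/a$, so the total is at most $3+c(1-3/a)\le 3$ (using $a\le 1$), which yields $\nabla_\Ho^2 r_\ve(X,X)\le 4/r_\ve$ at every point outside $\mathbf{Cut}_\ve(x_0)$.

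Next I would fix $x\in \M\setminus \mathbf{Cut}_0(x_0)$ and argue that $x\notin \mathbf{Cut}_\ve(x_0)$ for all sufficiently small $\ve>0$, with $r_\ve\to r_0$ in $C^2$ on a neighborhood of $x$. Because the Sasakian structure is fat and every minimizing sub-Riemannian geodesic is normal, $r_0$ is smooth near $x$. The geodesic equation $\nabla_{\gamma'}\gamma'+\frac{1}{\ve}J_{\gamma'}\gamma'=0$ of Section~\ref{sec:RFoliation} corresponds on the cotangent bundle to a Hamiltonian depending smoothly on $\ve\in[0,\infty)$, so the $g_\ve$-exponential maps based at $x_0$ vary smoothly with $\ve$; combined with the openness of the conditions ``no conjugate points along the minimizer'' and ``unique minimizer'', this yields the claimed smooth convergence.

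Applying Theorem~\ref{Hessian comparison} at $x$ for such $\ve$ gives $\nabla_\Ho^2 r_\ve(X,X)(x)\le 4/r_\ve(x)$; passing to the limit $\ve\downarrow 0$ with the $C^2$-convergence, together with $r_\ve(x)\to r_0(x)>0$, yields the desired inequality. The main obstacle is precisely this last convergence step: controlling the $g_\ve$-cut-loci uniformly near $x$ as $\ve\to 0$, for which the regularity information on $r_0$ provided by the measure contraction property (Theorem~\ref{th:MCP}) and the fatness of the Sasakian structure are essential.
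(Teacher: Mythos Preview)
Your approach diverges from the paper's at the crucial step, and the gap you flag at the end is real and unresolved.

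A minor slip first: in your bound on the right-hand side of Theorem~\ref{Hessian comparison}, the supremum over $u$ gives $1+c+3(a-c)/a = 4 + c(1-3/a)$, not $3+c(1-3/a)$; since $a\le 1$ forces $1-3/a\le -2$, the total is $\le 4$, and your conclusion $\nabla_\Ho^2 r_\ve(X,X)\le 4/r_\ve$ is correct. This is harmless and is already recorded in the paper.

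The substantive issue is Step~2. You assert that $x\notin\mathbf{Cut}_0(x_0)$ implies $x\notin\mathbf{Cut}_\ve(x_0)$ for all small $\ve$, with $r_\ve\to r_0$ in $C^2$ near $x$. The paper explicitly singles this out as the obstacle: ``we have no topological information about the set $\mathbf{Cut}_0(x_0)\cup_{n\ge 1}\mathbf{Cut}_{1/n}(x_0)$, thus taking pointwise limits is made difficult.'' Your sketch (smooth dependence of the Hamiltonian $H_\ve$, openness of non-conjugacy and of unique-minimization) is not a proof. In particular, ``unique minimizer'' is not obviously open in $\ve$: a hypothetical second $g_\ve$-minimizer has initial covector $p$ constrained only by $\|p|_\Ho\|^2+\ve\|p|_\V\|^2=d_\ve(x_0,x)^2$, so its vertical component may blow up as $\ve\to 0$, and the compactness needed to extract a limiting sub-Riemannian minimizer (and derive a contradiction) is missing. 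You invoke Theorem~\ref{th:MCP} and fatness as ``essential'' but do not say how; measure contraction controls measures of images under the contraction map, not pointwise $C^2$ convergence of $r_\ve$ or inclusion relations between cut-loci.

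The paper bypasses the pointwise limit entirely. It picks a short sub-Riemannian geodesic $\gamma$ issuing from $x$ in the direction $v$, uses Theorem~\ref{th:MCP} to build a dynamic coupling $\Pi$ supported on nearby sub-Riemannian geodesics $\sigma$ with $(e_t)_\sharp\Pi\ll\mu$, and then applies the bound $\nabla^2 r_{\ve_n}(\dot\sigma_t,\dot\sigma_t)\le 4/r_{\ve_n}(\sigma_t)$ at Lebesgue-a.e.\ $t$ for $\Pi$-a.e.\ $\sigma$ (Fubini plus $\mu(\mathbf{Cut}_{\ve_n}(x_0))=0$). Integrating against test functions $h\in C_0^\infty((0,1))$, letting $\ve_n\to 0$, and using a Green's-function argument yields concavity of $t\mapsto\int r_0(\sigma_t)\,\Pi(d\sigma)+\int_0^1 g(t,s)\bar r(s)\,ds$, hence a second-difference inequality for $r_0$ along $\gamma$; the Hessian bound at $x$ follows by letting the geodesic shrink. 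The role of MCP here is solely to guarantee absolute continuity of $(e_t)_\sharp\Pi$, which is what makes the a.e.\ Hessian bound usable along sub-Riemannian geodesics; it is not used to control cut-loci as your proposal suggests.
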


\begin{proof}
A difficulty in the proof is that we have no topological information about the set $\mathbf{Cut}_0 (x_0) \cup_{n \ge 1} \mathbf{Cut}_{1/n} (x_0)$, thus taking pointwise limits is made difficult. It is however possible to bypass this difficulty by using optimal transportation tools.

 Let $x \in \M \setminus \mathbf{Cut}_0 (x_0)$ and $v \in \Ho_x$ with $\|v\|=1$. From Lemma \ref{cutlocus}, we know that $ \M \setminus \mathbf{Cut}_0 (x_0)$ is an open set, so there exists an open set $U$  containing $x_0$ so that $U \subset   \M \setminus \mathbf{Cut}_0 (x_0)$. Then, there exists at least one minimal  sub-Riemannian geodesic $\gamma : [ 0 , 1 ] \to \M$ 
such that $\gamma_{0} = x$ and $\dot{\gamma}_{0} = v $. We can assume that $\gamma$ is included in $U$.

We denote $z := \gamma_1$ and $y := \gamma_{1/2}$. 
Let $( \ve_n )_{n \in \mathbb{N}}$ be 
a decreasing sequence with $\lim_{n \to \infty} \ve_n = 0$. 
For a sufficiently small $\delta > 0$, 
let $A : = B_0 ( z, \delta )$ and apply Theorem~\ref{th:MCP} (2) 
to this choice of $A$. 
Then we can easily see that $( e_t )_\sharp \Pi \le C \mu$ 
for some constant $C = C (t,\delta) > 0$ for each $t$. 
Since $r_{\ve_n}$ is smooth a.e.~for each $n \in \mathbb{N}$, 
Fubini's theorem implies that, 
for $\Pi$-a.e.~sub-Riemannian-minimal geodesics $\sigma$, 
$r_{\ve_n}$ is twice differentiable at $\sigma_t$ 
for each~$n\in\mathbb{N}$.

Then, for each $h \in C^\infty_0 ( ( 0, 1 ) )$ with $h \ge 0$, 
we have 
\begin{align*}
\int_0^1 h'' (t) r_{\ve_n} ( \sigma_t ) \, dt 
& = 
\lim_{\eta \downarrow 0} 
\int_0^1 
\frac{ h (t + \eta) + h (t - \eta) - 2 h (t) }
{\eta^2} r_{\ve_n} ( \sigma_t ) \, dt 
\\
& = 
\lim_{\eta \downarrow 0} 
\int_0^1 
h (t) \frac{ r_{\ve_n} (t + \eta) + r_{\ve_n} (t - \eta) - 2 r_{\ve_n} (t) }
{\eta^2} \, dt 
\\
& = 
\int_0^1 
h (t) \nabla^2 r_{\ve_n} ( \dot{\sigma}_t, \dot{\sigma}_t )
\, dt 
\\
& \le  4
\int_0^1 
\frac{h (t)}{r_{\ve_n} (\sigma_t)} 
\, dt. 
\end{align*}
Now we take $n \to \infty$ in the last inequality, after integration by $\Pi$.  Thus we obtain 
\begin{align*}
\int_{\mathrm{Geo}_0(\M)} 
\int_0^1 h'' (t) r_0 ( \sigma_t ) \, dt 
\Pi ( d \sigma )
& \le  4
\int_{\mathrm{Geo}_0(\M)} 
\int_0^1\frac{h (t)}{r_{0} (\sigma_t)} 
\, dt 
\Pi ( d \sigma ).
\end{align*}
Let 
\begin{align*}
\bar{r} (t) := 4
\int_{\mathrm{Geo}_0(\M)} 
\frac{1}{r_0(\sigma_t)}
\Pi ( d \sigma ). 
\end{align*}
Let $g (t,s) := \min \{ s (1-t) , t (1-s) \}$ be 
the Green function of $- d^2/ds^2$ on $[0,1]$ 
with the Dirichlet boundary condition. 
Then we have 
\[
\int_0^1 h'' (t) \left( \int_0^1 g (t,s) \bar{r} (s) \, d s \right) d t  
= 
- \int_0^1 h (t) \bar{r} (t) \, d t, 
\]
and hence 
\[
\int_0^1 h''(t) 
\left(
    \int_{\mathrm{Geo}_0(\M)} r_0 ( \sigma_t ) \, \Pi ( d \sigma )
    + \int_0^1 g(t,s) \bar{r} (s) \, ds 
\right)
\, d t \le 0 . 
\]
Thus the distributional characterization of convex functions 
(see \cite[Theorem~1.29]{Simon} for instance), yields that 
$$\ds \int_{\mathrm{Geo}_0(\M)} r_0 ( \sigma_t ) \, \Pi ( d \sigma ) 
+ \int_0^1 g(t,s) \bar{r} (s) \, d s$$
is concave since it is continuous in $t$. 
Thus we have
\begin{align*}
 & \frac12 \int_{\mathrm{Geo}_0(\M)} r_0 ( \sigma_0 ) \, \Pi ( d \sigma ) 
+ 
\frac12 \int_{\mathrm{Geo}_0(\M)} r_0 ( \sigma_1 ) \, \Pi ( d \sigma ) 
\\
&\le 
\int_{\mathrm{Geo}_0(\M)} r_0 ( \sigma_{1/2} ) \, \Pi ( d \sigma ) 
+ 
\int_0^1 g( \frac12 , s ) \bar{r} (s) \, d s. 
\end{align*}
Hence, by letting $\delta \downarrow 0$, and using the proof of Theorem~\ref{th:MCP} (2) , we obtain
\begin{align*}
\frac12 r_0 ( x ) 
+ 
\frac12 r_0 ( z ) 
- 
r_0 ( y )
\le 
4 \int_0^1 g( \frac12 , s )\frac{1}{r_0(\sigma_t)}
\, d s. 
\end{align*}
Then the conclusion follows by dividing the last inequality by $d_0 (x,y)^2$ and 
letting $d_0(x,y) = d_0(x,z) \to 0$. 
\end{proof}

\section{Appendix 1: Second variation formulas and index forms}\label{appendix}

In this appendix, for the sake of reference, we collect without proofs several formulas used in the text. The main point is that the classical theory of second variations and Jacobi fields (see \cite{Chavel}) can be reformulated by using a connection which  is not necessarily the Levi-Civita connection. To make the formulas and computations as straightforward and elegant as for the Levi-Civita connection, the only requirement is that we have to work with a metric connection whose adjoint is also metric.

\

Let $(\M,g)$ be a complete Riemannian manifold and $\nabla$ be an affine metric connection on~$\M$. We denote by $\hat{\nabla}$ the adjoint connection of $\nabla$ given by
\[
\hat{\nabla}_X Y=\nabla_X Y -T(X,Y),
\]
where $T$ is the torsion tensor of $\nabla$. We will assume that $\hat{\nabla}$ is a metric connection. This is obviously equivalent to the fact that for every smooth vector fields $X,Y,Z$ on $\M$, one has
\begin{align}\label{skew torsion}
\langle T(X,Y),Z \rangle=-\langle T(X,Z),Y \rangle.
\end{align}
Observe that the connection $(\nabla+\hat{\nabla})/2$ is torsion free and metric, it is therefore the Levi-Civita connection of the metric $g$. Let $\gamma:[0,T] \to \M$ be a smooth path on $\M$. The energy of $\gamma$ is defined as
\[
E (\gamma)=\frac{1}{2}\int_0^T \| \gamma'(t)\|^2 \, dt.
\]

\

Let now $X$ be a smooth vector field on $\gamma$ with vanishing endpoints. One considers the variation of curves $\gamma(s,t)=\exp_{\gamma(t)} ^{\nabla} (s X(\gamma(t)))$ where $\exp^{\nabla}$ is the exponential map of the connection $\nabla$. The first variation of the energy $E(\gamma)$ is given by the formula:
\begin{align*}
\int_0^T \langle \gamma'  , \nabla_\gamma' X + T(X,\gamma') \rangle \, dt
 &=\int_0^T \langle \gamma'  , \hat{\nabla}_\gamma' X  \rangle \, dt
 =-\int_0^T \langle \hat{\nabla}_\gamma' \gamma'  ,  X  \rangle \, dt.
\end{align*}
As a consequence, the critical curves of $E$ are the geodesics of the adjoint connection $\hat{\nabla}$:
\[
\hat{\nabla}_\gamma' \gamma'=0.
\]
These critical curves are also geodesics for $\nabla$ and for the Levi-Civita connection and thus distance minimizing if the endpoints are not in the cut-locus.
One can also compute the second variation of the energy at a geodesic  $\gamma$ and standard computations yield
\begin{align}\label{index general}
\int_0^T \left( \langle \nabla_{\gamma'}  X, \hat{\nabla}_{\gamma'} X  \rangle-\langle \hat{R}(\gamma',X)X,\gamma'\rangle \right) dt
\end{align}
where $\hat R$ is the Riemann curvature tensor of $\hat \nabla$. This is the formula for the second variation with fixed endpoints. This formula  does not depend on the choice of connection $\nabla$. 

\

The index form of a vector field $X$ (with not necessarily vanishing endpoints) along a geodesic $\gamma$ is given by
\begin{align*}
I (\gamma,X,X): & =\int_0^T \left(\langle \nabla_{\gamma'}  X, \hat{\nabla}_{\gamma'} X  \rangle-\langle \hat{R}(\gamma',X)X,\gamma'\rangle \right) dt \\
 &= \int_0^T \left( \langle \nabla_{\gamma'}  X, \hat{\nabla}_{\gamma'} X  \rangle-\langle R(\gamma',X)X,\gamma'\rangle \right) dt.
\end{align*}

\

If $Y$ is a Jacobi field along the geodesic $\gamma$, one has
\[
\hat{\nabla}_{\gamma'} \nabla_{\gamma'} Y =\hat{\nabla}_{\gamma'} \hat{\nabla}_Y \gamma'=\hat{R} (\gamma',Y)\gamma'
\] 
because $\hat{\nabla}_{\gamma'} \gamma'=0$. The Jacobi equation therefore writes
\begin{align}\label{Jacobi}
\hat{\nabla}_{\gamma'} \nabla_{\gamma'} Y=\hat{R} (\gamma',Y)\gamma'.
\end{align}

We have then the following results:
\begin{lemma}\label{lemma hessian}
Let $x_0 \in \M$ and $x \neq x_0$ which is not in the cut-locus of $x$. We denote by $r=d(x_0,\cdot)$ the distance function from $x_0$. Let $X \in T_x \M$ be orthogonal to $\nabla r (x)$. At the point $x$, we have
\begin{align*}
\nabla^2 r (X,X)=I (\gamma,Y,Y)
\end{align*}
where $\gamma$ is the unique length parametrized geodesic connecting $x_0$ to $x$ and $Y$ the Jacobi field along $\gamma$ such that $Y(0)=0$ and $Y(r(x))=X$.
\end{lemma}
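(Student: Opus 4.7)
The plan is to adapt the classical Riemannian argument: relate the Hessian of $r$ to the second variation of the energy of a one-parameter variation of $\gamma$ through geodesics, then identify this second variation with the index form of the resulting Jacobi field. The fact that both $\nabla$ and its adjoint $\hat{\nabla}$ are metric is what makes the bookkeeping go through.

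First, I would fix a $\nabla$-geodesic $c\colon(-\epsilon,\epsilon)\to\M$ with $c(0)=x$ and $c'(0)=X$; since $\nabla$- and $\hat\nabla$-geodesics coincide, $c$ is also a $\hat\nabla$-geodesic. For $s$ sufficiently small $c(s)\notin\Cut(x_0)$, so there is a unique minimizing geodesic $\sigma(\cdot,s)\colon[0,r(x)]\to\M$ from $x_0$ to $c(s)$, parametrized proportionally to arc length with $\sigma(\cdot,0)=\gamma$. The variation field $Y(t):=\partial_s\sigma(t,0)$ is then a Jacobi field along $\gamma$ satisfying $Y(0)=0$ and $Y(r(x))=c'(0)=X$.

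Second, extending $X$ by parallel transport along $c$ gives $\nabla_X X|_x=0$, so $\nabla^2 r(X,X)=\ds\frac{d^2}{ds^2}\big|_{s=0}r(c(s))$. Since each $\sigma(\cdot,s)$ is a constant-speed minimizing geodesic on $[0,r(x)]$, one has $r(c(s))^2=2r(x)E(\sigma(\cdot,s))$. Differentiating twice in $s$ and using the orthogonality $\langle X,\nabla r(x)\rangle=0$ to kill the first-order cross term yields $\nabla^2 r(X,X)=E''(0)$.

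Third, I would compute $E''(0)$ as a pure boundary term. Because every $\sigma(\cdot,s)$ is a geodesic and $\sigma(0,s)\equiv x_0$, the first variation reduces to $E'(s)=\langle\sigma_t,\sigma_s\rangle|_{t=r(x)}$. Differentiating again at $s=0$, using that $\hat\nabla$ is metric, that the endpoint contribution $\langle\gamma'(r(x)),\hat\nabla_{\partial_s}\sigma_s\rangle$ vanishes (since $c$ is a $\hat\nabla$-geodesic), and invoking the torsion identity $\hat\nabla_{\partial_s}\sigma_t=\nabla_{\gamma'}Y$ at $(r(x),0)$ — a direct consequence of the structural relations $\hat T=-T$ and $\nabla=\hat\nabla+T$ — gives $E''(0)=\langle\nabla_{\gamma'}Y(r(x)),X\rangle$.

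Finally, identify this boundary term with the index form. Since $\hat\nabla$ is metric, $\frac{d}{dt}\langle Y,\nabla_{\gamma'}Y\rangle=\langle\hat\nabla_{\gamma'}Y,\nabla_{\gamma'}Y\rangle+\langle Y,\hat\nabla_{\gamma'}\nabla_{\gamma'}Y\rangle$. Replacing the last factor via the Jacobi equation \eqref{Jacobi}, using the skew-symmetry of $\hat R(\gamma',Y)$ as an endomorphism (which holds because $\hat\nabla$ is metric), and integrating from $0$ to $r(x)$ with $Y(0)=0$ yields $\langle X,\nabla_{\gamma'}Y(r(x))\rangle=\int_0^{r(x)}(\langle\nabla_{\gamma'}Y,\hat\nabla_{\gamma'}Y\rangle-\langle\hat R(\gamma',Y)Y,\gamma'\rangle)\,dt=I(\gamma,Y,Y)$, where the final identification uses the equivalence $\langle\hat R(\gamma',Y)Y,\gamma'\rangle=\langle R(\gamma',Y)Y,\gamma'\rangle$ recorded in the appendix. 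The main technical hurdle is the torsion bookkeeping in the third step: unlike the Levi-Civita case, one has to apply $\nabla$ and $\hat\nabla$ in precisely the right places, and it is essential that both connections be metric so that the relevant Leibniz rules for inner products hold and the boundary terms telescope cleanly.
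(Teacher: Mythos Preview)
Your proof is correct. The paper does not supply its own proof of this lemma---Appendix~1 explicitly states that it collects these formulas ``without proofs''---so there is nothing to compare against; your argument is the standard Riemannian one, correctly adapted to a metric connection with metric adjoint, and the torsion bookkeeping (in particular the identity $\hat\nabla_{\sigma_s}\sigma_t=\nabla_{\sigma_t}\sigma_s$ for coordinate vector fields and the vanishing of $\langle T(\sigma_t,\sigma_s),\sigma_t\rangle$ from \eqref{skew torsion}) is handled cleanly.
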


Combining this with the index lemma  yields:

\begin{lemma}\label{index lemma}
Let $x_0 \in \M$ and $x \neq x_0$ which is not in the cut-locus of $x$. Let $X \in T_x \M$. At $x$, we have
\begin{align*}
\nabla^2 r (X,X) \le  \int_0^{r} \left(\langle \nabla_{\gamma'}  \tilde X, \hat{\nabla}_{\gamma'} \tilde X  \rangle-\langle \hat{R}(\gamma',\tilde X) \tilde X,\gamma'  \rangle \right) dt
\end{align*}
where $\gamma$ is the unique length parametrized geodesic connecting $x_0$ to $x$ and  $\tilde X$ is any vector field along $\gamma$ such that $\tilde X(0) =0$ and $\tilde X(r(x))=X$.
\end{lemma}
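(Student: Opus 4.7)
The plan is to establish the bound by an energy-versus-distance comparison rather than by a polarized-index-form / integration by parts argument; the latter is delicate in the presence of torsion because the natural bilinear pairing in $I(\gamma,\cdot,\cdot)$ is not symmetric. Let $\gamma:[0,r]\to\M$ denote the unit-speed $g$-geodesic from $x_0$ to $x$, with $r=r(x)$. Given any vector field $\tilde X$ along $\gamma$ with $\tilde X(0)=0$ and $\tilde X(r)=X$, I consider the two-parameter variation
\[
\gamma_s(t):=\exp^{\hat{\nabla}}_{\gamma(t)}\!\bigl(s\,\tilde X(t)\bigr),\qquad s\in(-\varepsilon_0,\varepsilon_0),\ t\in[0,r],
\]
built from the adjoint exponential. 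Since $\tilde X(0)=0$, each curve $\gamma_s$ starts at $x_0$, while its right endpoint traces the $g$-geodesic $\alpha(s):=\gamma_s(r)=\exp^{\hat{\nabla}}_x(sX)$ emanating from $x$ with velocity $X$.

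By the Cauchy--Schwarz inequality and the definition of Riemannian distance,
\[
E(\gamma_s)\;\ge\;\frac{L(\gamma_s)^2}{2r}\;\ge\;\frac{d(x_0,\alpha(s))^2}{2r}\;=\;\frac{r(\alpha(s))^2}{2r},
\]
with equality at $s=0$ because $\gamma_0=\gamma$ is unit-speed and minimizing. Hence the nonnegative function $f(s):=E(\gamma_s)-r(\alpha(s))^2/(2r)$ attains a minimum at $s=0$, so $f''(0)\ge 0$.

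The heart of the argument is to identify $d^2E/ds^2|_{s=0}$ with the index-form integrand. Write $W:=\partial_s\gamma_s$, so $W|_{s=0}=\tilde X$. The choice of variation makes each slice $s\mapsto\gamma_s(t)$ a $\hat{\nabla}$-geodesic, which gives $\hat{\nabla}_WW\equiv 0$ on a neighborhood of $s=0$. Using $[W,\gamma_s']=0$ and the identity $\hat{\nabla}_W\gamma_s'-\hat{\nabla}_{\gamma_s'}W=\hat T(W,\gamma_s')=-T(W,\gamma_s')$, together with the torsion-skew-symmetry \eqref{skew torsion} (which kills $\langle T(W,\gamma_s'),\gamma_s'\rangle$), the first variation reduces to $dE/ds=\int_0^r\langle\hat{\nabla}_{\gamma_s'}W,\gamma_s'\rangle\,dt$. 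Differentiating again and using that $\hat\nabla$ is metric, the relation $\hat{\nabla}_W\hat{\nabla}_{\gamma_s'}W=\hat{\nabla}_{\gamma_s'}\hat{\nabla}_WW+\hat R(W,\gamma_s')W$, the vanishing $\hat{\nabla}_WW=0$, and the evaluation $\hat{\nabla}_W\gamma_s'|_{s=0}=\nabla_{\gamma'}\tilde X$, yields
\[
\frac{d^2E}{ds^2}\bigg|_{0}=\int_0^r\Bigl(\langle\nabla_{\gamma'}\tilde X,\hat{\nabla}_{\gamma'}\tilde X\rangle-\langle\hat R(\gamma',\tilde X)\tilde X,\gamma'\rangle\Bigr)dt\;=\;I(\gamma,\tilde X,\tilde X),
\]
with no boundary terms precisely because $\hat{\nabla}_WW\equiv 0$.

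For the distance piece, $\alpha$ being a $\hat{\nabla}$-geodesic gives $\tfrac{d^2}{ds^2}\big|_{0}r(\alpha(s))=\hat{\nabla}^2 r(X,X)=\nabla^2 r(X,X)$; the second equality uses $T(X,X)=0$, which forces the diagonal Hessians of $\nabla$ and $\hat{\nabla}$ to agree. A short chain-rule computation then gives $\tfrac{d^2}{ds^2}\big|_{0}\tfrac{r(\alpha(s))^2}{2r}=\nabla^2 r(X,X)+(Xr)^2/r$, and $f''(0)\ge 0$ rearranges to
\[
\nabla^2 r(X,X)\;\le\;I(\gamma,\tilde X,\tilde X)-\frac{(Xr)^2}{r}\;\le\;I(\gamma,\tilde X,\tilde X),
\]
which is the Index Lemma (with an extra slack $(Xr)^2/r\ge 0$ when $X$ is not orthogonal to $\nabla r$). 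The main technical hurdle I expect is the second-variation computation: one must verify that the combination of (a) the metric property of $\hat{\nabla}$, (b) the torsion-skew-symmetry \eqref{skew torsion}, and (c) the identity $\hat{\nabla}_WW\equiv 0$ for this specific variation conspire exactly to cancel all torsion residues and boundary terms, leaving $I(\gamma,\tilde X,\tilde X)$ on the nose.
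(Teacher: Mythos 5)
Your argument is correct, and it is genuinely different from the route the paper takes. The paper (Appendix 1) obtains Lemma \ref{index lemma} by combining the exact identity $\nabla^2 r(X,X)=I(\gamma,Y,Y)$ for the Jacobi field $Y$ (Lemma \ref{lemma hessian}) with the classical index lemma, i.e.\ the minimizing property of Jacobi fields for the index form. You instead run a direct support-function argument: $E(\gamma_s)\ge d(x_0,\alpha(s))^2/(2r)$ with equality at $s=0$, so the second $s$-derivative of the difference is nonnegative, and the free-endpoint second variation of the energy for the variation $\exp^{\hat\nabla}_{\gamma(t)}(s\tilde X(t))$ produces exactly $I(\gamma,\tilde X,\tilde X)$ with no boundary term because $\hat\nabla_W W\equiv 0$ (and hence $\nabla^{LC}_WW=0$, as $T$ is antisymmetric; note also that $\nabla$, $\hat\nabla$ and the Levi--Civita connection all have the same geodesics, so the choice of exponential map here is immaterial). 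I checked the key computation: $\hat\nabla_W\gamma_s'=\hat\nabla_{\gamma_s'}W+\hat T(W,\gamma_s')=\nabla_{\gamma_s'}W$, the torsion term in $dE/ds$ dies by \eqref{skew torsion}, and $\hat\nabla_W\hat\nabla_{\gamma_s'}W=\hat R(W,\gamma_s')W$, so the integrand is exactly the paper's index form. Your approach buys several things: it is self-contained (no need to prove that Jacobi fields minimize $I$, which is slightly delicate here since the bilinear pairing $\langle\nabla_{\gamma'}\cdot,\hat\nabla_{\gamma'}\cdot\rangle-\langle\hat R(\gamma',\cdot)\cdot,\gamma'\rangle$ is not manifestly symmetric); it covers arbitrary $X\in T_x\M$ rather than only $X\perp\nabla r$ as in Lemma \ref{lemma hessian}; and it yields the slightly sharper bound $\nabla^2 r(X,X)\le I(\gamma,\tilde X,\tilde X)-(Xr)^2/r$. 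What it does not give is the equality statement of Lemma \ref{lemma hessian}, which the paper needs elsewhere (the explicit Jacobi fields of Appendix 2 drive the sharp Sasakian comparison theorems), so your proof replaces the index-lemma step but not the Jacobi-field machinery of the paper.
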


\section{Appendix 2: Jacobi fields on Sasakian manifolds of constant sectional curvature}

In this Appendix, we compute the Jacobi fields of the metric $g_\ve$ on Sasakian manifolds of constant sectional curvature. An important difference with respect to \cite{BD} is that we work with the adjoint connection $\hat{\nabla}^\ve=\nabla+\frac{1}{\varepsilon} J$ instead of the Tanaka-Webster (= Bott) connection.
\

In this appendix, we assume that the Riemannian foliation on $\M$ is a Sasakian foliation. As in Section 3,  the Reeb vector field on $\M$ will be denoted by $S$ and the complex structure by $\mathbf{J}$. We refer for instance to Chapter 2 in \cite{Wang} for a discussion about Sasakian model spaces from the point of view of sub-Riemannian geometry.

\

We use the notations of Section~\ref{sec:HTypeComp}. For any vector field $Y$ along $\gamma$, we will use $Y'$ for the covariant derivative with respect to $\hat{\nabla}^\ve_{\gamma'}$. Whenever we use the word parallel, it is with respect to $\hat \nabla^\ve$. We identify vectors and their corresponding parallel vector field. We define $\phi_\mu$, $\psi_\mu$ and $\Phi_\mu$ as in Section~\ref{sec:HTypeComp}
\begin{lemma} \label{lemma:ExJacobi}
Let $R$ be the curvature of the Bott connection $\nabla$ and assume that for some $k \in \mathbb{R}$,
$$\langle R(v,w) w, v \rangle = k \|v \wedge w \|^2_{g_\Ho}, \qquad v,w \in \Ho.$$
Let $\gamma:[0,r] \to \M$ be a geodesic of unit speed with respect to $g_{\ve}$.
\begin{enumerate}[\rm (a)] 
\item Let $Y$ be the Jacobi vector field along $\gamma$ such that $Y(0) = 0$ and $Y(r) = v_0 \in \Ho_{\gamma(r)}$. Assume that $\gamma_\Ho' \neq 0$ and that $v_0$ is orthogonal to $\mathbf J\gamma'_\Ho$ and $\gamma'_\Ho$. Finally, if
$$\mu_\gamma = \frac{\| \gamma_{\Ho}'\|^2_g - 1}{4\varepsilon} - \| \gamma_\Ho'\|^2_g k <0,$$
assume that $\sqrt{-\mu_\gamma} r < \pi$. Then
\begin{equation} \label{Jac1} Y(t) =\frac{\phi_{\mu_\gamma}(t)}{\phi_{\mu_\gamma}(r)} \left(\cos \left( \frac{\langle S, \gamma' \rangle_{g_\ve} (r-t)}{2} \right) v_0- \sin \left( \frac{\langle S, \gamma' \rangle_{g_\ve} (r-t)}{2}\right)\mathbf{J} v_0\right).\end{equation}
\item Assume that $\gamma$ is a horizontal curve. Let $Y$ be the Jacobi vector field along $\gamma$ such that $Y(0) = 0$ and $Y(r) = \mathbf{J} \gamma_\Ho'(r) $. If $k >0$, assume that $\sqrt{k} r \leq \pi$. Then
\begin{equation} \label{Cve} C_\ve = \psi'_{-k}(r)^2 - \psi_{-k}(r) \psi_{-k}''(r) + \ve r \psi''_{-k}(r) >0,\end{equation} and $Y(t)$ is given by
\begin{align*} Y(t) &= \frac{1}{C_\ve} \Big( \psi_{-k}'(r) \psi_{-k}'(t) + (\ve r- \psi_{-k} (r)) \psi_{-k}''(t)   \Big)\mathbf{J} \gamma'(t) \\ 
& \quad + \frac{1}{C_\ve}\Big( \psi_{-k}'(r) \psi_{-k}(t) - \psi_{-k}(r) \psi_{-k}'(t) + \ve(r\psi_{-k}'(t) - t\psi_{-k}'(r))  \Big)S(t). \end{align*}
\item Let $\gamma: [0,r] \to \mathbb{M}$ be vertical with $r < 2 \pi \sqrt{\ve}$. Then any Jacobi field $Y$ with $Y(0) = 0$ and $Y(r) =v_0 \in \Ho_x$, is given by
\begin{align}  \nonumber
Y(t) &= \frac{1}{2 \left(1- \cos \frac{r}{\sqrt{\ve}} \right)} \left( \left( 1+  \cos \frac{r-t}{\sqrt{\ve}} - \cos \frac{r}{\sqrt{\ve}} - \cos \frac{t}{ \sqrt{\ve} }  \right) v_0 \right. \\ \label{Jac2}
& \left.  \qquad \qquad \qquad \qquad - \langle \sqrt{\ve}  S, \gamma' \rangle_{g_\ve} \left( \sin \frac{r-t}{\sqrt{\ve}} - \sin\frac{r}{\sqrt{\ve}} + \sin \frac{t}{ \sqrt{\ve} } \right) \mathbf{J} v_0 \right) 
\end{align}
In fact, this is a Jacobi-field along all vertical geodesics on a Sasakian manifold without any assumption on the curvature.
\end{enumerate}
\end{lemma}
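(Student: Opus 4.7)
The strategy is direct verification: for each of (a)--(c), substitute the explicit vector field into the Jacobi equation along a $g_\ve$-geodesic and check both the ODE and the boundary data. Appendix~1 reformulates the Jacobi equation using the adjoint connection, and in the Sasakian case the paper has recorded the simplification $\hat{R}^\ve(X,Y)Z = R(X,Y)Z + \tfrac{1}{\ve}\langle\mathbf{J}X_\Ho,Y_\Ho\rangle_\Ho\mathbf{J}Z$. Combined with $\nabla S=0$, $\nabla\mathbf{J}=0$, $T(X,Y)=\langle\mathbf{J}X,Y\rangle_\Ho S$, $T(S,\cdot)=0$, and the constant-horizontal-sectional-curvature hypothesis, the curvature side collapses: the first Bianchi identity for the Bott connection in the Sasakian case (where the torsion and $\nabla T$ contributions cancel by direct computation using $\nabla\mathbf{J}=0$) yields $R(X,Y)Z = k(\langle Y,Z\rangle X-\langle X,Z\rangle Y)$ on horizontal triples, while $R(\cdot,\cdot)S=0$ follows from $\nabla S=0$.

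For part (a), extend $v_0$ and $\mathbf{J}v_0$ to $\nabla$-parallel horizontal fields along $\gamma$; both remain in $\mathfrak{L}_J(\gamma'_\Ho)$ because $\nabla$ commutes with $\mathbf{J}$. The candidate $Y(t)=f(t)U_1(t)$ with $f(t)=\phi_{\mu_\gamma}(t)/\phi_{\mu_\gamma}(r)$ and $U_1(t)=\cos(\omega(r-t)/2)v_0-\sin(\omega(r-t)/2)\mathbf{J}v_0$, $\omega=\langle S,\gamma'\rangle_{g_\ve}$, takes values in the two-dimensional $\mathbf{J}$-invariant subspace $\mathrm{span}(v_0,\mathbf{J}v_0)\subset\mathfrak{L}_J(\gamma'_\Ho)$, so $\langle\mathbf{J}\gamma'_\Ho,Y_\Ho\rangle=0$ and the $\mathbf{J}$-twist term in $\hat{R}^\ve$ vanishes. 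Evaluating the Jacobi operator on $Y$ in the basis $(U_1,\mathbf{J}U_1)$ and invoking the speed identity $\|\gamma'_\Ho\|^2_g + \ve\omega^2 = 1$ reduces the Jacobi equation to the scalar ODE $f''=\mu_\gamma f$, which $\phi_{\mu_\gamma}$ solves by construction; the rotation rate $\omega/2$ built into $U_1$ is precisely what makes the cross-components cancel. Boundary conditions follow from $\phi_{\mu_\gamma}(0)=0$ and $U_1(r)=v_0$.

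For part (b), $\gamma$ is horizontal. Substituting $Y=\alpha(t)\mathbf{J}\gamma'+\beta(t)S$ into the Jacobi equation produces a coupled linear ODE system whose general solution is spanned by combinations built from $\psi'_{-k}$ and $\psi''_{-k}=\phi_{-k}$ (using $\phi''_{-k}=-k\phi_{-k}$). Matching $Y(0)=0$ and $Y(r)=\mathbf{J}\gamma'(r)$ reduces to a $2\times 2$ linear system in two coefficients whose determinant is exactly $C_\ve$, and Cramer's rule recovers the displayed formula. Positivity of $C_\ve$ is a separate analytic check: for $k=0$, $C_\ve=r^4/12+\ve r^2>0$ directly; for $k\ne 0$, the identity
$$\psi'_{-k}(r)^2-\psi_{-k}(r)\psi_{-k}''(r) = \tfrac{4}{k^2}\sin(\sqrt{k}r/2)\bigl(\sin(\sqrt{k}r/2)-(\sqrt{k}r/2)\cos(\sqrt{k}r/2)\bigr)$$
(and the analogous hyperbolic identity for $k<0$) is non-negative under the constraint $\sqrt{k}r\le\pi$ since $\tfrac{d}{dx}(\sin x - x\cos x)=x\sin x\ge 0$ on $[0,\pi]$, while the remaining summand $\ve r\psi''_{-k}(r)=\ve r\phi_{-k}(r)\ge 0$ completes the argument.

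For part (c), $\gamma$ is vertical so $\gamma'_\Ho=0$. The curvature contribution $R(\gamma',Y)\gamma'$ vanishes (from $R(S,\cdot)S=0$, itself an immediate consequence of $\nabla S=0$), and the $\mathbf{J}$-twist term in $\hat{R}^\ve$ vanishes automatically because $\gamma'_\Ho=0$, irrespective of any curvature hypothesis. The Jacobi equation reduces to a homogeneous linear equation on the $\mathbf{J}$-invariant horizontal plane through $v_0$, which in a natural frame becomes a constant-coefficient second-order ODE oscillating at frequency $1/\sqrt{\ve}$; matching $Y(0)=0$ and $Y(r)=v_0$ then yields the displayed formula, with non-degeneracy of the denominator $2(1-\cos(r/\sqrt{\ve}))$ ensured by the bound $r<2\pi\sqrt{\ve}$. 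The main obstacle is part (b): the horizontal–vertical coupling demands careful bookkeeping of the ODE system, and the positivity of $C_\ve$ must be handled by separate arguments across the three sign regimes of $k$.
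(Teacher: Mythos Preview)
Your proposal is correct and follows essentially the same approach as the paper: both reduce the Jacobi equation \eqref{Jacobi} to the coupled ODE system for the horizontal and vertical components, and your rotating-frame ansatz $Y=f(t)U_1(t)$ in (a) and (c) is exactly the real form of the paper's complexification $Z=X+i\mathbf{J}X$, yielding the same scalar equation $f''=\mu_\gamma f$. The only minor difference is in the verification of $C_\ve>0$, where you compute the trigonometric identity directly while the paper packages it via $\psi'_{-k}(r)^2-\psi_{-k}(r)\psi_{-k}''(r)=r\psi''_{-k}(r)\Psi_{-k}(r/2)$; both routes are valid and equivalent.
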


\begin{proof}
 The Jacobi equation for a vector field $Y$ is given by
\begin{align} \nonumber
0 & = \hat{\nabla}^\ve_{\gamma'} \nabla_{\gamma'}^\ve Y - \hat{R}^\ve(\gamma', Y) \gamma' \\ \label{RealJacobi}
& =  Y'' - T( \gamma', Y') - \frac{1}{\ve} J_{\gamma'} Y'+ \frac{1}{\ve} J_{Y'}  \gamma'  - R(\gamma', Y) \gamma' - \frac{1}{\ve} J_{T( \gamma',Y)}  \gamma'. 
\end{align}
Define $X = \pi_\Ho Y $ and $\langle Y, S\rangle_g  = F$. Equation \eqref{RealJacobi} then becomes
\begin{eqnarray*}
F'' &=& \langle \mathbf{J} \gamma'_\Ho, X' \rangle_g , \\
X'' &=& -k \| \gamma'_\Ho\|^2_g X  +  \langle S, \gamma' \rangle_{g_\ve} \mathbf{J} X' - \frac{1}{\ve} \left( F' - \langle \mathbf{J} \gamma'_\Ho, X\rangle_g \right) \mathbf{J} \gamma'_\Ho.
\end{eqnarray*}
Note that $C_0 = F' - \langle \mathbf{J} \gamma'_\Ho, X\rangle_g$ is constant and so $C_0 \mathbf{J} \gamma'_\Ho$ is a parallel vector field.
\begin{enumerate}[\rm (a)]
\item We assume that $X$ is contained in the orthogonal complement of $\mathbf{J}\gamma'_\Ho$ and $\gamma'_\Ho$ which is a parallel vector bundle along $\gamma$. Given this assumption and initial condition $F(0)=0$, we have $F = C_0 t$. From the condition $F(r) = 0$, we must have $C_0 = 0$, and so $X$ is a solution of
$$X'' = - k \| \gamma_\Ho'\|^2_g X + \langle S, \gamma' \rangle_{g_\ve} \mathbf{J} X'.$$
Define $Z = X + i\mathbf{J} X$. Then
$$Z'' + i \langle S, \gamma' \rangle_{g_\ve} Z' + \| \gamma_\Ho' \|^2_g k Z=0 .$$
The solution with initial condition $Z(0) = 0$ is
$$Z(t) = e^{-i \langle S, \gamma' \rangle_{g_\ve} t/2} \phi_{\mu_\gamma}(t) (X_0 + i \mathbf{J} X_0),$$
where $X_0$ is some parallel vector field and $$\mu_\gamma = - \frac{\langle S, \gamma' \rangle^2_{g_\ve}}{4} -  k  \| \gamma_\Ho' \|^2_g =  - \frac{1- \| \gamma'_\Ho\|^2_g}{4\varepsilon} -  k  \| \gamma_\Ho' \|^2_g .$$ Using that $Z(r) = v_0 + i \mathbf{J} v_0$
and taking the real part of $Z$, we get the result.
\item If $\gamma$ is horizontal, then we are left so solve
\begin{eqnarray*}
F'' &=& \langle \mathbf{J} \gamma', X' \rangle_g , \\
X'' &=& -k X - \frac{1}{\ve} C_0 \mathbf{J} \gamma'.
\end{eqnarray*}
Write $X = f \mathbf{J} \gamma'$. Then $C_0 = F' - f$ and
$$f'' + kf =-  \frac{1}{\ve} C_0 .$$
The solution, given the initial condition, is
$$f(t) = C_1\phi_{-k}(t) - \frac{C_0}{\ve} \int_0^t \phi_{-k}(s) ds = C_1 \psi_{-k}''(t) - \frac{C_0}{\ve} \psi_{-k}'(t),$$
for some constant $C_1$. This means that
$$F(t) = C_0\left( t - \frac{1}{\ve} \psi_{- k}(t)  \right) + C_1 \psi_{-k}'(t).$$
Then we need to solve the equations $C_1 \psi_{-k}''(r) - C_0 \frac{1}{\ve} \psi_{-k}'(r) =1$ and $C_0 ( r - \frac{1}{\ve} \psi_{-k}(r)) + C_1 \psi_{-k}' (r) = 0$. If $C_\ve$ is as in \eqref{Cve}, the solution is
$$C_0 =-  \frac{\ve \psi_{-k}'(r)}{C_\ve}, \qquad C_1 = \frac{\ve r - \psi_{-k}(r)}{C_\ve} \, .$$
To complete the proof, we need to show that the denominator is in fact non-zero. However, this follows from the observation that
$$\psi'_{\mu}(r)^2 - \psi_{\mu}(r)'' \psi_\mu(r) = r \psi_\mu''(r) \Psi_\mu(r/2),$$
so $C_\ve = r \psi''_\mu(r) (\Psi_\mu(r/2) + \ve) = r \phi_\mu(r) (\Psi_\mu(r/2) + \ve)$. 
\item Define $s =\langle \sqrt{\ve}  S, \gamma' \rangle_{g_\ve} \in \{ -1, 1\}$. Since $R(\gamma', \cdot) = 0$, when $\gamma$ is vertical, we need no assumptions on the curvature. The equation for a Jacobi vector field is now
\begin{eqnarray*}
F'' &=& 0 , \\
X'' &=&  \langle S, \gamma' \rangle_{g_\ve} \mathbf{J} X' = \frac{s}{\sqrt{\ve}} \mathbf{J} X'.
\end{eqnarray*}
Define $Z = X + i \mathbf{J} X$. With initial and final conditions, we have $F= 0$ and
$$Z(t) = \frac{ 1- e^{-i st/ \sqrt{\ve} } }{1- e^{-i s r /\sqrt{\ve} } } (v_0 + i \mathbf{J} v_0).$$
Taking the real part, the result follows.\qedhere
\end{enumerate}
\end{proof}

\end{document}